\pdfoutput=1
\documentclass[11pt, a4paper, oneside]{amsart}
\usepackage[english]{babel}
\usepackage[T1]{fontenc}
\usepackage{lmodern}

\usepackage[usenames,dvipsnames]{xcolor}
\usepackage{svg}

\usepackage[colorlinks=true,linkcolor=NavyBlue,urlcolor=RoyalBlue,citecolor=PineGreen,
	hypertexnames=false]{hyperref}

\usepackage{cleveref}

\usepackage{amsthm}
\usepackage{amsmath}
\usepackage{amssymb}
\usepackage{amsfonts}
\usepackage{mathtools}
\usepackage{stmaryrd}
\usepackage{bm}
\usepackage{mathbbol}
\usepackage{shuffle}

\usepackage{tikz}
\usepackage{tikz-cd} \usepackage{subfigure}
\usepackage{verbatim}
\usepackage[margin=0.7in]{geometry}

\usepackage[font=small]{caption}
\usepackage{adjustbox}

\usepackage{enumerate}



\usepackage[linecolor=white,backgroundcolor=white,bordercolor=white,textsize=tiny]{todonotes}

\makeatletter
\setcounter{tocdepth}{2}

\renewcommand{\tocsection}[3]{%
	\indentlabel{\@ifnotempty{#2}{\bfseries\ignorespaces#1 #2.\,\,}}\bfseries#3}
\renewcommand{\tocsubsection}[3]{%
	\indentlabel{\@ifnotempty{#2}{\ignorespaces#1 #2\quad}}#3}
\renewcommand{\tocsubsubsection}[3]{%
	\quad\quad\quad\indentlabel{\@ifnotempty{#2}{\ignorespaces#1 #2\quad}}#3}

\newcommand\@dotsep{4.5}
\def\@tocline#1#2#3#4#5#6#7{\relax
	\ifnum #1>\c@tocdepth 
	\else
		\par \addpenalty\@secpenalty\addvspace{#2}%
		\begingroup \hyphenpenalty\@M
		\@ifempty{#4}{%
			\@tempdima\csname r@tocindent\number#1\endcsname\relax
		}{%
			\@tempdima#4\relax
		}%
		\parindent\z@ \leftskip#3\relax \advance\leftskip\@tempdima\relax
		\rightskip\@pnumwidth plus1em \parfillskip-\@pnumwidth
		#5\leavevmode\hskip-\@tempdima{#6}\nobreak
		\leaders\hbox{$\m@th\mkern \@dotsep mu\hbox{.}\mkern \@dotsep mu$}\hfill
		\nobreak
		\hbox to\@pnumwidth{\@tocpagenum{\ifnum#1=1\bfseries\fi#7}}\par
		\nobreak
		\endgroup
	\fi}
\AtBeginDocument{%
	\expandafter\renewcommand\csname r@tocindent0\endcsname{0pt}
}
\newcommand\blfootnote[1]{%
	\begingroup
	\renewcommand\thefootnote{}\footnote{#1}%
	\addtocounter{footnote}{-1}%
	\endgroup
}
\def\l@subsection{\@tocline{2}{0pt}{2.5pc}{5pc}{}}
\makeatother

\textwidth 15.5cm
\textheight 24cm
\topmargin -0.2cm
\oddsidemargin -0.4cm
\evensidemargin -0.4cm
\hoffset +0.55cm

\DeclareFontFamily{U}{MnSymbolC}{}
\DeclareFontShape{U}{MnSymbolC}{m}{n}{
	<-5.5> MnSymbolC5
	<5.5-6.5> MnSymbolC6
	<6.5-7.5> MnSymbolC7
	<7.5-8.5> MnSymbolC8
	<8.5-9.5> MnSymbolC9
	<9.5-11.5> MnSymbolC10
	<11.5-> MnSymbolCb12
}{}


\let\todon\todo
\renewcommand{\todo}[1]{\todon{\color{red}#1}}

\makeatletter
\providecommand{\leftsquigarrow}{%
  \mathrel{\mathpalette\reflect@squig\relax}%
}
\newcommand{\reflect@squig}[2]{%
  \reflectbox{$\m@th#1\rightsquigarrow$}%
}
\makeatother

\newcommand{\sbt}{\,\begin{tikzpicture}[baseline=(X.base)]%
		\node[draw, fill,black,circle, inner sep=1pt] at (0,0.1) {};
		\node[circle,inner sep=0pt,outer sep=0pt] (X){$\ $};
	\end{tikzpicture}%
	\,
}


\newtheoremstyle{bfnote}%
{}{}%
{\itshape}{}%
{\bfseries}{.}%
{ }%
{\thmname{#1}\thmnumber{ #2}\thmnote{ (#3)}}

\theoremstyle{bfnote}
\newtheorem{theorem}{Theorem}[section]
\newtheorem{example}[theorem]{Example}
\newtheorem{proposition}[theorem]{Proposition}
\newtheorem*{proposition*}{Proposition}
\newtheorem{corollaire}{Corollary}
\newtheorem*{theorem*}{Theorem}
\newtheorem{lemma}[theorem]{Lemma}
\newtheorem{remark}[theorem]{Remark}
\newtheorem{definition}[theorem]{Definition}
\newtheorem{definitionproposition}[theorem]{Definition-Proposition}

\theoremstyle{plain}

\newcommand{\ncl}{ {\mathrm{NCL}}}

\newcommand{\nc}{ {\mathrm{NC}}}
\newcommand{\nco}{{\mathcal{N}\mathcal{C}}}
\newcommand{\mult}{ { \mathrm{Mult} }}


\title[Operator-valued $T$-transform]{On the twisted factorization of the $T$-transform}

\author{Kurusch Ebrahimi-Fard, Nicolas Gilliers}
\address{Department of Mathematical Sciences, Norwegian University of Science and Technology (NTNU), 7491 Trondheim, Norway.}

\parskip1pt
\begin{document}




\begin{abstract}
    The amalgamated $T$-transform of a non-commutative distribution was introduced by K.~Dykema. It provides a fundamental tool for computing distributions of random variables in Voiculescu's free probability theory. The $T$-transform factorizes in a rather non-trivial way over a product of free random variables. In this article, we present a simple graphical proof of this property, followed by a more conceptual one, using the abstract setting of an operad with multiplication.
\end{abstract}
\keywords{Operads, Operads with multiplication, non-crossing partitions, non-crossing linked partitions, operator-valued free probability theory, multiplicative free convolution}
\subjclass{18D50, 46L54, 16W60}
\maketitle
\blfootnote{Date: \today}

\tableofcontents

\def\fs{{\mathbb{C}[[\mathcal{P}]]}}
\def\grc{{G_{\ensuremath\raisebox{2pt}{$\scriptstyle\centerdot$}}^{inv}}}%

\section{Introduction}
\label{sec:intro}

In reference \cite{dykema2007multilinear}, K.~Dykema introduced and studied two central objects in free probability theory, i.e., the operator-valued $R$-transform, more precisely, the \emph{unsymmetrised} $R$-transform, as well as the -- interrelated -- operator-valued \emph{unsymmetrised} $S$- and $T$-transforms. Those transforms play a fundamental role in both scalar- as well as operator-valued free probability theory \cite{mingo2017free,nica2006lectures} as they allow for the effective --algorithmic-- calculation of the distribution of a sum respectively product of free random variables. In the scalar-valued case, they can be traced back to the seminal works by Voiculescu \cite{voiculescu1986addition,voiculescu1987multiplication}. Here, the $R$- and $T$-transforms with respect to a random variable $a$ in a non-commutative probability space $(\mathcal{A}, \phi)$ are formal power series in one variable, $R_a(z), T_a(z) \in \mathbb{K}\langle\langle z \rangle\rangle$. If $a$ and $b$ are two free random variables in $\mathcal{A}$, with $\phi(a)=\phi(b)=1$, then these transforms are linear
\begin{equation}
\label{scalarR}
	R_{a+b}(z) = R_{a}(z) + R_{b}(z)
\end{equation}
respectively multiplicative
\begin{equation}
\label{scalarT}
	T_{ab}(z) = T_{a}(z) \cdot T_{b}(z).
\end{equation}
The product on the right-hand side of \eqref{scalarT}	is the Cauchy product, defined for two series $f(z),g(z) \in \mathbb{K}\langle\langle z \rangle\rangle$ by:
$$
(f\cdot g)_{n} = \sum_{\substack{k,q \geq 0, \\ k+q = n}} f_{k}g_{q}.
$$
The $S$-transform is defined as the inverse (with respect to the Cauchy product) of the $T$-transform, $S_{a}(z)=T_{a}^{-1}(z)$. All three transforms are related through the \emph{distribution series} $\Phi_{a}(z)\in \mathbb{K}\langle\langle z \rangle\rangle$, $a \in \mathcal{A}$,
\begin{equation}
\label{scalardistrib}
	\Phi_{a}(z) = \sum_{n\geq 1} \phi(a^{n})z^{n-1},
\end{equation}
namely \cite{dykema2007multilinear},
\allowdisplaybreaks
\begin{equation}
\label{eqn:defrt}
\begin{split}
	 & (z + z^2 \Phi_{a}(z))^{\langle -1\rangle} = z(1 + z R_{a} (z))^{-1}  \\
	 & (z\Phi_{a}(z))^{\langle -1 \rangle} = z(1 + z)^{-1} S_{a}(z).
\end{split}
\end{equation}
The inverses on the left-hand sides of the above equations are computed with respect to \emph{composition} of formal power series, here denoted by $\circ$ and defined on $ z\mathbb{K}\langle\langle z \rangle\rangle$ by the well-known formula
\begin{equation}
	(f\circ g)_{n} = \sum_{\substack{k,n_{1},\ldots,n_{k} \geq 1 \\ n_{1} + \cdots +n_{k}= n}}f_{k}g_{n_{1}} \cdots g_{n_{k}}.
\end{equation}

\medskip

The so-called free additive and multiplicative convolution problems have been shown by Voiculescu to admit solutions by constructing canonical random variables. Dykema verified in \cite{dykema2007multilinear} that in the operator-valued case, these results admit counterparts, where so-called  \emph{multilinear function series} play the role of formal power series.

More specifically, let $(\mathcal{A},\phi,B)$ be an operator-valued probability space, that is, $\mathcal{A}$ is an unital algebra with unital subalgebra $B \subset \mathcal{A}$ and conditional expectation $\phi: \mathcal{A} \to B$, $\phi(b)=b$ and $\phi(b_1 a  b_2)=b_1\phi(a)b_2$ for all $b,b_1,b_2 \in B$ and $a \in \mathcal{A}$ \cite{mingo2017free}. Note that further below (Definition \ref{Def:OVPS}) we will work with a slightly more general notion of operator-valued probability space. Dykema introduced the notion of multilinear function series, that is, the data of a sequence $(\alpha_{n} : B^{\otimes n} \rightarrow B)_{n\geq 0}\in \mathrm{Mult}[[B]]$ of multilinear maps on the algebra $B$. We adopt the convention that $B^{\otimes 0}=\mathbb{C}$. This implies that $\alpha_0$ can be considered as an element of $B$. Let $a\in \mathcal{A}$ be a random variable with $\phi(a)=1_B(=1_\mathcal{A})$. The multilinear function series $\Phi_{a}=(\Phi_{a,0},\Phi_{a,1},\ldots) \in \mathrm{Mult}[[B]]$ that replaces the scalar-valued distribution series \eqref{scalardistrib} is defined through $\Phi_{a,0}=\phi(a)$ and for $n>0$
\begin{equation}
	\label{eqn:opvalueddistserie}
	\Phi_{a,n} (b_{1},\ldots,b_{n}) = \phi(ab_{1}ab_{2} \cdots ab_{n}a).
\end{equation}
Given two multilinear function series $\alpha, \beta \in \mathrm{Mult}[[B]]$, we define their formal product by
\begin{equation}
\label{multiplication}
	(\alpha\centerdot\beta)_{n}(b_{1},\ldots, b_{n}) = \sum_{k=0}^{n}\alpha_{k}(b_{1},\ldots, b_{k})\beta_{n-k}(b_{k+1},\ldots, b_{n}).
\end{equation}
This product turns the space $\mathrm{Mult}[[B]]$ into an unital algebra, with unit $1 = (1_{B},0,\ldots) \in \mathrm{Mult}[[B]]$. We define a second product on $\mathrm{Mult}[[B]]$ given by  \emph{composition}, denoted $\alpha \circ \beta$, if $\beta_{0} = 0$, by $(\alpha \circ \beta)_{0} = \alpha_{0}$, and
\begin{equation*}
	(\alpha\circ\beta)_{n}(b_{1},\ldots,b_{n}) =
	\sum_{k=1}^{n}
	\sum_{\substack{p_{1},\ldots,p_{k}
	\geq 1 \\p_{1}+\cdots+p_{k}=n}}
	\alpha_{k}\Big(\beta_{p_{1}}(b_{1},\ldots, b_{p_{1}}),\ldots, \beta_{p_{k}}(b_{q_{k}+1},\ldots, b_{q_{k}+p_{k}})\Big),
\end{equation*}
where $q_{j} := p_{1} + \cdots + p_{j-1}$. Notice that composition is linear only in the left argument. The unit for this product is the multilinear function series
\begin{equation}
\label{compId}
    I = (0, \mathrm{id}_{B}, 0, \ldots).
\end{equation}
Following an analogue approach as in \eqref{eqn:defrt} using  $\Phi_{a}$  defined by \eqref{eqn:opvalueddistserie}, one obtains operator-valued counterparts of the $S$-, $R$- and $T$-transforms that are now multilinear function series. By constructing random variables with prescribed $T$-transform, Dykema showed in \cite{dykema2007multilinear} that the multiplicativity of the $T$-transform \eqref{scalarT} in the scalar-valued case generalises to the following so-called twisted factorization in the operator-valued case:
\begin{equation}
	\label{eqn:twisted}
	T_{ab} = \big(T_{a}\circ (T_{b}\centerdot I \centerdot T_{b}^{-1})\big)\centerdot T_{b}.
\end{equation}
Note the appearance of the product defined in  \eqref{multiplication}.
\def\ncldc{{{\rm NCL}_{D}^{(1)}}}

R.~Speicher showed in \cite{speicher1994multiplicative} that non-crossing set partitions underlie the combinatorics of moment-cumulant relations. To relate free cumulants to the coefficients of the $T$-transform,  the same role is played by so-called \emph{non-crossing linked set partitions}. In particular, M\"obius inversion on a certain poset $\ncldc$ of \emph{connected non-crossing linked set partitions} relates coefficients of the $T$-transform to free cumulants,
\begin{equation}
\label{eqn:cumulantttransform}
\kappa_{n}(ab_{1},\ldots,ab_{n-1},a) = \sum_{\pi \in \mathrm{NCL}_{D}^{(1)}(n)} t_{a}(\pi)(b_{1},\ldots,b_{n-1}).
\end{equation}
For small values of $n$, one has
\begin{align*}
\kappa_{2}(ab_{1}, a) &= t_{a}(1)(b_{1})\\
\kappa_{3}(ab_{1},ab_{2},a) &= t_{a}(2)(b_{1},b_{2}) + t_{a}(1)(b_{1}t_{a}(1)(b_{2}))\\
\kappa_{4}(ab_{1},ab_{2},ab_{3},a) &= t_{a}(3)(b_{1},b_{2},b_{3}) + t_{a}(2)(b_{1}t_{a}(1)(b_{2}), b_{3}) \\
&\quad +t_{a}(2)(b_{1},b_{2}t_{a}(1)(b_{3})) + t_{a}(1)(b_{1}t_{a}(2)(b_{2},b_{3})).
\end{align*}
Here, $t_{a}(1_n)=t_{a}(n)$. We refer to \cite{dykema2007multilinear} for the precise definition of the multilinear map $t_{a}(\pi)$. Let us just mention that $t_a(\pi)$ factorises in a certain way over the blocks of a non-crossing linked partition. In this article, this factorization is expressed in terms of a certain morphism, whose range comprises the values $t_a(\pi),\pi\in \mathrm{NCL}_{D}^{(1)}$, and which is compatible with an operadic composition on non-crossing linked partitions.
The above relations (one for each integer $n\geq 1$) can be cast into a fixed point equation on multilinear function series:
$$
K_{a} = 1+\sum_{n\geq 1}K_a^{(n)},\quad K_a^{(n)}(b_{1},\ldots,b_{n}) = \kappa_{n+1}(ab_{1},ab_{2},\ldots,ab_{n},a).
$$
The relations displayed in \eqref{eqn:cumulantttransform} are equivalent to $$
K_{a} = T_{a} \circ (I\centerdot K_{a}),
$$
where $I$ is the identity for composition \eqref{compId} in $\mathrm{Mult}[[B]]$.

\medskip


In the first part of this work we will give a different (and shorter) proof of equation \eqref{eqn:twisted} which is operadic in nature. It does not involve the construction of canonical random variables and therefore yields to a more conceptual understanding of equation \eqref{eqn:twisted} as resulting from distributivity of the composition product $\circ$ over the product $\centerdot$.

This distributivity together with the specific form of the composition product $\circ$ are constitutive to the notion of Gerstenhaber algebra. Examples of such type of algebras  are obtained by considering operads equipped with a distinguished operator of arity two that we call \emph{multiplication} (for reasons explained below). This multiplication endows the set of operators with a monoidal law, over which the operadic product distributes. The operad with multiplication used in our work is the endomorphism operad of $B$ spanned by multilinear maps over the algebra $B$. The multiplication coincides with the algebra product on $B$.

\begin{remark}
It is worth noticing that these algebras admit a homotopical version \cite{gerstenhaber1994homotopy}. In this context, the operadic multiplication can be used to define a differential on the underlying collection of the operad. This construction yields for example the Hochschild cohomology of $B$, if one starts with the endomorphism operad of $B$.
\end{remark}

\def\gr{{G^{inv}}}
\def\grm{{G^{inv}_m}}

In the second part of this work, we take a leap in abstraction and introduce in the setting of an operad $\mathcal{P}$ with multiplication $m$ a free product on the set  $G^{inv}$ of formal series of operators  -- with non-zero constant coefficient. The latter is supporting a monoidal structure stemming from the multiplication $m$. We define in this context the $T$-transform.

This permits us to give another, in some sense more fundamental proof of the twisted factorization of the abstract $T$-transform. In particular, we highlight the role of left and right translations by the identity of the operad. These maps provide two different injections of the set $G^{inv}$ into the \emph{diffeomorphism group} $G$ of the operad $\mathcal{P}$ (certain series of operators on $\mathcal{P}$) denoted $\rho$ and $\lambda$.

These translations are not group morphisms. However, they are injective and their inverses are \emph{cocycles with respect to a left action} $\curvearrowright$ \emph{of the diffeomorphism group $G$ on $G^{inv}$}. This algebraic setup is well understood. See for example \cite{frabetti}. We consider in addition a right action $\curvearrowleft$ which is the conjugation action of the group $(G^{inv},\centerdot)$ restricted to $G$. This permits to relate the translations $\rho$ to $\lambda$.
Ultimately, the twisted factorization of the abstract $T$-transform is implied by
\begin{enumerate}
    \item Distributivity of the left action $\curvearrowright$ of the group $G$ over the product $\centerdot$,
    \item The cocycle property of $\rho$ with respect to the right action $\curvearrowleft$,
    \item Compatibility of the right action $\curvearrowleft$ with the product of $G$.
\end{enumerate}


\subsection{Outline}
\label{ssec:outlein}

$\sbt$ After the introduction, the article is divided into two parts. In the first part (Sections \ref{sec:operadsbrace} and \ref{sec:twistedmultiplicativity}) we provide the reader with the necessary background on operads and brace algebras. We then define the two main operads for the present work, which are the operad of non-crossing partitions (see Definition \ref{def:setpartitions}) and the operad of  non-crossing linked partitions (see Definition \ref{def:operadnoncrossinglinkedpartition}). In Subsection \ref{ssec:operadwithmult}, we define operads with multiplication as well as brace algebras.

$\sbt$ In Section \ref{sec:twistedmultiplicativity}, we first address the problem of computing operator-valued free cumulants with products as entries and give a fixed point equation for computing the free cumulants of the product of two operator-valued free random variables, see Subsection \ref{ssec:operadwithmult}. To the extend of our knowledge, this fixed point equation is new.

$\sbt$ Given this fixed point equation, we deduce a short proof of Theorem \ref{thm:twistedmultplicativityun}, pointing out the relation in Proposition \ref{prop:distributivity} as the key property of the concatenation and composition products for the twisted factorization to hold.

$\sbt$ In Subsection \ref{ssec:freeproductoperadmultiplication}, we define the free product and  the $T$-transform in this abstract setting.


\subsection{Basic notions and notations}

We recall the definition of an operator-valued non-commutative probability space  \cite{mingo2017free}.

\begin{definition}[Operator-valued probability space]\label{Def:OVPS}
An operator-valued probability space is a triple $(\mathcal{A},\phi,B)$ such that
\begin{enumerate}[\indent 1.]
	\item $\mathcal{A}$ is a von-Neumann algebra\footnote{$\mathcal{A}$ can also be $C^{\star}$ algebra, depending on the type of functional calculus on $\mathcal{A}$ we want to have access to, either borelian, continuous or polynomial.},
	\item
	$B$ is a $C^{\star}$ algebra,
	\item
	$\mathcal{A}$ is a $B$-$B$ bimodule, meaning that $B$ acts from the left as well as from right on $\mathcal{A}$
	\begin{equation*}
		b_{1}\cdot(a\cdot b_{2}) = (b_{1} \cdot a)\cdot b_{2},\quad (b \cdot a)^{\star} = a^{\star}\cdot b^{\star}
		\qquad b,b_{1},b_{2} \in B, a \in \mathcal{A}.
	\end{equation*}
	\item
	The state, $\phi : \mathcal{A} \to B$, is a $B$-$B$ bimodule morphism\footnote{However, it is \emph{not} an algebra morphism.} which is positive:
	\begin{equation*}
		\phi(aa^{\star}) \in BB^{\star}
		\qquad a\in\mathcal{A}.
	\end{equation*}
\end{enumerate}
\end{definition}

In the present work, functionals of random variables are restricted to polynomial ones. As a consequence, we can drop all topological assumptions in the above definition. In particular, $\mathcal{A}$ and $B$ are only assumed to be involutive algebras.


\section{Operads and brace algebras}
\label{sec:operadsbrace}


In this section, we recall the definitions of operads and brace algebras. The reader is directed to the monograph \cite{loday2012algebraic} for a detailed introduction.

\subsection{Algebraic planar operads}
\label{ssec:operads}

 Operads are models for composing operators with multiple inputs and a single output. They have been introduced by Boardman and Vogt in the 1970's. There exist multiple equivalent definitions of an operad. In this section, we adopt a rather algebraic point of view by defining an operad first as a monoid in a certain monoidal category that we introduce. It should be understood that what we call an operad in this article is also called a planar operad; the set of operators we consider are vector spaces and are not assumed to be endowed with an action of the symmetric group.

 Operators are organized in a \emph{collection} $C$, that is a sequence of vector spaces $(C(n))_{n\geq 1}$. The vector space $C(n)$ comprises all operators with $n$ inputs. We assume this collection to be reduced which means that all operators have at least one input. The number of inputs of an operator $p$  is denoted $|p|$ and is called its \emph{arity}.

A morphism between two collections $C$ and $D$ is a sequence of linear morphisms $(\phi(n))_{n\geq1}$ with $\phi({n}): C(n) \rightarrow D(n)$,~$n \geq 1$. We denote by $\mathrm{Coll}$ the category of all collections. We remark that it is an abelian category in an obvious way, the sum $C\oplus D$ of two collections is the collection defined by
\begin{equation}
    (C\oplus D)(n)=C(n)\oplus D(n).
\end{equation}

The category $\mathrm{Coll}$  can be endowed with a monoidal structure, where the tensor product, here denoted $\sbt$, is the $2$-functor from $\mathrm{Coll} \times \mathrm{Coll}$ to $\mathrm{Coll}$ defined by:
\begin{align*}
	 & (C \sbt D)(n) = \bigoplus_{\displaystyle{\substack{k \geq 1 \\n_{1}+\cdots+n_{k}=n } }}
	C(k) \otimes D(n_{1}) \otimes \cdots \otimes D(n_{k}),         \\
	 & (f \sbt g)(n)= \bigoplus_{\displaystyle{\substack{k \geq 1  \\n_{1}+\cdots+n_{k}=n } }}
	f(k) \otimes g(n_{1}) \otimes \cdots \otimes g(n_{k}).
\end{align*}
The unit element for this product on $\mathrm{Coll}$ is the collection denoted by $\mathbb{C}_{\sbt}$ such that $\mathbb{C}_{\sbt}(n)=\delta_{n=1}\mathbb{C}$, this means
\begin{equation}
    C \sbt \mathbb{C}_{\sbt} \simeq C ~{\rm and }~ \mathbb{C}_{\sbt} \sbt C \simeq C.
\end{equation}
Given two collections $C$ and $D$, the set ${\rm Hom}_{\rm Coll}(C,D)$ of collection homomorphisms from $C$ to $D$ is a vector space. However, the tensor product of two collection morphisms is linear only on its left argument, in particular $f\sbt\lambda g \neq \lambda f\sbt g$, for any $f,g\in {\rm Hom}_{\rm Coll}(C,D)$.

\begin{definition}[Operad]
An \emph{operad} $\mathcal{C}$ is a monoid in the monoidal category $(\mathrm{Coll},\sbt, \mathbb{C})$, i.e., a triple $(C, \gamma_{\scriptscriptstyle{\mathcal{C}}},\eta_{\scriptscriptstyle{\mathcal{C}}})$, $C \in \mathrm{Coll}$, with
$$
	\gamma_{\scriptscriptstyle{\mathcal{C}}}: C \sbt C  \rightarrow  C,~\eta_{{\scriptscriptstyle{\mathcal{C}}}}: \mathbb{C} \rightarrow {C},
$$
satisfying associativity and unitality constraints, namely
$$
\gamma_{\scriptscriptstyle{\mathcal{C}}} \circ (\gamma_{\scriptscriptstyle{\mathcal{C}}} \sbt \mathrm{id}_{{\scriptscriptstyle{\mathcal{C}}}})=
\gamma_{\scriptscriptstyle{\mathcal{C}}} \circ (\mathrm{id}_{{\scriptscriptstyle{\mathcal{C}}}} \sbt \gamma_{\scriptscriptstyle{\mathcal{C}}})
$$
$$
\gamma_{\scriptscriptstyle{\mathcal{C}}} \circ (\eta_{{\scriptscriptstyle{\mathcal{C}}}} \sbt \mathrm{id}_{{\scriptscriptstyle{\mathcal{C}}}})= \gamma_{\scriptscriptstyle{\mathcal{C}}}  \circ (\mathrm{id}_{{\scriptscriptstyle{\mathcal{C}}}} \sbt \eta_{{\scriptscriptstyle{\mathcal{C}}}}) = \mathrm{id}_{{\scriptscriptstyle{\mathcal{C}}}}.
$$
\end{definition}

It is common to use the symbol $\circ$ to denote the composition $\gamma_{{\scriptscriptstyle{\mathcal{C}}}}$,	$\gamma_{\scriptscriptstyle{\mathcal{C}}}\big(p \otimes q_{1} \otimes \cdots \otimes q_{|p|}\big) = p \circ (q_{1}\otimes \cdots \otimes q_{|p|}).$ In the remaining part of this article we follow this convention.
More practically, one can exploit the unital and associativity constraints to associate \emph{partial compositions} to any operad $(C, \gamma_{\scriptscriptstyle{\mathcal{C}}},\eta_{\scriptscriptstyle{\mathcal{C}}})$, which compose an operator $p$ in $C$ with another operator $q$ at a certain input of $p$:
\begin{equation*}
	p\circ_{k}q = p \circ (\textrm{id}_{\scriptscriptstyle{\mathcal{C}}}^{\otimes k-1} \otimes q \otimes \textrm{id}_{\scriptscriptstyle{\mathcal{C}}}^{\otimes |p|-k})
	\qquad
	1 \leq k \leq |p|.
\end{equation*}
Associativity of $\gamma_{\scriptscriptstyle{\mathcal{C}}}$ translates as follows for the partial compositions:
\begin{align*}
    (p \circ_{i} q) \circ_{j} r
    &= (p \circ_{j} r) \circ_{i+|r|-1} q \qquad 1 \le j < i \le |p|\\
   p\circ_{i} (q \circ_{j} r)
   & = (p\circ_{i} q) \circ_{j+i-1} r
   \qquad 1 \le i \le |p|,\ 1\le j \le |q|.
\end{align*}
In the next sections, we define two operads on non-crossing set partitions and on non-crossing linked set partitions. These two operads are in fact set-operads that we regard as operads (in the category of vector spaces) with preferred basis that behave well under the operadic products. For the time being, let us give two examples of operads.
\begin{example}
\label{ex:operads}
	\begin{enumerate}[1.]
		\item The forgetful functor from the category of operads to the category of collections admits a left adjoint that we call the free functor $\mathcal{F}$. Given a collection $C$, the free operad $\mathcal{F}(C)$ over $C$, is spanned by \emph{planar rooted trees} with internal vertices decorated with elements in the collection $C$.  Recall that an internal vertex of a tree is a vertex with at least one input. The degree of a decoration of an internal vertex matches the number of its inputs.
		In a linear setting, that is if $C$ is a collection of vector spaces, we identify a tree $\tau$ having a vertex decorated by a sum $a+b$ of elements $a,b \in C$ with the sum of two trees, each obtained by replacing the decoration $a+b$ with $a$ or $b$.
        A leaf is a vertex of a tree with no inputs. The number of leaves of a tree is the number of its inputs. The collection $C$ corresponds to decorated corollas, trees with only one internal vertex. Composition of a tree $\tau$ with another tree $\tau^{\prime}$ at the $i^{th}$ input of $\tau$ (the leaves of $\tau$ are ordered from the leftmost to the rightmost leaf)  is obtained by \emph{grafting} the root of $\tau^{\prime}$ to the leaf of $\tau^{\prime}$. The identity is the tree with a single vertex.

		\item Given a vector space $V$, we denote by $\mathrm{End}_{V}$ the operad whose underlying collection consists of all multilinear maps on $V$,
		      \begin{equation*}
			      \mathrm{End}_{V}(n) = \mathrm{Hom}_{\mathrm{Vect}}(V^{\otimes n}, V),
		      \end{equation*}
		and the composition is induced by composition of functions.

        \item \label{ex:wordinsertionoperad} \emph{Operad $\mathcal{W}$} of word-insertions. Let $\mathcal{A}$ be an alphabet. Set $T(\mathcal{A}) := \bigoplus_{n\geq 1} \mathcal{A}^{n}$, the space of all non-commutative polynomials in elements from $\mathcal{A}$, and $\bar{T}(\mathcal{A}) := \mathbb{C}\cdot\emptyset \oplus T(\mathcal{A})$. Then $\bar{T}(\mathcal{A})$ is an unital algebra for the concatenation product, denoted $\cdot$, and with unit $\emptyset$.
		The degree $|w|$ of an element $w=w_1 \cdots w_n \in \mathcal{A}^{\otimes n}$ is $|w|=n+1$ and $|\emptyset|=1$. We turn $\bar{T}(\mathcal{A})$ into the operad $\mathcal{W}$ by defining the operadic composition $\gamma_{\scriptscriptstyle{\mathcal{W}}}$ for a word $w=w_1 \cdots w_n$:
		\begin{equation*}
			\gamma_{\scriptscriptstyle{\mathcal{W}}}(w \otimes u_{0}\otimes\cdots\otimes u_{n}) = u_{0}\cdot w_{1} \cdot u_{1} \cdots u_{n-1}\cdot w_{n}\cdot u_{n}.
		\end{equation*}
	\end{enumerate}
\end{example}

The definition of a planar operad uses the monoidal structure of the category Vect$_{\mathbb{C}}$ of all vector spaces. Replacing the monoidal category (Vect$_{\mathbb{C}}$) by another yields the notion of an operad in a symmetric monoidal category. For example, one can replace Vect$_{\mathbb{C}}$ by the category Set of sets with bijections. It is a monoidal category for the cartesian product of sets. A monoid in the category of all set collections is called a set operad.

We recall here two different tensor products on the category of operads, that will be  briefly used in the forthcoming sections.

\begin{definition}[Hadamard product]
	\label{def:hadamardproduct}
	Let $\mathcal{P}=(P,\gamma_{\scriptscriptstyle{\mathcal{P}}})$ and $\mathcal{Q}=(Q,\gamma_{\scriptscriptstyle{\mathcal{Q}}})$ be two operads. The Hadamard product of $\mathcal{P}$ and $\mathcal{Q}$ is the operad $\mathcal{P}\otimes_{H}\mathcal{Q} = (P\otimes_{H} Q,\gamma_{\scriptscriptstyle{\mathcal{P}\otimes_{H}\mathcal{Q}}})$ defined by
	\begin{align*}
		&\left(P\otimes_{H}Q\right)(n) = P(n)\otimes Q(n) \\
		&\gamma_{\scriptscriptstyle{\mathcal{P}\otimes_{H}\mathcal{Q}}}(p\otimes q\,\otimes\, (p_{1}\otimes q_{1} \otimes\cdots\otimes p_{n}\otimes q_{n})) = \gamma_{\scriptscriptstyle{\mathcal{P}}}(p\otimes p_{1}\otimes\cdots\otimes p_{n}) \otimes \gamma_{\scriptscriptstyle{\mathcal{Q}}}(q \otimes q_{1}\otimes\cdots\otimes q_{n}).
	\end{align*}
\end{definition}

\begin{definition}[Free product]
	\label{def:freeproduct}
	Let $\mathcal{P}=(P,\gamma_{\scriptscriptstyle{\mathcal{P}}})$ and $\mathcal{Q}=(Q,\gamma_{\scriptscriptstyle{\mathcal{Q}}})$ be two operads. The \emph{free product} of $\mathcal{P}$ and $\mathcal{Q}$ is the operad $\mathcal{P}\sqcup \mathcal{Q}$ obtained by quotiening the free operad on the collection $\mathcal{P}\oplus\mathcal{Q}$ by relations in the operad $\mathcal{P}$ as well as relations in the operad $\mathcal{Q}$, but no other.
\end{definition}

It may happen that operators which we want to compose have different input and output ranges. A model for composing such operators is called a \emph{coloured operad}. Collections are replaced by coloured collections, each vector space $C(n)$ of operators with $n$ inputs is split into a direct sum of spaces $C^{c^{\prime}}_{c_{1},\ldots,c_{n}}$, $c_{1},\ldots,c_{n},c^{\prime} \in {\sf C}$ comprising all operators with source spaces labeled $c_{1},\ldots,c_{n}$ and target labeled $c^{\prime}$ for some set of colors ${\sf C}$. Formal composition of collections (the monoidal product $\bullet$) admits a coloured version, for which operators are formally composed provided that colorations of inputs and outputs match.

\subsection{Operads of non-crossing partitions}
\label{ssec:ncOperadlinkedPart}
\subsubsection{The gap insertion operad}
\label{ssec:partitionoperad}

In this section we recall the definition of the \emph{gap insertion operad} on non-crossing partitions first introduced in \cite{ebrahimi2019operads}. We define a new operad on \emph{non-crossing linked partitions}. Note that we adopt a more general definition of the latter, which are in fact coverings (blocks may intersect).

To fix notations, we denote by $\nc(n)$ the set of all non-crossing partitions of the interval $\llbracket 1,\ldots,n \rrbracket$ (with its natural order). Recall that $\pi \in \nc(n)$ if it is a partition of $\llbracket 1,\ldots,n \rrbracket$ and no blocks of $\pi$ cross, which means that for any sequence of integers $a < b < c < d$ in $\llbracket 1,\ldots,n \rrbracket$, one has for two blocks $V,W \in \pi$ that
\begin{equation}
    {~\rm if~} a,c \in V\ \text{and}\ ~ b,d\in W,~ {~\rm then ~}V=W.
\end{equation}

The operadic view on a partition $\pi \in \nc(n)$ is that of an operator with $n+1$ inputs ($|\pi|=n+1$). An input corresponds to a gap between two consecutive elements of the partitioned set, including the front gap before the element $1$ and the back gap after the element $n$. Hence, we may therefore insert $n+1$ non-crossing partitions into a non-crossing partition by stuffing them into the gaps of the latter.

\begin{definition}[Operad $\mathcal{N}\mathcal{C}$ of non-crossing partitions]
	\label{def:setpartitions}
	We set $\nco(n) := \mathbb{C}\left[\nc(n-1)\right]$. In particular, we have $\nco(1)=\mathbb{C}\{\{\emptyset\}\}$. The unique partition of the empty set acts as the operad unit. Let $\pi$ be a non-crossing partition and $(\alpha_{1},\ldots,\alpha_{|\pi|})$ a sequence of non-crossing partitions, we define
	\begin{equation*}
		\gamma_{\scriptscriptstyle{\nco}}(\pi \otimes \alpha_{1} \otimes \cdots \otimes \alpha_{|\pi|})
		= \bigcup_{i=1}^{|\pi|}\{i-1+b,~b \in \alpha_{i}\} \cup \tilde{\pi},
	\end{equation*}
	where $\tilde{\pi}$ is the non-crossing partition of $\{|\pi_{1}|, |\alpha_{1}|+|\alpha_{2}|,\ldots,|\alpha_{1}|+\cdots+|\alpha_{|\pi|}|\}$ induced by $\pi$.
\end{definition}

\begin{figure}[!ht]
	\centering
	\includegraphics[scale=0.85]{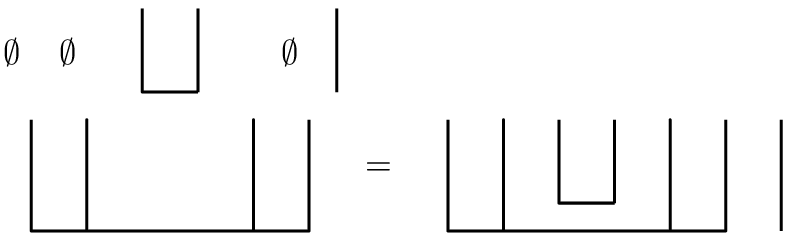}
	\caption{\label{fig:operadinsertion} Example of composition in the gap-insertion operad $\mathcal{N}\mathcal{C}$.}
\end{figure}

\begin{definition}[Coloured non-crossing partitions]
	A coloured non-crossing partition is an element $\pi \otimes w$, $|w|= |\pi|$ in the Hadamard tensor product $\mathcal{N}\mathcal{C} \otimes_{H} \mathcal{W}$ (see item \ref{ex:wordinsertionoperad} in Example \ref{ex:operads} for the definition of the word-insertions operad $\mathcal{W}$).
\end{definition}

The gap-insertion operad of non-crossing partitions admits the following presentation in terms of generators and relations.

\begin{lemma}[Prop.~3.1.4 in \cite{ebrahimi2019operads}]
	For any $n\geq 1$, we put $\mathbb{1}_{n+1} = \{\llbracket 1,n\rrbracket\}$. Then the operad $(\mathcal{N}\mathcal{C}, \gamma_{\scriptscriptstyle{\mathcal{N}\mathcal{C}}})$ is generated by the elements $\mathbb{1}_{n}$, $n \geq 1$ with the relation:
	\begin{equation*}
		\forall m,n \geq 1,\quad \mathbb{1}_{m} \circ_{m} {\mathbb{1}}_{n} = \mathbb{1}_{n} \circ_{1} \mathbb{1}_{m}.
	\end{equation*}
\end{lemma}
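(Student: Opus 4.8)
The plan is to exhibit the operad $\mathcal{N}\mathcal{C}$ as a quotient of the free operad $\mathcal{F}(C)$, where $C$ is the collection with $C(n+1)=\mathbb{C}\cdot\mathbb{1}_{n+1}$ for $n\geq 1$ (and $C(1)=0$), by the operadic ideal generated by the displayed commutation relation. Concretely, one shows (i) that the elements $\mathbb{1}_{n}$ generate $\mathcal{N}\mathcal{C}$ as an operad, i.e.\ every non-crossing partition is obtained from the corollas $\mathbb{1}_{n}$ by iterated partial compositions; and (ii) that every relation among these generators is a consequence of the single family $\mathbb{1}_{m}\circ_{m}\mathbb{1}_{n}=\mathbb{1}_{n}\circ_{1}\mathbb{1}_{m}$.

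\textbf{Step 1: generation.} First I would observe, directly from Definition \ref{def:setpartitions}, that $\mathbb{1}_{n+1}$ corresponds to the one-block partition $\{\llbracket 1,n\rrbracket\}\in\nc(n)$, so $\gamma_{\scriptscriptstyle{\nco}}$ applied to $\mathbb{1}_{n+1}$ with partitions inserted into its $n+1$ gaps ``nests'' those partitions under a single outer block. Given an arbitrary $\pi\in\nc(n)$, pick its (unique) outer block $V$; since $\pi$ is non-crossing, the complement decomposes into the part of $\pi$ ``inside'' the gaps of $V$ and nothing crosses $V$, so $\pi=\mathbb{1}_{|V|+1}\circ(\alpha_1\otimes\cdots\otimes\alpha_{|V|+1})$ for suitable smaller non-crossing partitions $\alpha_i$. (If there is more than one outer block one first splits $\pi$ as a composition along $\mathbb{1}_k$'s that separate the outer blocks — this is where the front/back-gap convention of the operad does the bookkeeping.) Induction on $n$ (or on the number of blocks) then shows every $\pi$ is a word in the $\mathbb{1}_n$'s under partial composition. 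This is essentially a repackaging of the standard recursive structure of non-crossing partitions.

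\textbf{Step 2: completeness of the relations.} Let $\mathcal{R}$ be the operad presented by generators $\mathbb{1}_n$, $n\geq 1$, modulo the relation above, and let $\Psi\colon\mathcal{R}\to\mathcal{N}\mathcal{C}$ be the surjection from Step 1 sending generator to generator; one checks the relation holds in $\mathcal{N}\mathcal{C}$ by a direct computation with $\gamma_{\scriptscriptstyle{\nco}}$ (both sides give the two-block nested partition $\{\llbracket 1,m+n-1\rrbracket \text{ broken appropriately}\}$ — a quick case check). To see $\Psi$ is injective, I would produce a normal form for elements of $\mathcal{R}$: using the relation one can always rewrite a composite so that, reading the underlying planar tree from the root, partial compositions are performed ``outermost-first / left-to-right'' in a prescribed order, and then show this normal form is in bijection with $\nc$ via the recursive decomposition of Step 1. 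Equivalently, one argues that $\dim\mathcal{R}(n)\leq |\nc(n-1)|=C_{n-1}$ by a rewriting/confluence argument on the one relation, while surjectivity gives the reverse inequality, forcing $\Psi$ to be an isomorphism.

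\textbf{Main obstacle.} The routine part is checking the relation holds in $\mathcal{N}\mathcal{C}$; the real work is the confluence/normal-form argument in Step 2 — i.e.\ showing the \emph{single} commutation relation suffices to reduce every planar composite of $\mathbb{1}_n$'s to a canonical non-crossing-partition shape, with no hidden relations. This amounts to verifying that the rewriting system generated by $\mathbb{1}_{m}\circ_{m}\mathbb{1}_{n}\rightsquigarrow\mathbb{1}_{n}\circ_{1}\mathbb{1}_{m}$, together with the operad associativity axioms, is confluent (e.g.\ by checking the finitely many critical pairs coming from three overlapping generators). Once confluence is established, the count of normal forms matches the Catalan numbers and the presentation is complete. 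Since this is precisely Prop.~3.1.4 of \cite{ebrahimi2019operads}, I would in practice cite that source for the detailed rewriting analysis and only sketch the generation step and the relation check here.
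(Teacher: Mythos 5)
The paper does not actually prove this lemma: it is quoted from the cited reference \cite{ebrahimi2019operads} with no in-text argument, so there is nothing internal to compare your proposal against. That said, your two-step strategy --- generation via the recursive decomposition of a non-crossing partition, then completeness of the single relation via a normal form and a Catalan-number count --- is the standard and correct way to establish such a presentation, and you rightly locate the genuine work in the confluence of the rewriting rule $\mathbb{1}_{m}\circ_{m}\mathbb{1}_{n}\rightsquigarrow\mathbb{1}_{n}\circ_{1}\mathbb{1}_{m}$, which you (like the authors) delegate to the source. Two small corrections to your sketch. In Step~1 you can avoid the case distinction on several outer blocks by always splitting along the block $V\ni 1$, which is automatically outer since nothing precedes $1$; writing $\pi=\mathbb{1}_{|V|+1}\circ(\mathbf{1},\alpha_{2},\ldots,\alpha_{|V|+1})$, any further outer blocks land in the back-gap argument $\alpha_{|V|+1}$ and are absorbed by the induction, so no separate ``splitting along $\mathbb{1}_k$'s'' is needed. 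In your relation check, the common value of $\mathbb{1}_{m}\circ_{m}\mathbb{1}_{n}$ and $\mathbb{1}_{n}\circ_{1}\mathbb{1}_{m}$ is the partition of $\llbracket 1,m+n-2\rrbracket$ into the two \emph{adjacent} interval blocks $\{1,\ldots,m-1\}$ and $\{m,\ldots,m+n-2\}$ --- concatenated side by side, not nested (nesting is what the compositions $\circ_i$ with $1<i<m$ produce, and those are \emph{not} subject to any relation); in particular the case $m=n=2$ is precisely the associativity of $\pmb{|}$, consistent with its role as the multiplication of $\nco$ later in the paper. As a sanity check on the completeness claim, at arity $4$ the free operad on $\mathbb{1}_2,\mathbb{1}_3,\mathbb{1}_4$ has $11$ tree monomials with four leaves, and the relation collapses them to $1+3+1=5=C_3=|\nc(3)|$ classes, matching $\dim\nco(4)$.
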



\medskip

Recall from \cite{gilliers2020shuffle} that the distribution of a random variable $a \in \mathcal{A}$ in an operator-valued probability space $(\mathcal{A}, \phi, B)$ yields an operadic morphism ${\hat\Phi}_{a}$ on the gap-insertion operad with values in the operad of endomorphisms  $\mathrm{End}_{B}$ of $B$, prescribed by
\begin{equation}
\label{eqn:operadicmorphismmoments}
	{\hat\Phi}_{a}(\mathbb{1}_{n})(b_{0},\ldots,b_{n}) := \phi(b_{0}ab_{1}a\cdots ab_{n})
\end{equation}
with $b_{0},\ldots,b_{n} \in B$. Because $\phi$ is $B$-$B$ bimodule map the morphism ${\hat\Phi}_{a}$ is well-defined.

The free cumulants $\{\kappa_n(a)\}_{n>0}$,  $a \in \mathcal{A}$, correspond to another operadic morphism
$$
    {\sf \hat{K}}_{a}:\mathcal{N}\mathcal{C} \rightarrow \mathrm{End}_{B}
$$
such that
\begin{equation}
\label{eqn:operadicmorphismcumulants}
	{\sf \hat{K}}_{a}(\mathbb{1}_{n})(b_{0},\ldots,b_{n}) = \kappa_{n}(b_{0}ab_{1},\ldots,a b_{n}).
\end{equation}


\subsubsection{Nesting-or-linking operad}
\label{sssec:noncrossinglinkedpartition}


We now give the definition of a so-called non-crossing linked partition. As said, our definition is more general than what is usually given in the literature.

\begin{definition}[Non-crossing linked partitions] Let $n$ be a positive integer. A \emph{non-crossing linked {\rm{(ncl)}} partition} is a collection $\pi$ of subsets (blocks) of $\llbracket 1,n\rrbracket$ such that:
	\begin{enumerate}[$\indent 1.$]
		\item $\bigcup_{V \in \pi} V = \llbracket 1,n\rrbracket$
		\item for $U$ and $V$ two blocks of $\pi$, if $ a < c < b < d$ with $a,b \in U$ and $c,d \in V$, then $V = U$,
		\item if $U\neq V$ and $|U|, |V| > 1,~|U \cap V| > 0$, then $U \cap V = \{x\}$ and $x = \min U$ or $x = \min V$.
	\end{enumerate}
	For $n \geq 1$, we denote by $\ncl(n)$ the set of non-crossing linked partitions of $\llbracket 1,n\rrbracket$.
\end{definition}
We refer the reader to Figure~\ref{fig:nclpartition} for examples of non-crossing linked partitions.

\begin{figure}[!ht]
	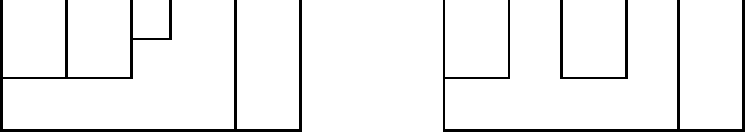
	\caption{\label{fig:nclpartition} On the left, we have a connected ncl partition. On the right, we have a ncl partition.}
\end{figure}

Two blocks of a ncl partition are allowed to meet at their minimal elements, this is the main difference between our definition of ncl partitions and the one given in \cite{dykema2007multilinear}. It allows us to define a natural operadic structure on ncl partitions. We denote $\ncl_{D}$ the subset of all non-crossing linked partitions with no pairs of blocks intersecting at their minimal elements.

\begin{definition}[Nesting-or-linking operad]
	\label{def:operadnoncrossinglinkedpartition}
	Define the degree $|\pi|$ of a ncl partition by $|\pi| = n$ if $\pi \in \ncl(n)$. We denote by $\mathcal{NCL}(n)$ the vector span of $\ncl(n)$, ${n \geq 1}$. The operadic composition $\gamma_{\scriptscriptstyle{\mathcal{NCL}}}:\mathcal{NCL}\sbt \mathcal{NCL}\to \mathcal{NCL}$
	is defined as follows. Given a ncl partition $\alpha$ in $\ncl(n)$ and $\beta_{1},\ldots,\beta_{n}$ a sequence of ncl partitions, we define:
	\begin{equation*}
		\gamma_{\scriptscriptstyle{\mathcal{NCL}}}(\alpha \otimes \beta_{1} \otimes\cdots\otimes\beta_{n})
		=\bigcup_{i=1, \ldots, n} \{ |\beta_{i-1}| + V ,~V \in \beta_{i} \} \cup \tilde{\alpha},
	\end{equation*}
	where $\tilde{\alpha}$ is the ncl partition of the set $\{1, 1+|\beta_{1}|, |\beta_{1}|+|\beta_{2}|+1,\ldots, |\beta_{1}| + \cdots + |\beta_{n-1}| + 1 \}$ induced by $\alpha$.
	We denote by $\pmb{|}$ the unique element in $\ncl(1)$, which plays the role of the unit for $\gamma_{\scriptscriptstyle{\mathcal{NCL}}}$.
\end{definition}

\begin{figure}
    \centering
    \includegraphics{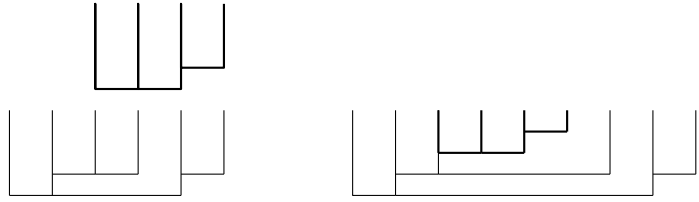}
    \caption{Example of a composition in the operad $\mathcal{NCL}$. We insert the ncl partition drawn in bold into the one drawn below it at the third input of the latter.}
    \label{fig:excomposition}
\end{figure}

To help understanding composition of ncl partitions in $\mathcal{NCL}$, we introduce orders on the blocks of a ncl partition. Pick a ncl partition $\pi$. Two blocks of $\pi$ can be \emph{nested} or \emph{linked}. The former refers to the case where a block $V$ of $\pi$ is contained disjointly in the convex hull, ${\sf Conv}(W)$, of another block $W$. We write $W \leftarrow V$ in that case,
\begin{equation*}
W \leftarrow V \Leftrightarrow V \subset {\sf Conv}(W),~V\cap W = \emptyset.
\end{equation*}

The transitive closure of this elementary relation on $\pi$ yields a partial order on $\pi$ which is denoted $\leftarrow$.

Linking refers to the case for which the minimum of a block $V$ is contained in another block $W$. We distinguish two cases,  $\min(V)=\min(W)$ and $V$ is contained in the convex hull of $W$ or $\min(V)\neq \min(W)$. We write $W \leftsquigarrow  V$ if $V$ is linked to $W$,
\begin{equation*}
W \leftsquigarrow V \Leftrightarrow (\min(V)=\min(W),~V \subset {\sf Conv}(W)) ~\mathrm{or}~ (\min(V)\in W,~\min(V)\neq\min(W)).
\end{equation*}

Again, taking the transitive closure of this elementary relation yields an order on $\pi$, that we denote $\leftsquigarrow$.


\begin{definitionproposition}[Nesting or Linking order $\Leftarrow$]
   Pick $\pi$ a non-crossing linked partition. The Nesting-or-linking order $\Leftarrow$ on $\pi$ is given by
   \begin{equation}
       V \Leftarrow W \Leftrightarrow V \leftarrow W {~\rm or~} V \leftsquigarrow W.
   \end{equation}
\end{definitionproposition}

\begin{proof}
It is clear that $\Leftarrow$ is transitive and reflexive. Pick two blocks $V$ and $W$ with
$V\Leftarrow W$ and $W \Leftarrow V$. The non-trivial cases are the following ones
\begin{enumerate}
    \item $V \leftarrow W$ and $W \leftsquigarrow V$,
    \item $V \leftsquigarrow W$ and $W \leftarrow V$.
\end{enumerate}
In the first case, from $V \leftarrow W$, the convex hull of $V$ contains the convex hull of $W$ and the two blocks are disjoint or equal. Now, $W\leftsquigarrow V$ implies either ${\sf Conv}(V) \subset {\sf Conv}(W)$ or $\min(V)=\max(W)$ or ${\sf Conv}(V)\cap {\sf Conv}(W)=\emptyset.$ The latter alternative can not hold since ${\sf Conv}(V) \subset {\sf Conv}(W)$. If $\min(V)=\max(W)$ the two blocks are not disjoint and are therefore equal. If ${\sf Conv}(W)\subset {\sf Conv}(V)$ then $V=W$ because ${\sf Conv}(V)\subset {\sf Conv}(W)$.
\end{proof}
\begin{figure}[!ht]
\centering
\includesvg[scale=0.9]{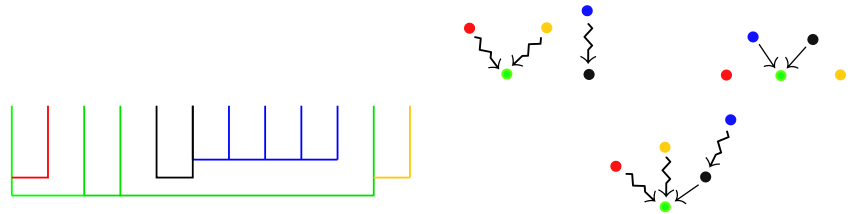}
\caption{\label{fig:hassediagram} Example of a ncl partition together with the Hasse diagrams for the order $\leftsquigarrow$ (on the top left), the order $\leftarrow$ (on the top right) and the order $\Leftarrow$ (at the bottom).}
\end{figure}

\begin{lemma}
Let $\pi$ be a ncl partition. The Hasse diagram of the order $\Leftarrow$ on $\pi$ is a forest.
\end{lemma}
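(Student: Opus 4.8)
The plan is to show that the Hasse diagram of $\Leftarrow$ on $\pi$ has no vertex with two distinct upper covers, i.e., each block $V\in\pi$ has at most one immediate successor with respect to $\Leftarrow$; since a finite poset whose Hasse diagram has this property is a forest (each connected component has a unique maximal element and is a tree), this suffices. So the main task is: \emph{given $V\in\pi$ and two blocks $W_1,W_2$ with $V\Leftarrow W_1$ and $V\Leftarrow W_2$, show that $W_1$ and $W_2$ are $\Leftarrow$-comparable.} Combined with antisymmetry (the Definition-Proposition above), comparability of all successors of $V$ forces them to form a chain, hence $V$ has a unique cover.

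First I would unwind what $V\Leftarrow W$ means concretely. By definition it is $V\leftarrow W$ (so $V\subset{\sf Conv}(W)$, $V\cap W=\emptyset$) or $V\leftsquigarrow W$ (so either $\min V=\min W$ and $V\subset{\sf Conv}(W)$, or $\min V\in W$ with $\min V\neq\min W$). In all three cases one extracts the single uniform consequence
\[
\min V \in {\sf Conv}(W).
\]
Indeed: if $V\subset{\sf Conv}(W)$ then certainly $\min V\in{\sf Conv}(W)$; and if $\min V\in W$ then $\min V\in{\sf Conv}(W)$ as well. So with $V\Leftarrow W_1$ and $V\Leftarrow W_2$ we get $\min V\in{\sf Conv}(W_1)\cap{\sf Conv}(W_2)$. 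Now I would invoke the non-crossing condition (axiom 2 in the definition of ncl partitions): for two blocks $W_1,W_2$ of a ncl partition, their convex hulls are either nested (${\sf Conv}(W_1)\subseteq{\sf Conv}(W_2)$ or vice versa) or disjoint — this is the standard consequence of non-crossing, and it is proved exactly as for ordinary non-crossing partitions (if the hulls overlapped without nesting one would produce a forbidden crossing $a<c<b<d$). Since the hulls share the point $\min V$, they are not disjoint, hence nested; say ${\sf Conv}(W_1)\subseteq{\sf Conv}(W_2)$.

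It remains to upgrade ``${\sf Conv}(W_1)\subseteq{\sf Conv}(W_2)$'' to ``$W_1\Leftarrow W_2$ or $W_1=W_2$''. If $W_1=W_2$ we are done, so assume $W_1\neq W_2$. If $W_1\cap W_2=\emptyset$, then $W_1\subset{\sf Conv}(W_1)\subseteq{\sf Conv}(W_2)$ and $W_1\cap W_2=\emptyset$, which is precisely $W_2\leftarrow W_1$; wait — I need the direction matching the claim, so let me keep both hulls and check carefully which of $W_1,W_2$ is the larger: relabel so that ${\sf Conv}(W_{\mathrm{small}})\subseteq{\sf Conv}(W_{\mathrm{big}})$, then the disjoint case gives $W_{\mathrm{big}}\leftarrow W_{\mathrm{small}}$, hence $W_{\mathrm{small}}\Leftarrow W_{\mathrm{big}}$, so $W_1,W_2$ are comparable. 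If instead $W_1\cap W_2\neq\emptyset$, then by axiom 3 of ncl partitions either $|W_1|=1$ or $|W_2|=1$ or $W_1\cap W_2=\{x\}$ with $x=\min W_1$ or $x=\min W_2$; in each subcase one checks $\min W_{\mathrm{small}}\in W_{\mathrm{big}}$ or $\min W_{\mathrm{small}}=\min W_{\mathrm{big}}$ together with ${\sf Conv}(W_{\mathrm{small}})\subseteq{\sf Conv}(W_{\mathrm{big}})$, which is exactly $W_{\mathrm{small}}\leftsquigarrow W_{\mathrm{big}}$ — again comparability. (The singleton subcases and the possibility that the shared minimum is the larger block's minimum need a quick separate look, but they fall into the same two patterns $\min\in W$ or $\min=\min$.) This closes the argument.

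The main obstacle I anticipate is purely bookkeeping: there is no single deep idea, but several small configurations (nested vs.\ linked, which block is larger, singleton blocks, shared minimum equal to $\min$ of one block or of both) must each be matched against the precise defining clauses of $\leftarrow$ and $\leftsquigarrow$, and one has to be careful that the ``$\min V\in{\sf Conv}(W)$'' reduction is genuinely the common denominator of all cases. A secondary point worth stating cleanly as a lemma is the nestedness dichotomy for convex hulls of blocks of a ncl partition, since it is used twice and follows from axiom 2 just as in the classical non-crossing setting.
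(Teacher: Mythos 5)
Your overall route is viable and genuinely different in packaging from the paper's: you reduce ``forest'' to ``every block has at most one upper cover'' and prove comparability of the blocks lying above a given one, whereas the paper assumes a minimal cycle in the Hasse diagram and rules out length $2$, length $3$, non-oriented and oriented cycles separately. The geometric heart is the same in both (two blocks that both ``contain'' a third, in the nesting-or-linking sense, must be comparable), and your packaging is arguably cleaner. Two remarks before the substantive point. First, you have silently reversed the paper's conventions: in the paper $W\leftarrow V$ means $V\subset{\sf Conv}(W)$, so $V\Leftarrow W$ puts $W$ \emph{inside} $V$; with the paper's definitions your key claim must be stated as ``if $W_1\Leftarrow V$ and $W_2\Leftarrow V$ then $W_1,W_2$ are comparable''. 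This is a consistent relabelling and harmless, but as literally written against the paper's definitions your statement is false (two blocks nested inside a common block need not be comparable). Second, your uniform consequence ``$\min(\text{inner})\in{\sf Conv}(\text{outer})$'' holds for the \emph{elementary} relations $\leftarrow$ and $\leftsquigarrow$ but not for their transitive closure $\Leftarrow$: for $\pi\supset\{\{1,3\},\{3,5\},\{5,7\}\}$ one has $\{1,3\}\Leftarrow\{5,7\}$ by transitivity, yet $5\notin{\sf Conv}(\{1,3\})$. This does not sink the argument, because a covering pair is always an elementary relation, and covers are all you need --- but you should say so explicitly rather than quantify over arbitrary order relations.

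The genuine gap is the convex-hull dichotomy. For ncl partitions it is \emph{not} true that two distinct blocks have nested or disjoint convex hulls: the non-crossing axiom only forbids $\min W_1<\min W_2<\max W_1<\max W_2$ with strict inequalities, so the hulls may meet in exactly the single point $\max W_1=\min W_2$ without being nested (e.g.\ $W_1=\{1,3\}$, $W_2=\{3,5\}$, a configuration that ncl partitions are specifically designed to allow). Your step ``the hulls share $\min V$, hence are not disjoint, hence are nested'' therefore fails precisely in this boundary configuration, which does occur (take $V=\{3,4\}$ in $\{\{1,3\},\{3,4\},\{3,5\},\{2\}\}$: it lies below both $\{1,3\}$ and $\{3,5\}$). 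The lemma survives because in that configuration $\min W_2=\max W_1\in W_1$ with $\min W_2\neq\min W_1$, so $W_1\leftsquigarrow W_2$ and the two blocks are comparable after all --- but this is an additional case that must be argued, not an instance of your dichotomy. The correct trichotomy, which you should isolate as your auxiliary lemma, is: for distinct blocks $W_1,W_2$ of a ncl partition, the hulls are disjoint, or nested, or meet in the single point $\max W_1=\min W_2$ (up to swapping indices), and in the last case the blocks are linked. With that fix, and running the argument only for covering pairs, your proof goes through. (The paper's own treatment of the non-oriented case glosses over the same boundary configuration, so you are in good company, but the case should be stated.)
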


Assume that $\tau_{\Leftarrow}(\pi)$ contains a cycle and pick one cycle $c$ with minimal length.
If the cycle $c$ has length two, there exists two blocks $V$ and $W$, $V\neq W$, of $\pi$ with $V\Leftarrow W$ and $W\Leftarrow V$. One can assume that $V \leftsquigarrow W$ and $W \leftarrow V$. In that case, $V$ is included in the convex hull of $W$, $V\cap W=\emptyset$. At the same time $V\leftsquigarrow W$ implies that $\min(W) \in V$. Both can not hold and $c$ has a length greater than $2$. Since a Hasse diagram can not contain a cycle of length three, $c$ has length greater than three.

Assume that $c$ is not oriented. The cycle $c$ contains a triple of distinct blocks $U, V, W$ such that $U \Leftarrow V\Rightarrow W$. Without loss of generality, we can suppose that $U\leftsquigarrow V$ and $V \rightarrow W$. In particular, $V$ is included in the convex hull of $W$, $V\cap W = \emptyset$ and $\min(V) \in U$. Since $\pi$ is non-crossing, this implies that $U$ is included in the convex hull of $W$. Either $\min(U) \in W$ and $W \leftsquigarrow U$, either $\min(U)\not\in W$ and $W \leftarrow U$. In all cases, $W \Leftarrow U$. This entails that $\tau_{\Leftarrow}(\pi)$ contains a cycle of length $3$, which is not possible.

Assume that $c$ is oriented, $c=(c_1,\ldots,c_n)$, $c_i$ a block of $\pi$, $c_i\neq c_j$, $1 \leq i,j\leq n$, $c_i \Rightarrow c_j$, $c_n \Rightarrow c_1$. This is a general fact that a Hasse diagram of a poset has no oriented cycles, but let us recall the argument for the Hasse diagram of the poset of non-crossing linked partitions. Notice that for all $i,j \in [n]$, it holds that ${\sf Conv}(c_{i_j}) \cap {\sf Conv}(c_{i_k})\neq \emptyset$. Either ${\sf Conv}(c_i) \subset {\sf Conv}(c_{i+1})$ for $1 \leq i\leq n$ and in that case $c_1=\cdots c_n$ in contradiction with our hypothesis, either $c_{i_0} \rightsquigarrow c_{i_0+1}$ and $\min(c_{i_0})=\max(c_{i_0+1})$. Choose $i_0$ the smallest integer in $[n]$ satisfying this property. Hence, $c_{i_0-1}\Rightarrow c_{i_0}$ but also ${\sf Conv}(c_{i_0-1}) \cap {\sf Conv}(c_{i_0+1}) = \emptyset$. This leads to a contradiction.

As it will be clear after the proofs of the two propositions below, the Hasse diagram of $\Leftarrow$ is reminiscent of a tree monomial representing a ncl partition in the operad $\mathcal{NCL}$.

In \cite{dykema2007multilinear}, Dykema introduced two "projections" from the subset of non-crossing linked partitions $\ncl_D$ to non-crossing partitions to define a partial order on $\ncl_{D}$. We define one of these projections in our setting.

\begin{definition}[Block merging]
Given a ncl partition $\pi$, we denote by $\hat{\pi}$ the non-crossing partition obtained by merging all blocks of $\pi$ with a non-empty intersection.
\end{definition}

\begin{example}
    For example, if $\pi = \{ \{1,5\},\{1,3\},\{2,4\}\}$, $\hat{\pi} = \{ \{1,3,5\},\{2,4\}\}$.
\end{example}
Pick a ncl partition $\pi$. A \emph{connected component} of $\pi$ is a subset of blocks that once merged together form a block of $\hat{\pi}$.

\begin{proposition}[Connected non-crossing linked partitions]
\label{prop:ncconnected}
	Let $n\geq 1$ be an integer and define the following subset of $\ncl(n)$ of connected ncl partitions:
	\begin{equation*}
		\ncl^{(1)}(n) = \{\pi \in \ncl(n)\ |\ \hat{\pi} = 1_{n} \}.
	\end{equation*}
	Denote by $\mathcal{NCL}^{(1)}$ the collection of connected ncl partitions.
	Then, the operadic composition $\gamma_{\scriptscriptstyle{\mathcal{NCL}}}$ restricts to $\mathcal{NCL}^{(1)}$. In addition, $(\mathcal{NCL}^{(1)},\gamma_{\scriptscriptstyle{\mathcal{NCL}}} )$ is isomorphic to the free operad on the collection $I$ of single block non-crossing linked partitions.
\end{proposition}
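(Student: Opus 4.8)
The plan is to prove the two assertions in turn. For the first, that $\gamma_{\scriptscriptstyle{\mathcal{NCL}}}$ restricts to $\mathcal{NCL}^{(1)}$, I would argue that the block-merging operation $\pi \mapsto \hat\pi$ is compatible with operadic composition in a suitable sense: if $\alpha \in \ncl(n)$ and $\beta_1,\ldots,\beta_n$ are ncl partitions, then the connected components of $\gamma_{\scriptscriptstyle{\mathcal{NCL}}}(\alpha\otimes\beta_1\otimes\cdots\otimes\beta_n)$ are obtained by gluing the connected components of the shifted $\beta_i$'s to the connected components of $\alpha$ exactly along the pattern that $\alpha$ prescribes on its inputs. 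Concretely, when $\alpha = 1_n$ is a single block and each $\beta_i$ is connected, every shifted copy $|\beta_{i-1}|+V$ shares an element with the image $\tilde\alpha$ of the one big block of $\alpha$ (since $\tilde\alpha$ touches every input slot), so after merging everything collapses to a single block; hence the composite lies in $\ncl^{(1)}$. The general bookkeeping — that $\widehat{\gamma(\alpha\otimes\beta_\bullet)}$ depends on $\alpha$ only through $\hat\alpha$ and on each $\beta_i$ only through $\hat\beta_i$ — is the technical heart of this first half, and I would phrase it as a lemma on how convex hulls of merged blocks behave under gap/shift insertion.

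For the second assertion, the isomorphism $(\mathcal{NCL}^{(1)},\gamma_{\scriptscriptstyle{\mathcal{NCL}}})\cong \mathcal{F}(I)$, I would use the universal property of the free operad: the inclusion of the collection $I$ of single-block ncl partitions into $\mathcal{NCL}^{(1)}$ induces a unique operad morphism $\Psi\colon \mathcal{F}(I)\to \mathcal{NCL}^{(1)}$, and it remains to show $\Psi$ is an isomorphism in each arity, i.e.\ bijective on the distinguished set-bases. Surjectivity amounts to showing every connected ncl partition is an iterated composition of single blocks; injectivity amounts to uniqueness of the tree monomial. The natural device for both is the Nesting-or-linking order $\Leftarrow$ introduced above: for a connected $\pi$ its Hasse diagram is a tree (not merely a forest), because connectedness forces all blocks to be comparable up to a common root — the block containing $1$, or more precisely the $\Leftarrow$-maximal block. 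I would read off from this rooted tree a canonical planar tree monomial in $\mathcal{F}(I)$ whose vertices are decorated by the blocks of $\pi$ (each of arity equal to its cardinality, matching the $|I|$-grading), and check that $\Psi$ sends it to $\pi$; this gives surjectivity. For injectivity I would show that $\Psi$ applied to any tree monomial $\tau$ lets one recover $\tau$ from the output partition, by recognising in $\gamma_{\scriptscriptstyle{\mathcal{NCL}}}$-composites exactly the nesting/linking relations that reconstruct the grafting structure — this is the step foreshadowed by the remark that ``the Hasse diagram of $\Leftarrow$ is reminiscent of a tree monomial''.

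The main obstacle I expect is the injectivity/uniqueness part: one must show that distinct planar rooted trees decorated by single blocks give distinct connected ncl partitions, which requires a clean combinatorial dictionary between grafting in $\mathcal{F}(I)$ and the nesting/linking relations in the output. The subtlety is that a block of $\pi$ can be simultaneously linked to one block and nested in another, and one must verify that the order $\Leftarrow$ (rather than $\leftarrow$ or $\leftsquigarrow$ alone) captures precisely the parent–child relation of the tree; the fact that two blocks may share only their minimal element — the feature distinguishing our ncl partitions from Dykema's — is exactly what makes the linking contribution to $\Leftarrow$ nontrivial and must be tracked carefully. I would isolate this as the key lemma: for connected $\pi$, the Hasse diagram of $\Leftarrow$ is a rooted tree, and reading its decorated-tree monomial and applying $\gamma_{\scriptscriptstyle{\mathcal{NCL}}}$ returns $\pi$, with the assignment $\pi \leftrightarrow \tau_{\Leftarrow}(\pi)$ inverse to $\Psi$ on basis elements. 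Everything else is then routine verification that $\Psi$ is an operad morphism, which is automatic from the universal property.
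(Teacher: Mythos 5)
Your proposal follows the same overall strategy as the paper: for stability under composition, observe that inserting a connected partition at an input forces its block containing the minimum to meet a block of the ambient partition; for freeness, read a tree monomial off a Hasse diagram on the blocks and prove existence and uniqueness (the paper does uniqueness by induction on the number of blocks, peeling off the root corolla $V$ of a tree monomial $\tau=V\circ(\tau_1,\dots,\tau_{|V|})$ and noting $V$ must be the minimal block). The one substantive difference is your choice of order: you build the tree from the nesting-or-linking order $\Leftarrow$, whereas the paper uses only the linking order $\leftsquigarrow$. For \emph{connected} ncl partitions the linking order already suffices: grafting $\beta$ at the $i$-th input of $\alpha$ identifies the minimum of $\beta$ with the $i$-th position of $\alpha$, so the parent--child relation of a tree monomial is precisely a linking cover relation, never a pure nesting one; connectivity guarantees that $\leftsquigarrow$ alone has a unique minimal block serving as the root. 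Your worry that one ``must verify that $\Leftarrow$ rather than $\leftsquigarrow$ captures the parent--child relation'' is therefore pointed in the wrong direction --- on connected partitions the nesting cover relations are subsumed by chains of linkings (this is essentially the content of the forest lemma), and $\Leftarrow$ only becomes genuinely necessary for the presentation of the full operad $\mathcal{NCL}$, where the nesting generators $\theta_n$ account for disconnected insertions. Your approach still works, but it imports a complication the restriction to $\mathcal{NCL}^{(1)}$ is designed to avoid. Two small imprecisions to fix: in the stability argument it is not true that \emph{every} shifted block of $\beta_i$ meets $\tilde\alpha$ --- only the block containing $\min\beta_i$ does, the others being reached through the connectivity of $\beta_i$ itself; and the root is not simply ``the block containing $1$'' (several blocks may share the minimal element $1$) but the $\leftsquigarrow$-minimal one among them.
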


\begin{proof}
Let $\alpha$ and $\beta$ be two connected ncl partitions and pick an integer $1\leq i \leq |\alpha|$. From the very definition of the operadic composition on $\mathcal{NCL}$ and as illustrated in Figure \ref{fig:excomposition}, the block of $\alpha$ containing $1$ intersects with the block of $\beta$ containing $i$ in the ncl partition $\beta\circ_{i}\alpha$. Hence, $\beta\circ_{i}\alpha$ is connected if $\alpha$ and $\beta$ are.


Then we construct a tree $\tau_{\leftsquigarrow}(\alpha)$ which is the Hasse diagram of $\leftsquigarrow$ augmented with leaves in order to interpret it as a tree monomial on (one block) ncl partitions. A vertex of $\tau_{\leftsquigarrow}(\alpha)$ corresponds to a block $V$ of $\alpha$ and has $|V|$ incoming edges. We connect the output of a vertex $V$ to an input of another block $W\in \pi$ if $V\leftsquigarrow W$ and if there is no other block $U$ such that $V \leftsquigarrow U \leftsquigarrow W$. Since $\pi$ is connected, there an unique minimal block for $\leftsquigarrow$ in $\pi$, the root of $\tau_{\leftsquigarrow}(\alpha)$.

Denote by $\mathcal{F}$ the free operad on the collection of one block ncl partitions and $p$ the canonical projection $p:\mathcal{F}\rightarrow \mathcal{NCL}$. Then $\tau_{\leftsquigarrow}(\alpha) \in \mathcal{F}$ and, clearly, $p(\tau_{\leftsquigarrow}(\alpha)) =  \alpha$.

We show next that any tree $\tau$ in $\mathcal{F}$ such that $p(\tau)=\alpha$ is equal to $\tau_{\rightarrow}(\alpha)$. We prove this fact by induction on the number of block of a ncl partition $\pi$. This is obvious if $\alpha$ has only one block.

Assume that the result holds for ncl partitions with at most $N$ blocks and pick a ncl partition $\alpha$ with $N+1$ blocks and $\tau \in \mathcal{F}$ such that $p(\tau)=\alpha$. Then $\tau$ has $N+1$ internal nodes. Write $\tau = V\circ(\tau_{1},\ldots,\tau_{|V|})$.
Notice that $V \leftsquigarrow W$ for any block $W$ in $p(\tau_i)$,  $1 \leq i \leq |V| $. Thus $V$ is the minimal block of $\alpha$ for the order $\leftsquigarrow$. We end the proof by applying the inductive hypothesis to the trees $\tau_{i}$ and ncl partitions $p(\tau_{i})$.
\end{proof}

Denote by I the collection of one block ncl partitions and by II the collection of ncl partitions defined by
\begin{equation}
    {\rm II}_n = \{\,\{\{1\}, \{2,\ldots,n-1\}\}\, \},~n\geq 2.
\end{equation}
We denote by $\theta_n$ the element of ${\rm II}_n$.

\begin{proposition}
    The operad $\mathcal{N}\mathcal{C}\mathcal{L}$ admits the following presentation
    \begin{equation}
\label{eqn:relations}
    \theta_n,\,1_n \quad \theta_n \circ_1 1_m = 1_m \circ_m \theta_n,\quad\theta_n \circ_1 \theta_m = \theta_m \circ_m \theta_n,~n,m\geq 2.
\end{equation}
\end{proposition}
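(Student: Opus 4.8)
The plan is to mimic the structure of the presentation lemma for $\mathcal{N}\mathcal{C}$ (Prop.~3.1.4 of \cite{ebrahimi2019operads}, quoted above) and of Proposition~\ref{prop:ncconnected}, combining the two generating families. First I would observe that every ncl partition $\pi$ decomposes as a union of its connected components, and that these connected components sit inside ``gaps'' determined by a non-crossing partition structure: intuitively, $\hat\pi \in \nc$ records how the connected pieces are nested, while each connected component is, by Proposition~\ref{prop:ncconnected}, a tree monomial in the one-block partitions, i.e.\ iterated compositions of the $1_n$'s. The elements $\theta_n \in \mathrm{II}_n$ are precisely the ``minimal'' non-connected generators: $\theta_n = \{\{1\},\{2,\ldots,n-1\}\}$ is a single nested pair with no linking. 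So the claim is that $\{1_n\}_{n\geq 1} \cup \{\theta_n\}_{n\geq 2}$ generates $\mathcal{NCL}$ and the listed relations are a complete set.

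The key steps, in order: (1) \emph{Generation.} Show by induction on $|\pi|$ (the degree) or on the number of blocks that every $\pi \in \ncl(n)$ is an iterated partial composition of $1_m$'s and $\theta_m$'s. I would peel off the block $V_0$ of $\pi$ containing $1$: this block is either the unique block of its connected component (pure nesting, handled by a $\theta$) or is linked to others. Using the orders $\leftsquigarrow$ and $\leftarrow$ introduced above --- equivalently the forest structure of $\Leftarrow$ --- one writes $\pi$ as a top operator (either $1_{|V_0|}$, contributing the linking at the minimum, or $\theta$-type, contributing a nesting) composed with strictly smaller ncl partitions stuffed into the appropriate inputs, then applies the inductive hypothesis. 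This is essentially reading off the Hasse diagram of $\Leftarrow$ as a tree monomial, exactly as the remark after the forest lemma anticipates. (2) \emph{Relations hold.} Verify directly from Definition~\ref{def:operadnoncrossinglinkedpartition} that $\theta_n \circ_1 1_m = 1_m \circ_m \theta_n$ and $\theta_n \circ_1 \theta_m = \theta_m \circ_m \theta_n$ for $n,m\geq 2$ (and note that the first of these, with $\theta$ replaced by $1$, is the known $\mathcal{N}\mathcal{C}$-relation $1_n\circ_1 1_m = 1_m \circ_m 1_n$, which must also be included --- I would double-check whether \eqref{eqn:relations} as displayed is missing this or intends $1$ among the ``$\theta$''s). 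These are short computations: both sides are the ncl partition on $\llbracket 1,n+m-1\rrbracket$ with a nested/linked pair at position $1$ inside a block covering $\{2,\ldots\}$. (3) \emph{Completeness.} Show that any two tree monomials in the generators mapping to the same ncl partition are related by the listed relations. Here I would argue that the relations suffice to bring any tree monomial into a normal form matched to the forest of $\Leftarrow$: the commutation relations let one reorder compositions of generators that act on ``independent'' gaps (disjoint convex hulls) or on nested-but-noninteracting positions, which is exactly the ambiguity in the generation step.

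\textbf{Main obstacle.} The hard part will be step (3), completeness of the relations --- in particular proving that the two families of commutation relations generate \emph{all} coincidences among tree monomials. The subtlety is that $\mathcal{NCL}$ is not free (unlike $\mathcal{NCL}^{(1)}$), so the canonical projection from the free operad on $\mathrm{I}\oplus\mathrm{II}$ has a nontrivial kernel, and one must show this kernel is the operadic ideal generated by \eqref{eqn:relations} and nothing more. I would handle this by a rewriting/confluence argument: orient the relations (e.g.\ always move $1_m$ or $\theta_m$ toward the root when it acts at the last input) and show the resulting rewriting system is terminating and confluent, with normal forms in bijection with $\ncl(n)$ via $p$. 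Termination is easy (a suitable monomial order on trees decreases); local confluence requires checking the critical pairs, which reduce to the overlaps of two generators at three adjacent inputs --- precisely the configurations the relations describe. A cleaner alternative, if available, is a dimension/counting argument: exhibit an explicit normal form for each $\pi$ read off from the $\Leftarrow$-forest, show every tree monomial rewrites to it using \eqref{eqn:relations}, and conclude that $p$ restricted to normal forms is a bijection. I expect the write-up to follow this second route, leaning heavily on the forest lemma and Proposition~\ref{prop:ncconnected} already in hand.
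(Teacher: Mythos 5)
Your overall strategy is the one the paper follows: it verifies the relations, and for completeness it identifies the quotient of the free operad on ${\rm I}\cup{\rm II}$ with the set $\tilde{\mathcal{F}}$ of tree monomials in normal form (those in which every ${\rm II}$-decorated vertex has its leftmost input a leaf), and then shows by induction on the number of blocks --- peeling off the generator at the root, exactly as you describe --- that every ncl partition is represented by a unique monomial in $\tilde{\mathcal{F}}$. So your ``cleaner alternative'' in step (3) is precisely the written proof; the explicit rewriting/confluence analysis you anticipate as the main obstacle is not carried out in detail, the identification of the quotient with $\tilde{\mathcal{F}}$ being treated as immediate from the form of the relations.

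One concrete error: your parenthetical claim that the relation $1_n\circ_1 1_m=1_m\circ_m 1_n$ ``must also be included'' is wrong, and including it would present a strictly smaller operad. In $\mathcal{NCL}$ the inputs are the \emph{elements} of $\llbracket 1,n\rrbracket$, not the gaps, and composition at an input links the outer block to the inserted block at the first element of the inserted range. A direct computation from Definition~\ref{def:operadnoncrossinglinkedpartition} gives
$1_n\circ_1 1_m=\{\{1,\ldots,m\},\{1,m+1,\ldots,m+n-1\}\}$ (the shared point is the minimum of both blocks), whereas
$1_m\circ_m 1_n=\{\{1,\ldots,m\},\{m,m+1,\ldots,m+n-1\}\}$ (the shared point is the minimum of only the inserted block); these are distinct ncl partitions. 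This is consistent with Proposition~\ref{prop:ncconnected}, which you yourself invoke and which states that the $1_n$'s generate a \emph{free} suboperad, hence satisfy no relations at all. The relation $\mathbb{1}_n\circ_1\mathbb{1}_m=\mathbb{1}_m\circ_m\mathbb{1}_n$ holds only in the gap-insertion operad $\nco$, where it reflects the indistinguishability of placing two disjoint blocks side by side; the linking structure of $\mathcal{NCL}$ is exactly what breaks it. With that correction, the rest of your plan goes through as in the paper.
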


\begin{proof}
    Denote by $\mathcal{F}$ the free operad on the collection ${\rm I}\cup {\rm II}$. The quotient of the free operad $\mathcal{F}$ by the relations \eqref{eqn:relations} is isomorphic to the collection $\tilde{\mathcal{F}}$ of trees with ${\rm I}\cup {\rm II}$ decorated internal vertices meeting the following constraint. If $v$ is an internal vertex of such a tree $\tau$ decorated with a ncl partition in the set II then its leftmost input is a leaf of $\tau$. We show then that any ncl partition can uniquely be written as a monomial in $\tilde{\mathcal{F}}$. The proof is done by induction on the number of blocks of a ncl partition.
    First, the result is trivial for one block ncl partitions. We assume the result to hold for any ncl partition with at most $N$ blocks and pick a ncl partition $\pi \in \ncl(p)$, $p\geq 2$ with $N+1$ blocks.
    Assume first there exists a tree $\tau \in \tilde{\mathcal{F}}$ such that $p(\tau)=\pi$ and write $\tau = V \circ (\tau_1,\tau_2,\ldots,\tau_{|V|})$, with $V$ a ncl partition in $I\sqcup II$. Follow two cases,
    \begin{enumerate}
        \item If $V \in {\rm II}$, then $\tau_1$ is the root tree and $\{1\} \in \pi$. In that case, $V$ is equal to $\theta_n$ where $n$ is the cardinal of the block $\{2<i_2 < \cdots <i_n-1\}$ containing $2$. If we let $\pi_j$ be the restriction of $\pi$ to the interval $\llbracket i_j+1,i_{j+1}-1\rrbracket$ with the convention that $\pi_j=\{1\}$ if $i_j +1 = i_{j+1}$ and $i_{n}=p$ then $p(\tau_j)=\pi_j$. The proof follows by applying the induction hypothesis to the ncl partitions $\pi_j$.
        \item If $V\in I$ then $V$ is equal to the block that contain $1$. The proof follows using the same line of arguments exposed in the previous case.
    \end{enumerate}
\end{proof}

To construct a tree monomial on ncl partitions in ${\rm I}\cup {\rm II}$ representing a ncl partition $\pi$, we start from the Hasse diagram $\tau_{\Leftarrow}(\pi)$ of $\pi$ for the order $\Leftarrow$.
First,  we augment $\tau_{\Leftarrow}$ with as many leaves as needed for the degree of each corolla in  $\tau_{\Leftarrow}$ to match the degree of the block of $\pi$ it is decorated with. We place the additional leaves so that if $W \in \pi$ meets $V$ at its $i^{th}$ element, the corolla $W$ is connected to the $i^{th}$ input of the corolla $V$. Do the same if $W$ is nested in $V$: if $W$ is contained between the $i^{th}$ element and the $(i+1)^{th}$ element, the corolla $W$ is connected to $i^{th}$ output of the corolla $V$. From this process results a forest of blocks of $\pi$. Then, follow this rules. Firstly,
\begin{enumerate}
    \item If $\{1\} \in \pi$, erase the corolla representing this block and decorate the corolla representing the block of $\pi$ containing $2$ by $\theta_n$ where $n$ is degree of this block minus one.
    \item If $\{1\}\not\in \pi$, decorate the corolla representing the block containing one with the corresponding block in ${\rm I}$.
\end{enumerate}
Secondly, if $V \leftsquigarrow W$ decorate the corolla representing $W$ with the corresponding element in I and if $V \leftarrow W$, decorate the corresponding corolla with $\theta_{|W|+1}$ and a leftmost leaf.
Finally, connect the root of each trees to the rightmost leaf of the previous one (if the forest is read from left to right).

\begin{corollaire}
The morphism of collections
\begin{equation}
    \begin{array}{cccc}
         j:& \mathbb{1} & \rightarrow & {\rm I} \\
         & \mathbb{1}_n &\mapsto &\theta_n
    \end{array}
\end{equation}
extends to an operadic morphism between the gap insertion operad and the linking-and-nesting operad.
\end{corollaire}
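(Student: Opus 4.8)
The plan is to play the two presentations established above off against one another. By the presentation of $\mathcal{N}\mathcal{C}$ recalled above (from \cite{ebrahimi2019operads}), the operad $\mathcal{N}\mathcal{C}$ is generated by the elements $\mathbb{1}_n$ subject only to the relations $\mathbb{1}_m\circ_m\mathbb{1}_n=\mathbb{1}_n\circ_1\mathbb{1}_m$, $m,n\geq 1$. Consequently, for any operad $\mathcal{Q}$, a morphism of collections from the generating collection $\mathbb{1}$ to $\mathcal{Q}$ extends to an operad morphism $\mathcal{N}\mathcal{C}\to\mathcal{Q}$ if and only if the images of the $\mathbb{1}_n$ satisfy those same relations, and then the extension is unique. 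Taking $\mathcal{Q}=\mathcal{NCL}$ and $j(\mathbb{1}_n)=\theta_n$, the whole statement therefore reduces to the single point that $\theta_m\circ_m\theta_n=\theta_n\circ_1\theta_m$ holds in $\mathcal{NCL}$ for all $m,n\geq 1$.

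To verify this, note first that $\mathbb{1}_1=\{\emptyset\}$ is the unit of $\mathcal{N}\mathcal{C}$, so a morphism of unital operads must send it to the unit $\pmb{|}\in\ncl(1)$ of $\mathcal{NCL}$; with this convention the relations with $m=1$ or $n=1$ become instances of the unit axiom on both sides and hold automatically. For $m,n\geq 2$, the generator $\mathbb{1}_n$ and its image $\theta_n$ have the same arity $n$ (recall $\theta_n\in\ncl(n)$), so that $\circ_m$ is insertion at the last input and $\circ_1$ insertion at the first input on both sides. The identity to be checked is thus precisely the second family of defining relations in the presentation of $\mathcal{NCL}$ proved just above, namely $\theta_n\circ_1\theta_m=\theta_m\circ_m\theta_n$ for $n,m\geq 2$, which holds in $\mathcal{NCL}$ by that presentation. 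Hence $j$ extends (uniquely) to an operad morphism $\hat{\jmath}\colon\mathcal{N}\mathcal{C}\to\mathcal{NCL}$, as claimed.

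If one prefers not to invoke the presentation of $\mathcal{NCL}$, the identity $\theta_m\circ_m\theta_n=\theta_n\circ_1\theta_m$ can instead be checked directly from the definition of $\gamma_{\scriptscriptstyle{\mathcal{NCL}}}$ in Definition \ref{def:operadnoncrossinglinkedpartition}, by writing out and comparing the two block systems on $\llbracket 1,m+n-1\rrbracket$. In either case the verification is short, and there is no serious obstacle here: the only point requiring care is the bookkeeping of arities and partial-composition indices, so as to be sure that the unique defining relation of the gap-insertion operad is transported by $j$ onto exactly the $\theta$--$\theta$ relation of the nesting-or-linking operad and onto nothing more. Once this matching is set up, the extension exists for free and no further computation is needed.
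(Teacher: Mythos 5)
Your argument is correct and is exactly the one the paper intends: the corollary is stated immediately after the presentation of $\mathcal{NCL}$ precisely so that it follows by comparing the single defining relation $\mathbb{1}_m\circ_m\mathbb{1}_n=\mathbb{1}_n\circ_1\mathbb{1}_m$ of the gap-insertion operad with the relation $\theta_m\circ_m\theta_n=\theta_n\circ_1\theta_m$ in the presentation of the nesting-or-linking operad, together with the unit being sent to the unit. Your care with the $m=1$ or $n=1$ cases and with the arity bookkeeping is exactly the right level of detail; nothing is missing.
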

\begin{figure}[!ht]
\includesvg[scale=0.8]{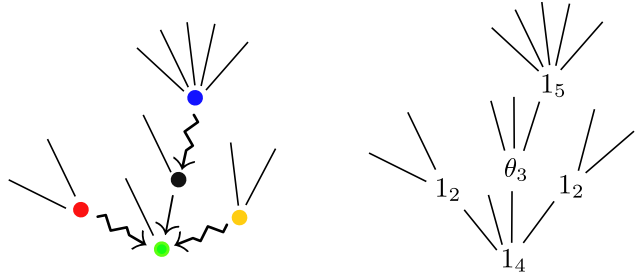}
\caption{\label{fig:monomial} Monomial representing the ncl partition drawn in Fig. \ref{fig:hassediagram}.}
\end{figure}
\medskip

Let us provide reason for introducing an operadic structure on non-crossing linked partitions. The $T$-transform of a random variable $a$ (in an operator-valued probability space $(\mathcal{A},\phi,B)$) with $\phi(a)=1_B$ is a sequence of multilinear maps on $B$,
\begin{equation}
    t_{a}(n): B^{\otimes n} \rightarrow B
    \qquad n\geq 0,
\end{equation}
with $t_a(0)=1_B$. This sequence can be inductively defined using the following formula relating the moments of $a$, understood as multilinear maps on $B$, to the sequence $(t_a(n))_{n\geq 0}$,
\begin{align*}
    \phi(b_{1}ab_{2}\cdots ab_{n})  &= \hspace{-0.5cm} \sum_{\pi \in \ncl_{D}(n)}\hspace{-0.2cm} t_{a}(\pi)(b_{1},\ldots,b_{n})\\
    &= t_{a}(n)(b_1,\ldots,b_n)+\hspace{-0.2cm}\sum_{\substack{\pi\in\ncl_{D}(n) \\ \pi\neq 1_n}}t_{a}(\pi)(b_{1},\ldots,b_{n}),
\end{align*}
with $b_1,\ldots,b_n \in B$. The function $\ncl_{D}(n) \ni \pi \mapsto t_a(\pi) \in {\rm End}_{B}$ is defined in \cite{dykema2007multilinear}, equations (67)-(70). Alternatively, by using the operadic structure we defined on non-crossing linked partitions, $t_{a}(\pi)(b_1,\ldots,b_{n})$ can be seen to match the value
$$
\hat{\sf T}_{a}(\pi)(b_1,\ldots,b_{n})
$$
of the operadic morphism $\hat{\sf T}_{a} : \mathcal{NCL} \rightarrow \mathrm{End}_{B}$ prescribed by the following equations
\begin{equation}
    \hat{\sf T}_{a}(1_{n})(b_{1},\ldots,b_{n-1}) = b_{1}\cdot t_{a}(n-1)(b_{2},\ldots,b_{n})
    \qquad n\geq 2.
\end{equation}

\subsection{Operads with multiplication}
\label{ssec:operadwithmult}


In this section we introduce the concept of multiplication in an operad, which is a distinguished operator of arity $2$. All operads we introduced so far admit such a multiplication.
We begin with the definition of \emph{brace algebra} and refer to \cite{aguiar2004infinitesimal}, \cite{ronco2001milnor}, and  \cite{chapoton2002theoreme} for details.
Recall that $T(A)$ denotes the vector space of all non-commutative polynomials with entries in $A$. We shall use word notation, $a_1\cdots a_n = a_1 \otimes \cdots \otimes a_n$, $a_i\in A$, for elements in $T(A)$.

\begin{definition}[Brace algebras] A brace algebra is a tuple $(A, \{-;-\})$, where $A$ is a vector space and $\{-;-\}$ is a linear map from $A \otimes T(A)$ to $A$ such that
	\begin{align}
		\label{eqn:bracealgeqn}
		\{ \{x; y_{1} \cdots y_{n}\} &; z_{1}\cdots z_{p}\} \\
		&= \sum_{I,J} \{x; z_{1} \cdots z_{i_{1}} \{y_{1}; z_{i_{1}+1} \cdots z_{j_{1}} \}z_{j_{1}+1}\cdots 
		\{y_{n}; z_{i_{n}+1} \cdots z_{j_{n}}\}z_{j_{n}+1} \cdots z_{p}\} \nonumber
	\end{align}
	with $x,y_{1},\ldots,y_{n}, z_{1},\ldots,z_{p} \in A$. The above sum runs over tuples $I=(i_1,\ldots,i_n)$ and $J=(j_1,\ldots,j_n)$ with
	$$0 \leq i_1 \le j_1 \le i_2 \le j_2 \le \cdots \le i_n \le j_n \le p.$$
\end{definition}
Let $\mathcal{P}=(P,\gamma_{\scriptscriptstyle{\mathcal{P}}})$ be an operad with $P(1)=\mathbb{C}\cdot {\rm id}$ and operadic composition $\gamma_{\scriptscriptstyle{\mathcal{P}}}$. It naturally yields a brace algebra structure on the collection $P$ (more precisely on the direct sum of the vector spaces $P(n),n\geq 1$),
\begin{align}
	\label{eqn:operadtogerstenhaber}
	\{x; y_{1}\cdots y_{n}\} &= \sum \gamma_{\scriptscriptstyle{\mathcal{P}}}(x\otimes\textrm{id}\otimes \cdots \otimes y_{1}\otimes\textrm{id} \otimes\cdots \otimes y_{2} \otimes\textrm{id}\otimes \\
	&\qquad\ \qquad\ \otimes y_{n-1}\otimes \textrm{id}\otimes \cdots\otimes \cdots \otimes y_{n} \otimes \textrm{id} \otimes \cdots \otimes\textrm{id}), \nonumber
\end{align}
where the sum runs over all possible ways to branch the operators $y_1, \ldots, y_n$ into $x$ while maintaining their linear order.
Equation \eqref{eqn:bracealgeqn} follows from associativity of $\gamma_{\scriptscriptstyle{\mathcal{P}}}$.

\begin{definition}[Multiplication in an operad]
An operator $m \in P(2)$ of arity $2$ satisfying
\begin{equation}
	\gamma_{\scriptscriptstyle{\mathcal{P}}}(m\otimes \operatorname{id}\otimes m)
	= \gamma_{\scriptscriptstyle{\mathcal{P}}}(m\otimes m \otimes   \operatorname{id}).
\end{equation}
is called a \emph{multiplication}.
\end{definition}
We now browse through some examples, found among the operads we introduced in the previous sections.
 \begin{example}
	 \begin{enumerate}[1.]
		 \item The operad $\nco$ of non-crossing partitions is an operad with multiplication $m= \pmb{|}$. In fact, $\gamma_{\scriptscriptstyle{\mathcal{NC}}}(\pmb{|} \otimes \{\emptyset\} \otimes \pmb{|})$ and $\gamma_{\scriptscriptstyle{\mathcal{NC}}}(\pmb{|} \otimes \pmb{|} \otimes \{\emptyset\})$ are both equal to the partition $\pmb{|\ |}$.
		 \item The operad $\mathrm{End}_B$ of multilinear maps on an algebra $\left(B, \mu_{B}\right)$ (with multiplication $\mu_B$), is an operad with multiplication $m=\mu_{B}$.
		 \item In the nesting-or-linking operad of non-crossing linked partitions we have two operators of arity two, the partitions $\pmb{|}~\pmb{|}=\{ \{1\},\{2\}\}$ and the partition $\pmb{\sqcup} = \{ \{1,2\} \}$. Only $\pmb{|}\,\pmb{|}$ is a multiplication, since $\pmb{|}\,\pmb{|} \circ_1 \pmb{|}\,\pmb{|} = \pmb{|}\,\pmb{|} \circ_2 \pmb{|}\,\pmb{|} = \pmb{|}\,\pmb{|}\,\pmb{|}$. For $\pmb{\sqcup}$, $\pmb{\sqcup}\circ_1\pmb{\sqcup} = \{ \{1,2\}, \{1,3\}\}$ and $\pmb{\sqcup}\circ_2\pmb{\sqcup} = \{\{1,2\},\{2,3\}\}$.
		 \item In the word-insertions operad $\mathcal{W}$ any letter $a \in \mathcal{A}$ provides an operator of arity two which is a multiplication, since $a\circ_1 a = aa = a\circ_2 a$.
	 \end{enumerate}
 \end{example}

\begin{remark}
In a graded context, that is, if the general term in the summation on the right hand side of \eqref{eqn:operadtogerstenhaber} is multiplied by a sign, existence of a multiplication in an operad provides a rich structure on the collection $P$ as observed by Gerstenhaber and Voronov in \cite{gerstenhaber1994homotopy}. In particular, the multiplication $m$ together with a certain \emph{graded pre-Lie product} yields a differential complex $(\mathcal{P},d)$.
\end{remark}

\medskip

We now assume that $\mathcal{P}$ is an operad with multiplication $m$. Following the above remark (that isno signs are involved), the non-graded pre-Lie product, denoted $\lhd$, is defined by:
\begin{equation*}
	x \lhd y = \{x;y\}
	\qquad x,y \in \mathcal{P}.
\end{equation*}
In fact, $x \lhd (y \lhd z) - (x \lhd y) \lhd z$ is symmetric in $y$ and $z$. We denote by $[\,-,\,-]$ the commutator bracket induced by the pre-Lie product $\lhd$:
\begin{equation*}
	[x,y] = x \lhd y - y \lhd x
	\qquad x,y \in \mathcal{P},
\end{equation*}
which satisfies the Jacobi identity.
\def\fsp{{\mathbb{C}[[\mathcal{P}]]}}
In \cite{chapoton2007relating}, the authors define a product, denoted $\times$, on the vector space $\fs$ of formal series on operators in $\mathcal{P}$,
\begin{equation}
    \fs = \displaystyle\prod_{n\geq 1} P(n),
\end{equation}
defined for two series $x,y \in \fs$ in terms of braces
\begin{equation*}
	x \times y  = \sum_{n \geq 1} \{ x_{n} ; y_{m_{1}}\cdots y_{m_{n}}\}.
\end{equation*}

\begin{proposition}[see Prop.~4.1 in \cite{chapoton2007relating}]
	$(\fs, \times, \mathrm{id})$ is an associative monoid. Moreover, $x \in \fs$ is an invertible element if and only if $x_{1} \neq 0$.
\end{proposition}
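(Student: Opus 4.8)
The plan is to establish the two assertions separately: first associativity of $\times$, then the characterization of invertibility. For associativity, I would expand both $(x\times y)\times z$ and $x\times(y\times z)$ using the defining formula and reduce the identity to the brace relation \eqref{eqn:bracealgeqn}. Concretely, $(x\times y)\times z$ involves terms of the form $\{\{x_n; y_{m_1}\cdots y_{m_n}\}; z_{k_1}\cdots z_{k_p}\}$, summed over all index choices, while $x\times(y\times z)$ involves $\{x_n; (y\times z)_{\ell_1}\cdots(y\times z)_{\ell_n}\}$, and each factor $(y\times z)_{\ell_i}$ is itself a sum $\sum\{y_{a}; z_{b_1}\cdots z_{b_s}\}$. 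Substituting these in and expanding, one sees that the right-hand side of the brace axiom is precisely the combinatorial bookkeeping that matches interleaving the $z$'s among the $y$'s branched into $x_n$. Since the collection $P$ carries a brace algebra structure coming from $\gamma_{\scriptscriptstyle{\mathcal{P}}}$ (equation \eqref{eqn:operadtogerstenhaber}, which holds by associativity of $\gamma_{\scriptscriptstyle{\mathcal{P}}}$), the brace identity applies termwise and the two expansions agree. One subtlety worth flagging: because $P(1)=\mathbb{C}\cdot\mathrm{id}$, the series live in $\prod_{n\ge 1}P(n)$ and one must check the sums defining $\times$ and its iterates are well-defined, i.e., each homogeneous component of a product is a finite sum; this follows since $\{x_n; y_{m_1}\cdots y_{m_n}\}$ lands in arity $m_1+\cdots+m_n - n + 1 \ge n$ when all $m_i\ge 1$ (using that the inputs are of arity $\ge 1$ and each brace insertion raises arity), so only finitely many $n$ contribute to a fixed target arity. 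That $\mathrm{id}$ is a two-sided unit is immediate: $\{x_n;\mathrm{id}\cdots\mathrm{id}\}=x_n$ and $\{\mathrm{id}; y\}=y$, with all other brace terms involving $\mathrm{id}$ in the left slot vanishing by unitality of $\gamma_{\scriptscriptstyle{\mathcal{P}}}$.

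For the invertibility criterion, I would argue in both directions. If $x$ is invertible with inverse $y$, then looking at the arity-$1$ component of $x\times y = \mathrm{id}$ gives $\{x_1; y_1\} = \mathrm{id}$, i.e. $x_1 y_1 = \mathrm{id}$ in $P(1)=\mathbb{C}\cdot\mathrm{id}$, forcing $x_1\neq 0$. Conversely, suppose $x_1\neq 0$; after rescaling we may assume $x_1=\mathrm{id}$, so $x = \mathrm{id} + x_{\ge 2}$ where $x_{\ge 2}$ has no arity-$1$ part. I would construct the inverse $y$ recursively by arity: set $y_1=\mathrm{id}$, and having determined $y_1,\ldots,y_{n-1}$, read off the arity-$n$ component of $x\times y=\mathrm{id}$, which has the form $y_n + (\text{expression in } x_2,\ldots,x_n, y_1,\ldots,y_{n-1})$, and solve for $y_n$. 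The key point making this recursion valid is the arity estimate above: in the expansion of $(x\times y)_n = \sum \{x_k; y_{m_1}\cdots y_{m_k}\}$ with $m_1+\cdots+m_k = n+k-1$, the only term containing $y_n$ itself is $k=1$, $\{x_1;y_n\}=\{\mathrm{id};y_n\}=y_n$, and every other term uses only $y_j$ with $j<n$ and $x_k$ with $k\le n$. This produces a left inverse; the same construction on the other side produces a right inverse, and associativity forces them to coincide, so $x$ is invertible.

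The main obstacle I anticipate is the associativity verification: correctly matching the index ranges in the iterated brace expansion against the constraint $0\le i_1\le j_1\le\cdots\le i_n\le j_n\le p$ in \eqref{eqn:bracealgeqn} requires care with the combinatorics of how the $z$-letters get distributed — both those inserted "directly" into $x_n$ and those inserted into the $y$-factors. Everything else (unitality, the arity bookkeeping, the recursive inversion) is routine once the arity-raising estimate is in hand. Since the statement is quoted as Prop.~4.1 of \cite{chapoton2007relating}, I would in fact keep the exposition brief, carry out the arity estimate explicitly since it is the load-bearing observation for both parts, and refer to \cite{chapoton2007relating} for the detailed brace manipulation.
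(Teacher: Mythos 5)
The paper gives no proof of this proposition --- it is imported verbatim from the cited reference --- so there is no in-paper argument to compare against; your reconstruction is the standard one and is essentially sound. Associativity does reduce to the brace relation \eqref{eqn:bracealgeqn}, and in fact to a degenerate case of it: since the $n$ factors $y_{m_1},\ldots,y_{m_n}$ occupy \emph{all} $n$ inputs of $x_n$, the product $\times$ is plain operadic composition, so only the instance of \eqref{eqn:bracealgeqn} expressing associativity of $\gamma_{\scriptscriptstyle{\mathcal{P}}}$ is needed, and the combinatorial matching of the $z$-letters that you flag as the main obstacle is correspondingly tame. The recursive construction of the inverse is the right mechanism for the second assertion.

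Two points deserve correction. First, the arity count you call load-bearing is off: because every input of $x_n$ is filled, $\{x_n; y_{m_1}\cdots y_{m_n}\}$ has arity $m_1+\cdots+m_n$, not $m_1+\cdots+m_n-n+1$. The conclusions you draw survive with the corrected count: the arity-$n$ component of $x\times y$ is the finite sum over $k\le n$ and $m_1+\cdots+m_k=n$, and for $k\ge 2$ each $m_i\le n-1$, so $y_n$ occurs only in the $k=1$ term $\{x_1;y_n\}$. Second, the reduction ``after rescaling we may assume $x_1=\mathrm{id}$'' is not innocent, since $\times$ is linear only in its left argument: replacing $x$ by $c^{-1}x$ turns $x\times y=\mathrm{id}$ into $(c^{-1}x)\times y=c^{-1}\,\mathrm{id}$, which is no longer the unit. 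It is cleaner to run the recursion directly with $x_1=c\,\mathrm{id}$, $c\neq 0$: the arity-$n$ component of $x\times y=\mathrm{id}$ reads $c\,y_n+(\text{terms involving only } y_j,\ j<n)=\delta_{n,1}\,\mathrm{id}$, which determines $y_n$ uniquely. With these repairs the argument is complete.
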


The above proposition implies that the subset $G \subset \fs$ defined by
\begin{equation*}
	G = \Big\{ x \in \fs : x_{1} = \mathrm{id} \Big\}
\end{equation*}
endowed with the composition $\times$ is a group. The multiplication $m$ on $\mathcal{P}$ yields another group product that we define in Subsection \ref{ssec:freeproductoperadmultiplication}. But first, the multiplication $m$ yields a bilinear non-unital associative product $\cdot$ defined by
\begin{equation}
	x \cdot y = \{m;xy\}
	\qquad x,y \in \mathbb{C}[[\mathcal{P}].
\end{equation}
\begin{proposition}
	$(\mathcal{P}, \{\,-;-\, \}, \cdot)$ is a Gerstenhaber--Voronov algebra, which means that
	\begin{equation}
		\label{eqn:distributivity}
		\{x\cdot y; z_{1} \cdots z_{p}\} = \sum_{k=0}^{p} \{x;z_{1}\cdots z_{k}\} \cdot \{y;z_{k+1}\cdots z_{p} \}
		\qquad x,y,z_{1},\ldots,z_{p} \in \fs.
	\end{equation}
\end{proposition}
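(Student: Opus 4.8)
The plan is to prove the identity \eqref{eqn:distributivity} directly from the brace algebra axiom \eqref{eqn:bracealgeqn}, using the definition $x\cdot y = \{m; xy\}$ and the multiplication property $\gamma_{\scriptscriptstyle{\mathcal{P}}}(m\otimes\operatorname{id}\otimes m) = \gamma_{\scriptscriptstyle{\mathcal{P}}}(m\otimes m\otimes\operatorname{id})$ rephrased in brace notation as $\{m; \{m;y\}\,z\} = \{m; y\,z\} = \{m; y\,\{m;z\}\}$ (this is what associativity of $\cdot$ amounts to, and it is needed implicitly to make the right-hand side well-defined). The left-hand side of \eqref{eqn:distributivity} is $\{\{m; xy\}; z_1\cdots z_p\}$, which is exactly the left-hand side of the brace axiom \eqref{eqn:bracealgeqn} with the outer element being $m$, the inner two elements being $x$ and $y$, and the letters being $z_1,\ldots,z_p$.

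First I would apply \eqref{eqn:bracealgeqn} with $n=2$, $y_1 = x$, $y_2 = y$: this expands $\{\{m; xy\}; z_1\cdots z_p\}$ as a sum over pairs of intervals $0\le i_1\le j_1\le i_2\le j_2\le p$ of terms of the form $\{m;\, z_1\cdots z_{i_1}\{x; z_{i_1+1}\cdots z_{j_1}\} z_{j_1+1}\cdots z_{i_2}\{y; z_{i_2+1}\cdots z_{j_2}\} z_{j_2+1}\cdots z_p\}$. Since $|m|=2$, the brace $\{m; w_1\cdots w_k\}$ vanishes unless exactly two of the $w$'s are branched into the two inputs of $m$ and the remaining ones are distributed as identities — but because $P(1) = \mathbb{C}\cdot\operatorname{id}$, inserting $\operatorname{id}$ at an input does nothing, so in fact $\{m; w_1\cdots w_k\}$ is zero unless $k$ is such that the word splits cleanly; concretely $\{m; w_1\cdots w_k\}$ picks out only those arrangements that land one consecutive chunk of the $w$'s into the first slot of $m$ and the complementary chunk into the second slot. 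This forces all but two of the bracketed/plain letters appearing inside the outer $\{m;-\}$ to be identities. The key combinatorial point is then: the only surviving terms are those where the word $z_1\cdots z_{i_1}\{x;\cdots\}\cdots\{y;\cdots\}\cdots z_p$ inside $\{m;-\}$ has the shape "(one block) (another block)", i.e. where $i_1 = 0$, the whole prefix up to and including $\{x;z_1\cdots z_{j_1}\}$ goes into the first input of $m$ with nothing else, and $i_2 = j_1 = k$ for some $k$, so $\{y; z_{k+1}\cdots z_{j_2}\}$ with $j_2 = p$ goes into the second input. This collapses the double sum over $(i_1,j_1,i_2,j_2)$ to a single sum over $k$ with $i_1 = 0$, $j_1 = i_2 = k$, $j_2 = p$, giving exactly $\sum_{k=0}^p \{m; \{x; z_1\cdots z_k\}\,\{y;z_{k+1}\cdots z_p\}\} = \sum_{k=0}^p \{x;z_1\cdots z_k\}\cdot\{y;z_{k+1}\cdots z_p\}$.

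I would then present the argument that the vanishing/collapsing is justified: an operator of arity $2$ composed (via the full operadic composition that defines the brace) with a word of operators produces zero unless the word is a shuffle where everything is identity except at exactly the two slots, and since $\operatorname{id}$ is a genuine unit and $P(1)$ is one-dimensional, the term $\{m; a_1\cdots a_r\}$ is nonzero only when exactly two of the $a_i$ are non-identity (the rest being $\operatorname{id}$), in which case it equals $\gamma_{\scriptscriptstyle{\mathcal{P}}}(m\otimes a_{i}\otimes a_{j})$ for those two. Applied to our expansion, each summand's internal word has at most two "heavy" pieces ($\{x;\cdots\}$ and $\{y;\cdots\}$) and some $z$-letters; if any $z_\ell$ sits outside both brackets it contributes a heavy letter, killing the term unless there is at most one bracket, which is a degenerate boundary case — handling $j_1 = i_1$ (so $\{x;\varnothing\} = x$) and similarly for $y$ is exactly what produces the boundary terms $k=0$ and $k=p$ of the final sum, so nothing is lost. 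The main obstacle I anticipate is making this "only two heavy letters survive, and they must be consecutive and complementary" bookkeeping fully rigorous without drowning in notation — in particular being careful that the convention $\{x;\varnothing\}=x$ and the one-dimensionality of $P(1)$ are used correctly at the endpoints — but conceptually it is just the arity-$2$ constraint forcing the brace axiom's double interval sum to degenerate to a single cut point, and I would state it as a short lemma ("for $m\in P(2)$ and $P(1)=\mathbb{C}\operatorname{id}$, $\{m; a_1\cdots a_r\}$ is supported on words with exactly two non-identity letters, and equals the two-input composition there") and then the whole proposition is a one-line consequence.
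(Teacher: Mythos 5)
Your derivation is correct in substance, and it is worth noting that the paper itself states this proposition without proof (it is the classical Gerstenhaber--Voronov fact for an operad with multiplication), so your argument supplies what the paper leaves implicit. The core of your proof is exactly the right mechanism: writing $x\cdot y=\{m;xy\}$, applying the brace axiom \eqref{eqn:bracealgeqn} with $n=2$, and observing that since $m$ has arity $2$ the outer brace $\{m;w_{1}\cdots w_{r}\}$ vanishes as soon as the word has length $r\geq 3$, which forces $i_{1}=0$, $j_{1}=i_{2}=k$ and $j_{2}=p$, collapsing the double interval sum to the single cut point $k$ and yielding $\sum_{k}\gamma_{\scriptscriptstyle{\mathcal{P}}}(m\otimes\{x;z_{1}\cdots z_{k}\}\otimes\{y;z_{k+1}\cdots z_{p}\})=\sum_{k}\{x;z_{1}\cdots z_{k}\}\cdot\{y;z_{k+1}\cdots z_{p}\}$, with the empty-argument convention $\{x;\varnothing\}=x$ handling the endpoints $k=0$ and $k=p$.

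One imprecision you should fix before writing this up: your auxiliary lemma is misstated. The brace $\{m;a_{1}\cdots a_{r}\}$ is \emph{not} ``supported on words with exactly two non-identity letters'': one has $\{m;\varnothing\}=m$, $\{m;a\}=m\circ_{1}a+m\circ_{2}a$, and $\{m;a_{1}a_{2}\}=\gamma_{\scriptscriptstyle{\mathcal{P}}}(m\otimes a_{1}\otimes a_{2})$, all generally nonzero, and the letters of the brace argument are always branched into inputs of $m$ --- the identities only fill the inputs left over, so ``letters being identities'' is not the relevant dichotomy. The correct and sufficient statement is simply that $\{m;a_{1}\cdots a_{r}\}=0$ for $r\geq 3$, because there is no way to branch three or more operators into two inputs while preserving their order. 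In your expansion the word inside the outer brace always contains the two bracketed letters $\{x;\cdots\}$ and $\{y;\cdots\}$, so it has length $i_{1}+(i_{2}-j_{1})+(p-j_{2})+2\geq 2$, and the term survives precisely when this equals $2$; no appeal to $P(1)=\mathbb{C}\cdot\mathrm{id}$ or to ``non-identity letters'' is needed. With the lemma restated this way, your proof is complete and rigorous.
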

Equation \eqref{eqn:distributivity} is key to the twisted factorization of the $T$-transform as explained below in Subsection \ref{ssec:freeproductoperadmultiplication}.


\section{Twisted factorization of the T-transform}
\label{sec:twistedmultiplicativity}

In this section, we give a concise graphical proof of Theorem 7.18 in reference \cite{dykema2007multilinear}. The starting point is a formula, for operator-valued free cumulants with the product of two free random variables as entries, expressed in the language of operads.  We then show how non-crossing linked partitions are naturally entering the picture due to degree reduction resulting from filling inputs of a multilinear map with the unit of the algebra $B$.


\subsection{Free cumulants of products of random variables}
\label{ssec:freecumulantproduct}


In this subsection, we explain how to compute the multilinear function series corresponding to free cumulants of the product of two free random variables as the solution of a certain fixed point equation. The proof of this fixed point equation \eqref{FEQ1} is sketched below. This formula is well known in the scalar-valued case. The authors have not been able to locate the operator-valued version in the literature.


Let us fix once and for all two free random variables $a$ and $b$ in the operator-valued probability space $(\mathcal{A}, \phi, B)$ with $\phi(a)=\phi(b)=1_B$, and recall that we denote by $K_{x}$, $x\in\{a,b,ab\}$, the multilinear function series
\begin{equation}
	{K}_{x} = 1 + \sum_{n\geq 1} K^{(n)}_x,\quad K^{(n)}_x(b_1,\ldots,b_n)=\kappa_n(xb_1,xb_2,\ldots,xb_n,x),~b_1,\ldots,b_n \in B.
\end{equation}
Recall that in the scalar-valued case, i.e., when $B = \mathbb{C}$, one has the intriguing formula \cite{nica2006lectures}
\begin{equation}
	\label{eqn:krewerascomplement}
	\kappa_{n}(ab,\ldots,ab) = \sum_{\pi \in \nc(n)} \kappa_{\pi}(a) \kappa_{{\rm Kr}(\pi)}(b).
\end{equation}
Here, the non-crossing partition ${\rm Kr}(\pi) \in \nc(n)$ is the Kreweras complement of $\pi \in \nc(n)$, first introduced in \cite{kreweras1972partitions}. For two non-crossing partitions $\alpha$ and $\beta$ in $\nc(n)$, one denotes by $\alpha\cup\beta$ the partition of the interval $\llbracket 1,2n \rrbracket$ whose restriction to the odd integers, respectively to the even integers, coincides with $\alpha$, respectively $\beta$. By definition, $\rm{Kr}(\pi)$ is the maximal non-crossing partition (for the refinement order) such that $\pi \cup \rm{Kr}(\pi)$ is a non-crossing partition of $\nc(2n)$.

In the operator-valued case, since the cumulants of $a$ and $b$ do not commute with each other (they are elements of the non-commutative algebra $B$), the right-hand side of equation \eqref{eqn:krewerascomplement} does not factorise over $\pi$ and its Kreweras complement $\rm{Kr}(\pi)$. In fact, we should maintain the linear order between random variables in the word $a\otimes b\otimes\cdots \otimes a\otimes b$.


\medskip

Recall that the operadic morphisms $\hat{\sf K}_a$ and $\hat{\sf K}_b$ from $\mathcal{N}\mathcal{C}$ to the operad of endomorphisms $\mathrm{End}_{B}$ of $B$ are defined in equation \eqref{eqn:operadicmorphismcumulants}. The free product $\hat{\sf K}_{a} \sqcup \hat{\sf K}_{b}$ is the unique operadic morphism on the operad $\mathcal{N}\mathcal{C} \sqcup \mathcal{N}\mathcal{C}$ such that $(\hat{\sf K}_{a}\sqcup \hat{\sf K}_{b})(\pi) = \hat{\sf K}_{a}(\pi)$ if all blocks of $\pi$ are coloured with
$0$ (that is $\pi$ belongs to the first copy of $\mathcal{N}\mathcal{C}$ in $\mathcal{N}\mathcal{C}\sqcup\mathcal{N}\mathcal{C}$) or
$(\hat{\sf K}_{a} \sqcup \hat{\sf K}_{b}) (\pi) = \hat{\sf K}_{b}(\pi)$
if all blocks of $\pi$ are coloured with $1$ ($\pi$ belongs to the second copy of $\mathcal{N}\mathcal{C}$ in $\mathcal{N}\mathcal{C}\sqcup\mathcal{N}\mathcal{C}$).

With these definitions, we \emph{claim} that the operator-valued counterpart of formula \eqref{eqn:krewerascomplement}, computing cumulants of the product of two free random variable, reads
\begin{equation}
	\label{eqn:krewerascomplementop}
	x_{0}\kappa_{n}(ay_{1}bx_{1},ay_{2}bx_{2},\ldots,ay_{n}b)x_{n} = \sum_{\pi \in \nc(n)} (\hat{\sf K}_{a} \sqcup \hat{\sf K}_{b})(\tilde{\pi}) (x_{0},y_{1},x_{1},\ldots,y_{n},x_{n}),
\end{equation}
with $x_{0}, \ldots, x_{n} \in B$ and $y_{1},\ldots,y_{n} \in B$.
Here, we will denote by $\tilde{\pi}$ the non-crossing partition $\pi \cup \rm{Kr}(\pi)$ of $\llbracket 1,2n \rrbracket$. Each block of $\tilde{\pi}$ is coloured with $0$ or $1$, according to the parity of the elements in the block yielding an element of the free product $\mathcal{NC} \sqcup \mathcal{N}\mathcal{C}$. Validity of formula \eqref{eqn:krewerascomplementop} will be ascertained by the derivation of the correct formula for the $T$-transform of the product $ab$ in terms of the $T$-transforms of $a$ and $b$. We therefore only briefly comment on how the proof in the scalar case $(B=\mathbb{C})$ can be adapted to show \eqref{eqn:krewerascomplementop} in the particular case $n=3$. The cumulant $x_0\kappa_{3}(ay_0bx_1, ay_1bx_2, ay_2bx_3)$ is a sum over the non-crossing partitions $\pi$ of $\llbracket 1,6\rrbracket$ with $\pi \vee \{\{1,2\},\{3,4\},\{5,6\}\}=1_3$ of the partitioned mixed cumulants of ($ay_0$,$bx_1$,\ldots,$ay_2$,$bx_3$). Because $a$ and $b$ are two free random variables, blocks of such a non-crossing partition contain one variable or the other, but not mixed ones. These leave us with only five non-crossing partitions, $\{\{1,3\},\{5\}\}\cup \{\{2\},\{4,6\}\}$, $\{\{1\},\{3,5\} \}\cup \{\{2,6\},\{4\}\}$, $\{ \{1,5\},\{3\}\}\cup\{ \{2,4\},\{6\}\} $ and
$\{ \{1,3,5\} \} \cup \{ \{2\},\{4\},\{6\}\}$,$\{ \{ \{1\},\{3\},\{5\} \cup \{2,4,6\} \}\}$. The previous non-crossing partitions are of the form $\pi \cup {\sf Kr}(\pi)$, but in contrast to the scalar case the partitioned cumulant $\kappa_{\pi \cup {\sf Kr}(\pi)}(ay_0,bx_1,ay_1,bx_2,ay_2,bx_3)$ is not the product of the cumulants $\kappa_{\pi}(ay_0,ay_1,ay_2)$ and $\kappa_{{\sf Kr}(\pi)}(bx_1,bx_2,bx_3)$ because blocks of $\pi$ and ${\sf Kr}(\pi)$ are nested one into the others. For the above listed partitions the corresponding cumulants are
\begin{align*}
	\{\{1,3\},\{5\}\}\cup \{\{2\},\{4,6\}\}    &&  \kappa_2(ay_0\kappa_1(bx_1),ay_1)\kappa_{2}(bx_2\kappa_{1}(ay_2),bx_3)\\
	\{\{1\},\{3,5\} \}\cup \{\{2,6\},\{4\}\}   &&  \kappa_1(ay_0)\kappa_2(bx_1\kappa_2(ay_1\kappa_1(bx_2),ay_2),bx_3)                                                   \\
	\{ \{1,5\},\{3\}\}\cup\{ \{2,4\},\{6\}\}   && \kappa_2(ay_0\kappa_2(bx_1\kappa_1(ay_1),bx_2),ay_2)\kappa_1(bx_3)                                                      \\
	\{ \{1,3,5\} \} \cup \{ \{2\},\{4\},\{6\}\}&& \kappa_3(ay_0\kappa_1(bx_1),ay_1\kappa_1(bx_2),ay_2\kappa_1(bx_3)) \\
	\{ \{2,4,6\} \} \cup \{ \{1\},\{3\},\{5\}\}&& \kappa_3(\kappa_1(ay_0)bx_1,\kappa_1(ay_1)bx_2,\kappa_1(ay_2)bx_3)
\end{align*}
The expressions above on the right match the values of $({\hat{\sf K}}_a \sqcup {\hat{\sf K}}_b)(\tilde{\pi})$ following the interpretation of $\tilde{\pi}$ as an element of the free product $\mathcal{N}\mathcal{C}\sqcup \mathcal{N}\mathcal{C}$.
To obtain the operator-valued free cumulants of $ab$, seen as multilinear maps on $B$, we set the $y$'s equal to $1_B\in B$ in formula \eqref{eqn:krewerascomplementop}.
We explain how this degree reduction yields a sum over non-crossing \emph{linked} partitions in place of a sum over non-crossing partitions.

Pick a non-crossing partition $\pi \in \nc(n)$. Figure~\ref{fig:nctoncl} displays a partition $\tilde{\pi}$ with the blocks coloured according to the parity of the elements. Recall that the front gap of a block is the gap located in front of the first leg and the back gap is the one located just after the last leg. We symbolize evaluation to the unit $1_B$ of $B$ of the variables $y$'s by crosses. It is clear from the drawing in Figure~\ref{fig:nctoncl} that each block sees a cross either in its back gap or in its front gap.

Looking separately at each block of the partition $\tilde{\pi}$, one notices that a (odd) \emph{black block} has its \emph{back gap} marked with a cross whereas a (even) \emph{blue block} has its \emph{front gap} marked with a cross. 
The blue outer block plays a special role, as will become clear further below. We notice that this blue outer block is trivial (which means here a singleton) if the partition $\pi$ is irreducible non-crossing.

\begin{figure}[!ht]
		\includesvg[scale=1.2]{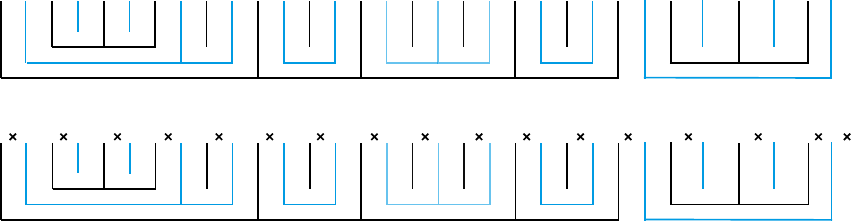}
	\caption{\label{fig:nctoncl} In the upper half, we have a partition $\tilde{\pi}$, made from the non-crossing partition $\pi$, drawn in black, and its Kreweras complement, drawn in blue. Below, we symbolized with a cross evaluations to $1_B$ of a variable in $B$ (that fall in a gap) }
\end{figure}
The multilinear map
\begin{equation*}
	\label{eqn:toobtain}
B^{\otimes (n+1)}\ni (x_{0},\ldots,x_{n}) \mapsto ({\sf \hat{K}}_{a} \sqcup {{\sf \hat{K}}_{b}})(\tilde{\pi})(x_{0},1_B,x_{1},1_B,\ldots,x_{n-1},1_B,x_n)
\end{equation*}
associated to the partition $\tilde{\pi}$ after evaluation of the $y$ variables to $1_B$ can thus be obtained by composing in the operad $\mathrm{End}_{B}$ the following multilinear maps
\begin{align*}
	 &k_{L}^{a}({n})(b_{1},\ldots,b_{n}) = \kappa_{n}(b_{1}a,\ldots,b_{n}a),\\ &k^{b}_{R}(n)(b_{1},\ldots,b_{n}) = \kappa_{n}(bb_{1},\ldots,b)b_{n}, &&k^{b}_{LR}(n+1)(b_{0},\ldots,b_{n}) = b_0\kappa_{n}(bb_{1},\ldots,b)b_{n}\quad
\end{align*}
with $b_0,\ldots,b_n \in B$.
How these multilinear maps are composed together to obtain \eqref{eqn:krewerascomplementop} is best understood by associating to $\tilde{\pi}$ a non-crossing linked partition $\tilde{\pi}_{\ell}$. First, we choose a tree monomial $\tau(\tilde{\pi})$ representing $\tilde{\pi}$. We explained that $\tilde{\pi}$, with the blocks coloured, should be seen as an element of the free product $\mathcal{N}\mathcal{C} \sqcup \mathcal{N}\mathcal{C}$. Hence, the tree monomial $\tau(\tilde{\pi})$ representing $\tilde{\pi}$ has coloured corollas, too. We can impose on $\tau(\tilde{\pi})$ the following requirements:
\begin{enumerate}[\indent 1.]
    \item \label{item:first} the root corolla is blue and its rightmost input is a leaf of $\tau(\tilde{\pi})$, 
    \item \label{item:second} the leftmost and rightmost inputs of a corolla are leaves of $\tau(\tilde{\pi})$.
\end{enumerate}
Owing to the very definition of the Kreweras complement of a non-crossing partition, the restriction of $\tilde{\pi}$ to an interval of integers bounded by two legs of a block of $\tilde{\pi}$ is irreducible. This entails property $\ref{item:second}$ for any tree monomial representing $\tilde{\pi}$.

Next we erase leaves from the tree monomial $\tau(\tilde{\pi})$ according the following rules:
\begin{enumerate}[\indent 1.]
    \item \label{item:stepone} we erase the rightmost leaf of a black corolla,
    \item \label{item:steptwo} we erase the leftmost leaf of a blue corolla different from the root,
		\item \label{item:stepthree} finally, from the set of corollas obtained by applications of the two above points, we erase corollas with a single input, excepted the root corolla.
\end{enumerate}
By definition, the tree monomial $\tau(\tilde{\pi}_{\ell})$ always has a blue root corolla.
\begin{figure}[!ht]
	\includesvg[scale=0.8]{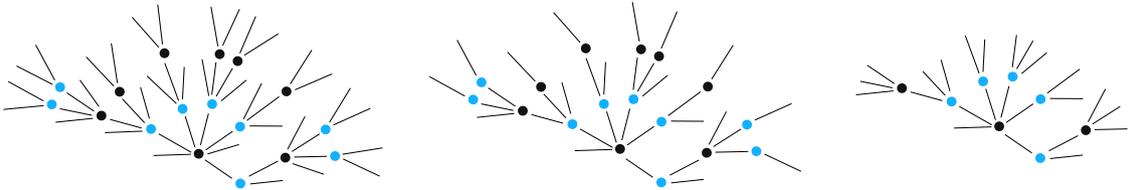}
	\caption{\label{fig:stepspitopil} On the left, the tree monomial $\tau(\tilde{\pi})$ representing the partition in Fig. \ref{fig:nctoncl}. In the center, we applied item \ref{item:stepone} and item \ref{item:steptwo}. On the right hand side we see the resulting monomial representing the non-crossing linked partition $\tilde{\pi}_{\ell}$. }
\end{figure}

We obtain a tree $\tau(\tilde{\pi}_{\ell})$ which, by making the substitution $\mathbb{1}_n \rightarrow 1_n$, $n\geq 2$ on each corolla different from the root and $\mathbb{1}_{n}\to\theta_n$ for the root corolla can be seen as a tree monomial on one-block ncl partitions representing a ncl partition $\tilde{\pi}_{\ell}$ in the nesting-or-linking operad. In Figure~\ref{nctoncl2}, we have represented the (non connected) ncl partition $\tilde{\pi}_{\ell}$ resulting from the process described above, starting with the partition pictured in Figure  \ref{fig:nctoncl}. Notice that all singletons in $\pi$ are eliminated by this process. The association $\mathcal{N}\mathcal{C} \ni \pi \mapsto \tilde{\pi}_\ell \in \mathcal{N}\mathcal{C}\mathcal{L}$ is of course not bijective for the image does only contain ncl partitions with nested blocks alternatingly coloured black or blue. Besides, a blue block of $\tilde{\pi}_{\ell}$ always has its rightmost leg $\emph{free}$ and a black block always has its leftmost block $\emph{free}$.
Finally, notice that the non-crossing linked partition $\tilde{\pi}_{\ell}$ does not meet the requirement of Dykema to have no blocks sharing their minimal elements, as shown in Figure \ref{fig:nctoncl}. This is the reason why we consider a more general definition of a non-crossing linked partition.

\begin{figure}[!ht]
		\includesvg[scale=0.75]{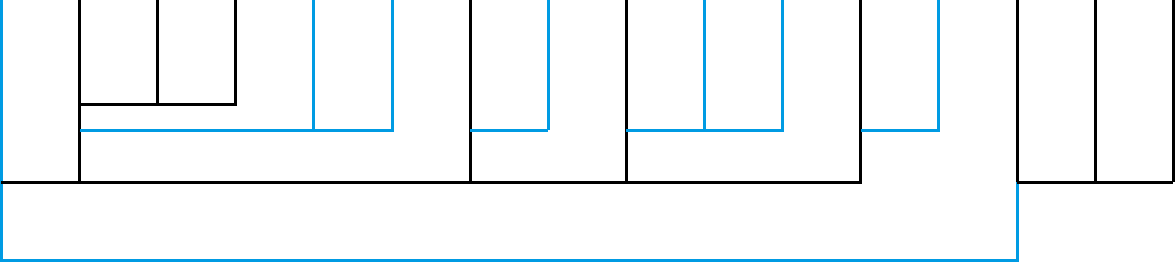}
	\caption{\label{nctoncl2} The resulting non-crossing linked partition.}
\end{figure}

To a blue root corolla of $\tau(\tilde\pi_{\ell})$ (representing the outer blue block in Figure \ref{nctoncl2}) corresponds a left and right $B$ linear map from the sequence $k^b_{LR}$, while to the other blue corollas correspond right $B$ linear maps from the sequence $k^b_{R}$. To black corollas correspond left linear multilinear maps in the sequence $k^a_L$. This entails that $\tilde{\pi}_{\ell}$ should in fact be seen as an element of the triple free product $\mathcal{NCL}^{\sqcup 3}$, with the blue outer block seen as element of the third copy of $\mathcal{N}\mathcal{C}\mathcal{L}$ in $\mathcal{NCL}^{\sqcup 3}$ and the other blocks distributed to the two remaining copies depending on their colour.
We call $\hat{\sf K}_{a}^{L}$ and $\hat{\sf K}_{b}^{R}$ the operadic morphisms on $\mathcal{NCL}$ with values in $\rm{End}_B$ that evaluate on one-block ncl partitions as $k^a_{L}$ and $k^b_R$ respectively. Finally, set for any non-crossing linked partitions $\pi_{\ell}$
$$
\hat{\sf K}_{a,b}({\pi}_{\ell})
:= (\hat{\sf K}_a^{L}\sqcup \hat{\sf K}_b^{R})({\pi}_{\ell}).
$$
If $\tilde{\pi}$ is an \emph{irreducible} non-crossing partition, the blue root corolla of $\tau(\tilde\pi)$ has only two inputs and the ncl partition $\tilde{\pi}_{\ell}$ has a singleton for outer blue block. Hence, to an irreductible non-crossing partition $\pi$ in NC$(n)$ corresponds a \emph{connected} and bi-coloured ncl partition obtained by restricting $ \tilde{\pi}_{\ell}$ to $\llbracket 1,n-1 \rrbracket$.
Define the multilinear function series $V_{a,b}$ whose homogeneous component of order $n$ is the sum of the multilinear maps ${\hat{\sf K}}_{a,b}(\tilde{\pi}_{\ell})$ with $\pi$ ranging over the set of all irreducible non-crossing partitions,
\begin{equation*}
    V_{a,b} = 1 + \sum_{n=1}^{\infty} V_{a,b}^{(n)},\quad V_{a,b}^{(n)}(b_1,\ldots,b_n)=\sum_{\pi\in{\rm NC}_{\rm irr}(n)}{{\hat{\sf K}}}_{a,b}(\tilde{\pi}_{\ell})(1,b_1,\ldots,b_n,1).
\end{equation*}
The next proposition follows from the discussion above. Recall that the series $K_a, K_b \in \mathbb{C}[[{\rm End}(B)]]$ have been defined at the beginning of this section. The two products $\centerdot$ and $\times$ are defined in Section $\ref{ssec:operadwithmult}$. In addition, we set
\begin{equation*}
	{K}_{ab} = 1 + \sum_{n\geq 1} K^{(n)}_{ab},\quad K^{(n)}_{ab}(b_1,\ldots,b_n)=\kappa_n((ab)b_1,(ab)b_2,\ldots,(ab)b_n,(ab)),~b_1,\ldots,b_n \in B.
\end{equation*}
\begin{proposition}
Let $a$ and $b$ be two free random variables in $\mathcal{A}$, then
\begin{align}
	\label{FEQ1}
	V_{a,b} = K_{a}\times [(K_{b}\times [I\centerdot V_{a,b}])\centerdot I]
\end{align}
and the cumulant series $K_{ab}$ is given by
\begin{equation}
	\label{FEQ2}
	K_{ab} = V_{a,b} \centerdot (K_{b}\times [I \centerdot V_{a,b}]).
\end{equation}
\end{proposition}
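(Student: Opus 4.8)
The plan is to deduce both identities by unwinding the construction $\pi\mapsto\tilde{\pi}_{\ell}$ and the operadic morphisms $\hat{\sf K}_a^{L},\hat{\sf K}_b^{R}$ of the preceding discussion, and then reorganising the resulting sums recursively. First, after setting every $y$-variable equal to $1_B$, formula \eqref{eqn:krewerascomplementop} rewrites $K_{ab}^{(n)}$ as a sum over $\pi\in\nc(n)$ of the multilinear map attached to $\tilde{\pi}_{\ell}$ — read through $\hat{\sf K}_a^{L}$ on the black blocks and through $\hat{\sf K}_b^{R}$ on the blue ones, the outer blue block being read instead via its left-and-right variant $k^b_{LR}$, so that $\tilde{\pi}_\ell$ lives in the triple free product $\mathcal{NCL}^{\sqcup 3}$ — evaluated on the given arguments with $1_B$ inserted in the positions produced by the crosses. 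Restricting this sum to irreducible $\pi$, hence to the connected ncl partitions they yield, is by definition $V_{a,b}$.

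The second ingredient is that $\hat{\sf K}_{a,b}=\hat{\sf K}_a^{L}\sqcup\hat{\sf K}_b^{R}$ is a morphism of operads \emph{with multiplication}: the arity-two operator $\pmb{|}\,\pmb{|}$ of $\mathcal{NCL}$ is sent to the algebra product $\mu_B$ of $\mathrm{End}_B$, so on completed series it intertwines the brace operations, hence the products $\times$ and $\centerdot$ together with their respective units $I$ and $1$ in $\mathrm{Mult}[[B]]$. Consequently any identity among formal series of ncl partitions written purely in terms of operadic composition, the multiplication $\pmb{|}\,\pmb{|}$ and the units transports verbatim to $\mathrm{Mult}[[B]]$, and both \eqref{FEQ1} and \eqref{FEQ2} reduce to statements about the recursive structure of bi-coloured ncl partitions.

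For \eqref{FEQ1} I would peel off the outermost — necessarily black — block $V$ of a connected bi-coloured ncl partition: it is read through $\hat{\sf K}_a^{L}$, and onto some of its legs are grafted the ncl partitions hanging below, each rooted at a blue block read through $\hat{\sf K}_b^{R}$ whose rightmost leg is free, which itself carries further black-rooted pieces each with a free leftmost leg. This self-similar pattern is precisely $K_a\times\big[(K_b\times[I\centerdot V_{a,b}])\centerdot I\big]$: the factor $I\centerdot V_{a,b}$ reserves the free leftmost leg of a black sub-block, $K_b\times(-)$ grafts such pieces onto a blue block, $(-)\centerdot I$ reserves the free rightmost leg of that blue block, and $K_a\times(-)$ grafts the results onto the outermost black block; summing over configurations and applying the morphism of the previous paragraph yields \eqref{FEQ1}. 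For \eqref{FEQ2} I would instead split an arbitrary $\pi$ into its irreducible components $\rho_1\mid\rho_2\mid\cdots$ and cut $\tilde{\pi}_\ell$ at its outer blue block: $\rho_1$ contributes a factor $V_{a,b}$ on a prefix of the arguments, while the outer blue block together with the grafts coming from $\rho_2,\ldots$ contributes $K_b\times[I\centerdot V_{a,b}]$ on the complementary suffix, the two being joined by the left/right $B$-action of the outer block, i.e.\ by $\centerdot$; this is $K_{ab}=V_{a,b}\centerdot\big(K_b\times[I\centerdot V_{a,b}]\big)$.

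The genuine work, I expect, is the leg-by-leg bookkeeping hidden in the last paragraph: one must verify that $\pi\mapsto\tilde{\pi}_\ell$ leaves exactly the leftmost leg of every black block and the rightmost leg of every blue block free, that these free legs are consumed precisely by the identity factors $I$ appearing in $I\centerdot V_{a,b}$ and in $(-)\centerdot I$, and that no bi-coloured ncl partition is over- or under-counted when one moves between the grafting product $\times$ and the juxtaposition product $\centerdot$. Equivalently, the crux is the compatibility of the bijection between non-crossing partitions, bi-coloured ncl partitions and trees of alternately coloured blocks with the two monoid structures on $\mathrm{Mult}[[B]]$; once that is pinned down, \eqref{FEQ1} and \eqref{FEQ2} follow by applying $\hat{\sf K}_{a,b}$.
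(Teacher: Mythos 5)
Your proposal is correct and follows essentially the same route as the paper: the paper states this proposition as "following from the discussion above", and that discussion is precisely your argument — formula \eqref{eqn:krewerascomplementop}, evaluation of the $y$-variables at $1_B$ to pass from $\tilde{\pi}$ to the bi-coloured ncl partition $\tilde{\pi}_{\ell}$ read through $\hat{\sf K}_a^{L}\sqcup\hat{\sf K}_b^{R}$ (with the outer blue block read via $k^b_{LR}$), the identification of irreducible $\pi$ with connected $\tilde{\pi}_{\ell}$ defining $V_{a,b}$, and the recursive peeling of the outer black block for \eqref{FEQ1} versus the splitting into irreducible components at the outer blue block for \eqref{FEQ2}. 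The leg-by-leg bookkeeping you flag as the remaining work (free leftmost legs of black blocks matching $I\centerdot V_{a,b}$, free rightmost legs of blue blocks matching $(-)\centerdot I$) is exactly what the paper establishes in the passage from Figure~\ref{fig:nctoncl} to Figure~\ref{nctoncl2}, and is not carried out in more detail there than in your sketch.
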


\subsection{Short proof of the twisted factorization of the $T$-transform}
\label{ssec:shortprooftwistedmultiplicativity}

In this subsection, we give a short graphical proof for the twisted factorization of the $T$-transform \eqref{eqn:twisted}. Following the work of Dykema, we define two subsets of multilinear function series,
\begin{equation*}
	\mult[[B]]_{0} = \{A \in \mult[[B]] : A_{0} = 0 \},\quad G=\mult[[B]]_{1} = \{A \in \mult[[B]] : A_{0} = 1 \}.
\end{equation*}
We represent by planar rooted trees the operations of \emph{concatenation} (the product $\centerdot$) and \emph{composition} (the product $\times$) of multilinear function series.
The composition $A \times D$ of two series $A \in \mult[[B]]$ and $D \in \mult[[B]]_{0}$ by a two nodes graph with a single vertical edge.

Doing so, we associate to the multilinear function series $A \in \mult[[B]]$ an operator with a single input acting on $\mult[[B]]_{1}$,
\begin{equation}
	\mult[[B]]_{1} \ni D \mapsto A \times D.
\end{equation}
The above operator is a set operator, in particular it is not linear. The resulting series belongs either to $\mult[[B]]_{0}$ if $A \in \mult[[B]]_{0}$ or to $\mult[[B]]_{1}$ if $A \in \mult[[B]]_{1}$.

More generally, if $W=E_{1} E_{2} \cdots E_{n}$ is a \emph{word} on multilinear function series, we associate to $W$ set operators, with multiple inputs, acting on $\mult[[B]]_{1}$. Each of these operators is drawn as a corolla with its root decorated by $W$ and with at most $n$ leaves. Each leaf corresponds to composition of the multilinear function series with one of the letters $E_{i}$, followed by concatenation of the resulting multilinear function series. For example, in the case $W = E_{1}E_{2}$, we have drawn in Figure~\ref{fig:operators} the associated operators.

\begin{figure}[!ht]
	\scalebox{0.9}{
		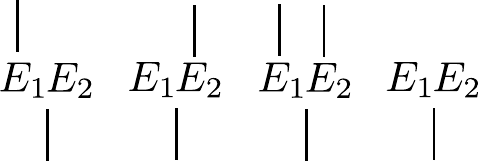}
	\caption{\label{fig:operators} Operators associated with the word $E_{1}E_{2}$, from left to right, $A \mapsto (E_{1}\circ A)E_{2},~A \mapsto E_{1} (E_{2}\circ A), (A,D)\mapsto (E_{1}\circ A)(E_{2}\circ D)$ and $E_{1}\centerdot E_{2}$.  }
\end{figure}

Notice that the edges of the corollas drawn in Figure \ref{fig:operators} should be coloured, with $1$ for the inputs and with $0$ for the output, respectively $1$, if $E_{1} \cdot \,\cdots\,\cdot E_{n}{}^{0} = 0$, respectively, $E_{1} \cdot \,\cdots\,\cdot E_{n}{}^{1} = 1$. We omit these colourizations to lighten notations.

In Figure \ref{fig:fpcumulants}, we represent graphically the defining relation of the $T$-transform and in Figure \ref{fig:fixedpointkreweras} the two equations \eqref{FEQ1} and \eqref{FEQ2}.
\begin{figure}[!ht]
	\scalebox{0.8}{
		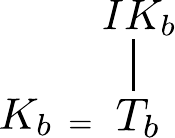
	}
	\caption{\label{fig:fpcumulants} Equation satisfied by the $T$-transform and the free cumulants. }
\end{figure}


\begin{figure}[!ht]
	\scalebox{0.7}{
		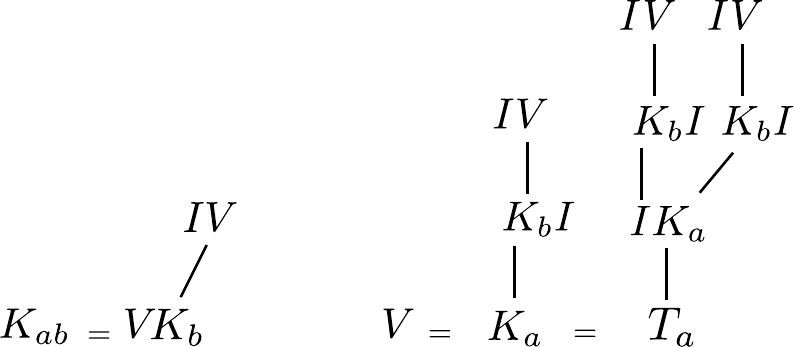
	}
	\caption{\label{fig:fixedpointkreweras} Graphical representation of equations \eqref{FEQ1} respectively \eqref{FEQ2}. Note that we omitted the indices at $V$ lighten notation. 
	}
\end{figure}

\begin{proposition}[Prop.~2.3. in \cite{dykema2007multilinear}]
	\label{prop:distributivity}
	Let $A$, $C$ and $D$ ($D_{0}=0$) be three multilinear function series, then
	\begin{equation*}
		[A\centerdot C] \times D
		= (A \times D)\centerdot (C \times D).
	\end{equation*}
\end{proposition}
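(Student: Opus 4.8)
The plan is to unfold both sides into their homogeneous components and match them term by term; the only substantive ingredient is the distributivity \eqref{eqn:distributivity}, equivalently the associativity of the operadic composition $\gamma$ of $\mathrm{End}_{B}$ together with the defining formula $x\centerdot y=\{m;xy\}$ for concatenation, so that $(A\centerdot C)_{p}=\sum_{j=0}^{p}\gamma(m\otimes A_{j}\otimes C_{p-j})$.

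Since all three objects are multilinear function series, it suffices to check equality of the arity-$n$ components on an arbitrary tuple $(b_{1},\ldots,b_{n})\in B^{\otimes n}$; for $n=0$ both sides equal $A_{0}C_{0}$ because $D_{0}=0$, so assume $n\ge 1$. Unfolding $\times$ (composition) and then $\centerdot$ on the inner series $A\centerdot C$, the arity-$n$ part of $[A\centerdot C]\times D$ is the sum, over an integer $p\ge 1$, a composition $m_{1}+\cdots+m_{p}=n$ with all $m_{i}\ge 1$ (the constraint $m_{i}\ge 1$ being forced by $D_{0}=0$), and a cut point $0\le j\le p$, of
\[
\gamma\big((A_{j}\centerdot C_{p-j})\otimes D_{m_{1}}\otimes\cdots\otimes D_{m_{p}}\big).
\]
Dually, unfolding $\centerdot$ on the right-hand side and then $\times$ on each factor, the arity-$n$ part of $(A\times D)\centerdot(C\times D)$ is the sum, over a splitting $N+M=n$, a composition of $N$ into $j$ parts $m_{1},\ldots,m_{j}\ge 1$ and a composition of $M$ into $i$ parts $m_{j+1},\ldots,m_{j+i}\ge 1$, of $\gamma\big(m\otimes\gamma(A_{j}\otimes D_{m_{1}}\otimes\cdots\otimes D_{m_{j}})\otimes\gamma(C_{i}\otimes D_{m_{j+1}}\otimes\cdots\otimes D_{m_{j+i}})\big)$, with the convention that $j=0$ (resp.\ $i=0$) contributes the constant $A_{0}$ (resp.\ $C_{0}$) with $N=0$ (resp.\ $M=0$).

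These two index sets are in evident bijection: a composition of $n$ into $p$ parts together with a marked cut after the $j$-th part is exactly a composition of $N:=m_{1}+\cdots+m_{j}$ into its first $j$ parts plus a composition of $M:=m_{j+1}+\cdots+m_{p}$ into the remaining $i:=p-j$ parts, with $N+M=n$. Under this bijection the two summands agree by a single instance of operad associativity,
\[
\gamma\big(\gamma(m\otimes A_{j}\otimes C_{p-j})\otimes D_{m_{1}}\otimes\cdots\otimes D_{m_{p}}\big)=\gamma\big(m\otimes\gamma(A_{j}\otimes D_{m_{1}}\otimes\cdots\otimes D_{m_{j}})\otimes\gamma(C_{p-j}\otimes D_{m_{j+1}}\otimes\cdots\otimes D_{m_{p}})\big),
\]
which is precisely \eqref{eqn:distributivity} applied with $x=A_{j}$, $y=C_{p-j}$ and arguments $z_{\bullet}=D_{m_{\bullet}}$: in the brace form of \eqref{eqn:distributivity} all $p+1$ summands vanish except the one indexed by $k=j$, since $A_{j}$ has exactly $j$ inputs and $C_{p-j}$ exactly $p-j$ while we feed in exactly $p$ arguments. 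Summing over all indices yields the claimed identity.

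The step I expect to demand the most care is the boundary bookkeeping — the cut falling before all parts or after all of them, so that one factor on the right collapses to its constant $A_{0}$ or $C_{0}$ (here one uses that $A_{0},C_{0}$ have arity $0$, so that $\gamma(A_{0}\otimes-)=A_{0}$ and $\gamma(C_{0}\otimes-)=C_{0}$) — and, relatedly, confirming that the brace expansion of \eqref{eqn:distributivity} indeed collapses to the single surviving term; everything else is a transparent re-indexing. The same computation also admits a purely graphical rendering in the corolla calculus of Figure~\ref{fig:operators}: pushing the $D$-labelled leaves of a corolla decorated by $A\centerdot C$ through the concatenation vertex $m$ produces exactly the concatenation of the two corollas decorated by $A$ and by $C$, which is the asserted equality.
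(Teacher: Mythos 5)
Your argument is correct and complete. Note that the paper does not actually prove Proposition~\ref{prop:distributivity} — it is imported from Dykema's work — but your component-wise verification is exactly the natural one, and it matches the paper's own point of view, which presents the proposition as the series-level consequence of the Gerstenhaber--Voronov relation \eqref{eqn:distributivity} (itself a single instance of operad associativity in $\mathrm{End}_B$ once $(A\centerdot C)_p=\sum_j\gamma(m\otimes A_j\otimes C_{p-j})$ is unfolded). Your two delicate points are handled correctly: the boundary terms $j=0$ and $j=p$ of the cut match the arity-zero contributions $(A\times D)_0=A_0$ and $(C\times D)_0=C_0$ on the right-hand side, and in \eqref{eqn:distributivity} only the summand $k=j$ survives because for $k<j$ (resp.\ $k>j$) the factor $\{C_{p-j};z_{k+1}\cdots z_p\}$ (resp.\ $\{A_j;z_1\cdots z_k\}$) receives more arguments than its arity and vanishes.
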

Notice that since $1=1 \circ E$, one has $(A\circ E)^{-1} = A^{-1} \circ E$.

\begin{theorem}[Theorem 7.18 in \cite{dykema2007multilinear}]
	\label{thm:twistedmultplicativityun}
	Let $a,b$ be two free random variables, then
	\begin{equation*}
		T_{ab} = (T_{a}\times [T_{b}\centerdot I\centerdot T_{b}^{-1}]) \centerdot T_{b}.
	\end{equation*}
\end{theorem}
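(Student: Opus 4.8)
The plan is to apply both the right-hand side of the claimed identity and the genuine transform $T_{ab}$ to one and the same argument, and then to cancel the composition. Throughout I use the defining relation of the $T$-transform, $K_x = T_x\times(I\centerdot K_x)$ for $x\in\{a,b,ab\}$; this is legitimate for $x = ab$ because $\phi(ab)=1_B$ by freeness of $a$ and $b$. Abbreviate $V:=V_{a,b}$, $U:=I\centerdot V$, $S:=K_b\times U$ and $E:=U\centerdot S$. From \eqref{FEQ2} and associativity of $\centerdot$ one has $I\centerdot K_{ab}=(I\centerdot V)\centerdot S = E$, hence
$$
K_{ab}\;=\;V\centerdot S\;=\;T_{ab}\times E .
$$

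Next I would record two auxiliary identities that use only the defining relation of $T$, Proposition \ref{prop:distributivity}, and the fact that $I$ is the unit for $\times$. Rewriting \eqref{FEQ1} as $V=K_a\times(S\centerdot I)$, substituting $K_a=T_a\times(I\centerdot K_a)$, using associativity of $\times$, then distributing $\times(S\centerdot I)$ over $I\centerdot K_a$ and invoking \eqref{FEQ1} once more gives $V = T_a\times(S\centerdot U)$. Substituting $K_b=T_b\times(I\centerdot K_b)$ into $S = K_b\times U$ and distributing gives $S = T_b\times(U\centerdot S) = T_b\times E$. By the observation after Proposition \ref{prop:distributivity} (composition with $E$ is a unital morphism for the product $\centerdot$), it follows that $S^{-1} = T_b^{-1}\times E$, the inverse being taken with respect to $\centerdot$.

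Now set $\widetilde T:=(T_a\times[T_b\centerdot I\centerdot T_b^{-1}])\centerdot T_b$. Applying Proposition \ref{prop:distributivity} to the outer $\centerdot$, then associativity of $\times$ and Proposition \ref{prop:distributivity} to $T_b\centerdot I\centerdot T_b^{-1}$, one gets
$$
\widetilde T\times E\;=\;\Bigl(T_a\times\bigl((T_b\times E)\centerdot(I\times E)\centerdot(T_b^{-1}\times E)\bigr)\Bigr)\centerdot(T_b\times E)\;=\;\bigl(T_a\times(S\centerdot E\centerdot S^{-1})\bigr)\centerdot S .
$$
Since $E=U\centerdot S$, we have $S\centerdot E\centerdot S^{-1}=S\centerdot U$, and the first auxiliary identity yields $T_a\times(S\centerdot U)=V$; hence $\widetilde T\times E = V\centerdot S = K_{ab} = T_{ab}\times E$. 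Finally, $E_0=0$ and $E_1=\mathrm{id}_B$, so the map $A\mapsto A\times E$ is linear and injective on $\mult[[B]]$ — one recovers $A_0=(A\times E)_0$ and then, recursively, each $A_n$ from $(A\times E)_n$ since the top term of the composition contributes $A_n$ — and therefore $\widetilde T = T_{ab}$, which is the assertion.

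The main obstacle is keeping the bookkeeping straight rather than any deep difficulty: Proposition \ref{prop:distributivity} must be applied each time with a $\centerdot$-product placed to the left of $\times$ (the composition product is linear and distributive only in that relative position), one must not conflate the composition unit $I$ with the concatenation unit $1$, and the inverses $T_b^{-1}$ and $S^{-1}$ must be read as inverses for $\centerdot$ — which is exactly what the observation following Proposition \ref{prop:distributivity} legitimises. In the pictorial rendering, the whole argument becomes a manipulation of the corollas of Figure \ref{fig:operators} grafted along the trees of Figure \ref{fig:fixedpointkreweras}.
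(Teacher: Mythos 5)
Your argument is correct and uses exactly the ingredients of the paper's (diagrammatic) proof: the fixed-point equations \eqref{FEQ1} and \eqref{FEQ2}, the defining relation $K_x=T_x\times(I\centerdot K_x)$, Proposition~\ref{prop:distributivity}, and the compatibility of $\centerdot$-inverses with composition by $E$. The only cosmetic difference is that you verify the identity by composing both sides with $E=I\centerdot K_{ab}$ and cancelling via injectivity of $A\mapsto A\times E$ (using $E_0=0$, $E_1=\mathrm{id}_B$), whereas the paper derives the fixed-point equation $K_{ab}=\bigl((T_a\times[T_b\centerdot I\centerdot T_b^{-1}])\centerdot T_b\bigr)\times(I\centerdot K_{ab})$ by a chain of tree manipulations and concludes by uniqueness --- the same step in different clothing.
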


\begin{proof}
	The proof of the statement is represented diagrammatically in Figure~\ref{fig:proofmultiplicativity} and Figure~\ref{fig:proofmultiplicativitydeux}. Note that we have omitted the indices at $V=V_{a,b}$ to lighten the notation.

	\begin{figure}[!ht]
		\scalebox{0.75}{
			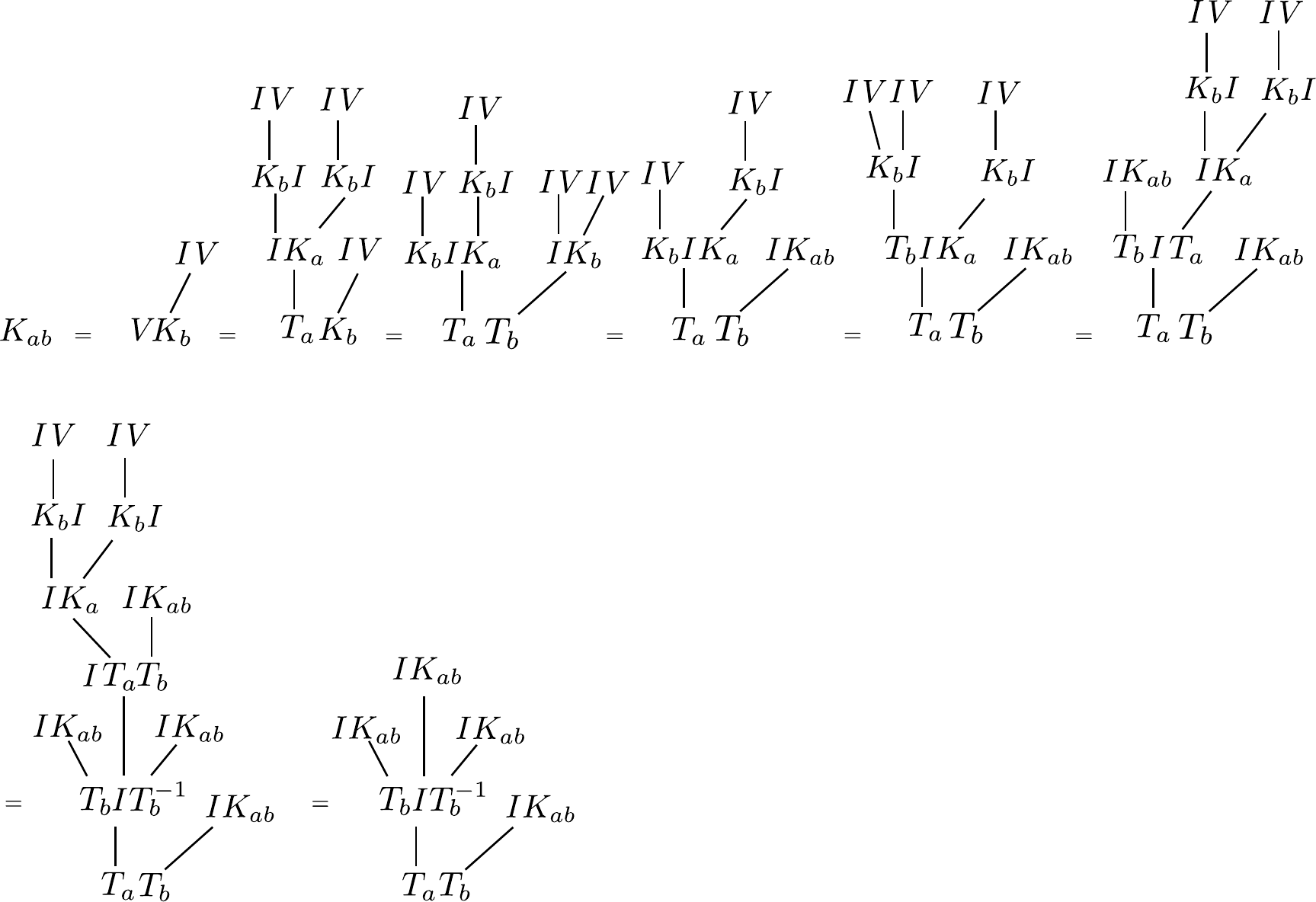
		}
		\caption{\label{fig:proofmultiplicativity} Proof of the multiplicativity property of the $T$-transform with respect to multiplication of free random variables
		}
	\end{figure}

	We detail the computations of Figure~\ref{fig:proofmultiplicativity}. For the first equality, we use equation \eqref{FEQ1} and for the second one equation \eqref{FEQ2}. The third one follows from inserting the defining equation for the $T$-transform of $b$ (see Figure \ref{fig:fpcumulants}). We then recognize the equation \eqref{FEQ1} in the leftmost tree attached to the node $T_{a}T_{b}$. The fourth and fifth equalities proceed from the same computations. To continue, we use the relation drawn in Figure \ref{fig:distributivity} for the expression circled with a dotted line in Figure~\ref{fig:proofmultiplicativitydeux}.
	\begin{figure}[!ht]
		\scalebox{0.8}{
			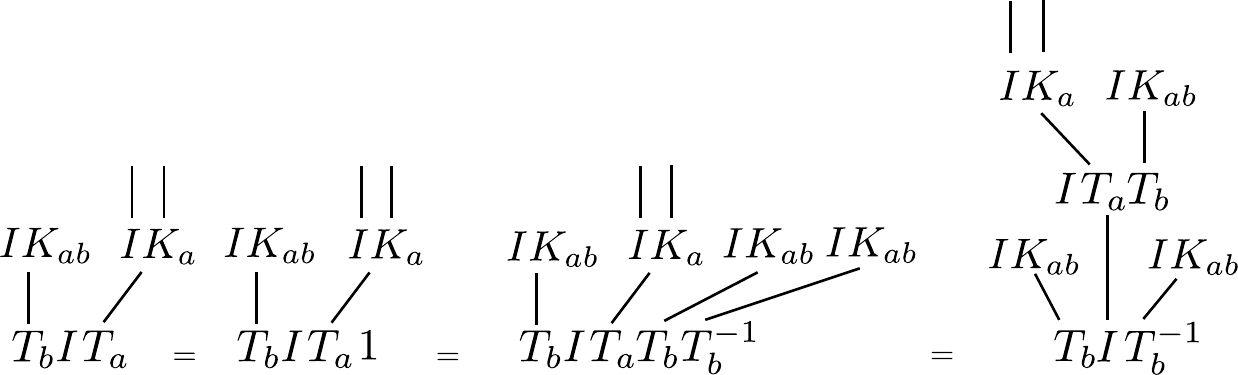
		}
		\caption{\label{fig:distributivity} Direct corollary of the relation $1 = 1 \times E = (A\times E)\centerdot(A^{-1}\times E)$.}

	\end{figure}
\begin{figure}[!ht]
	\scalebox{0.75}{
		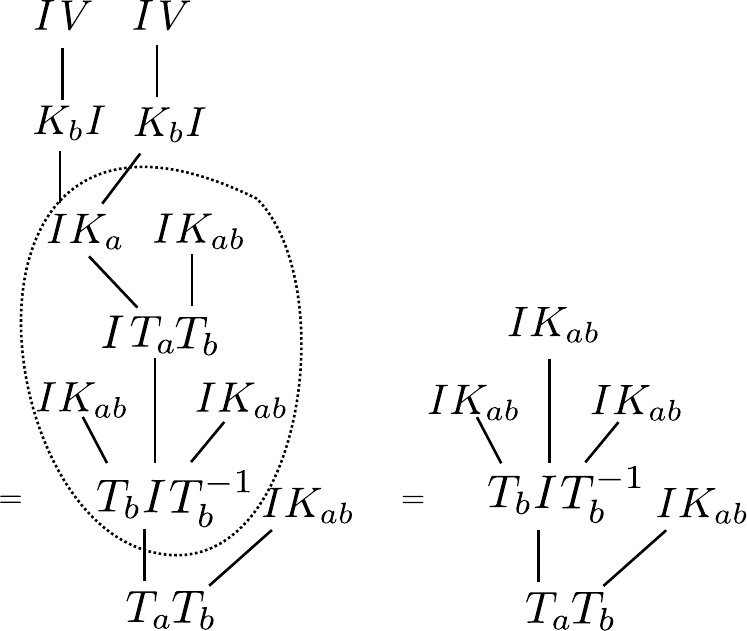
	}
	\caption{\label{fig:proofmultiplicativitydeux} End of the proof of the multiplicativity property of the $T$-transform with respect to multiplication of free random variables
	}
\end{figure}
\end{proof}

\subsection{Free product in an operad with multiplication}
\label{ssec:freeproductoperadmultiplication}
In this section, we give a more conceptual proof of the twisted factorization for the $T$-transform defined in the abstract setting of an operad with multiplication $(\mathcal{P},\gamma, m)$. This serves the objective of emphasizing the key role played by distributivity of the product $\times$ over the product $\centerdot$ (see Section \ref{ssec:operadwithmult}).%

Pick $A$ and $B$ two formal series in $\fs$ and define their concatenation product $A \centerdot B \in \fs$ by
\begin{equation}
	~A \centerdot B = \sum_{n\geq 1} \sum_{\substack{k,q\geq 0 \\ k+q = n}} A_{k} \cdot B_{q} = \sum_{n \geq 1} \sum_{\substack{k,q\geq 0 \\ k+q = n}}\{m;A_{k}\otimes B_{q}\}.
\end{equation}
The following proposition is a direct consequence of associativity of $m$ and of the operadic product $\gamma$.

\begin{proposition}
	$(\fs, \centerdot)$ is an graded  associative algebra.
\end{proposition}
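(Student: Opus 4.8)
The plan is to show that the concatenation product $A \centerdot B = \sum_{n \geq 1} \sum_{k+q=n} \{m; A_k \otimes B_q\}$ on $\fs$ is associative and respects the grading, both facts flowing directly from the corresponding properties of $m$ and $\gamma$.

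\textbf{Grading.} First I would unwind the grading. The operad is reduced, so the generator $m$ has arity $2$, and an element $x \in P(k)$ composed via the brace $\{m; x \otimes y\} = \gamma_{\scriptscriptstyle{\mathcal{P}}}(m \otimes x \otimes y)$ with $y \in P(q)$ lands in $P(k+q-1)$ — composing an arity-$2$ operator with operators of arity $k$ and $q$ produces arity $k+q-2+\dots$; I would double-check the exact arity bookkeeping against the convention used earlier (the indexing in $\fs = \prod_{n \geq 1} P(n)$ and how degrees add under $\centerdot$ in \eqref{multiplication}). The point is that $\{m; A_k \otimes B_q\}$ has a single well-defined arity depending only on $k$ and $q$, so the double sum defining $(A \centerdot B)_n$ is a finite sum of elements all of the same arity, and $\centerdot$ is a graded (degree-additive) bilinear operation on $\fs$. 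Bilinearity itself is immediate since the brace map $A \otimes T(A) \to A$ is linear.

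\textbf{Associativity.} The heart of the proof is $(A \centerdot B) \centerdot C = A \centerdot (B \centerdot C)$. Expanding the left side gives a sum of terms $\{m; \{m; A_k \otimes B_q\} \otimes C_r\}$ and the right side gives $\{m; A_k \otimes \{m; B_q \otimes C_r\}\}$. So it suffices to show, for all homogeneous $x, y, z$,
\begin{equation*}
	\{m; \{m; x \otimes y\} \otimes z\} = \{m; x \otimes \{m; y \otimes z\}\}.
\end{equation*}
Rewriting both sides in terms of $\gamma_{\scriptscriptstyle{\mathcal{P}}}$, the left side is $\gamma_{\scriptscriptstyle{\mathcal{P}}}(m \otimes \gamma_{\scriptscriptstyle{\mathcal{P}}}(m \otimes x \otimes y) \otimes z)$ and the right side is $\gamma_{\scriptscriptstyle{\mathcal{P}}}(m \otimes x \otimes \gamma_{\scriptscriptstyle{\mathcal{P}}}(m \otimes y \otimes z))$. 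By the associativity axiom for the operad $\gamma_{\scriptscriptstyle{\mathcal{P}}} \circ (\mathrm{id} \sbt \gamma_{\scriptscriptstyle{\mathcal{P}}}) = \gamma_{\scriptscriptstyle{\mathcal{P}}} \circ (\gamma_{\scriptscriptstyle{\mathcal{P}}} \sbt \mathrm{id})$, both equal $\gamma_{\scriptscriptstyle{\mathcal{P}}}\big((m \circ_1 m) \otimes x \otimes y \otimes z\big)$ after rebracketing — more precisely, the left side equals $\gamma_{\scriptscriptstyle{\mathcal{P}}}(m \circ_1 m \otimes (x \otimes y \otimes z))$ via partial-composition associativity $(p \circ_i q) \circ_j r = p \circ_i (q \circ_{j-i+1} r)$, and the right side equals $\gamma_{\scriptscriptstyle{\mathcal{P}}}(m \circ_2 m \otimes (x \otimes y \otimes z))$. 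These coincide exactly because $m$ is a multiplication: $\gamma_{\scriptscriptstyle{\mathcal{P}}}(m \otimes m \otimes \mathrm{id}) = \gamma_{\scriptscriptstyle{\mathcal{P}}}(m \otimes \mathrm{id} \otimes m)$, i.e. $m \circ_1 m = m \circ_2 m$. Then one reassembles the equality over the double sums to conclude $(A \centerdot B) \centerdot C = A \centerdot (B \centerdot C)$ as series.

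\textbf{Main obstacle.} There is no deep obstacle; the proposition is essentially a formality once the arity bookkeeping is set up correctly. The one place to be careful is the translation between the brace notation $\{m; x \otimes y\}$ and partial compositions $m \circ_i m$, and ensuring the index shift in $(p \circ_i q) \circ_j r = p \circ_i (q \circ_{j-i+1} r)$ is applied with the right offsets so that the two sides genuinely reduce to $m \circ_1 m$ and $m \circ_2 m$ acting on $x \otimes y \otimes z$ in the correct slot order. Once that is pinned down, the multiplication axiom closes the argument immediately.
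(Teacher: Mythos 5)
Your argument is correct and takes the same route as the paper, which offers no written proof beyond remarking that the proposition is a direct consequence of the associativity of $m$ and of the operadic product $\gamma$ --- precisely the reduction you carry out to the identity $m\circ_1 m = m\circ_2 m$ via operadic associativity. The only slip is in the arity bookkeeping you flag yourself: for $x\in P(k)$ and $y\in P(q)$ the element $\{m;x\otimes y\}=\gamma_{\scriptscriptstyle{\mathcal{P}}}(m\otimes x\otimes y)$ lands in $P(k+q)$ (not $P(k+q-1)$), which is exactly consistent with the indexing $k+q=n$ in the definition of $\centerdot$.
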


Set $\fs_{0}$ equal to the unitization (for the product $\centerdot$) of $\fs$,
$$
\fs_0 = \mathbb{C}1 \oplus \fs,~{\rm deg}~1 = 0
$$
and put
$$
\gr = \{ x \in \fs_0 : x_{0} = 1 \}.
$$
The set $G^{inv}$ if endowed with the concatenation product $\centerdot$ is a group. We have introduced so far two formal groups, $G$ and $G^{inv}$. The group $G$ is sometimes called the diffeomorphism group of the operad $\mathcal{P}$. Regarding notations, a generic element of the group $G$ will be denoted $g$ and a generic element of $\gr$ will be denoted $h$.

We define next two actions, a right action $\curvearrowleft$ of the group $G$ on $G^{inv}$, compatible with the product $\centerdot$, and a left action $\curvearrowright$ of $G^{inv}$ on $G$ by conjugation. We begin with the former.

\begin{definition}[Right action $\curvearrowleft$ of $G$ on $G^{inv}$]
Pick elements $h\in \gr$ and $g \in G$ and define $1 \curvearrowleft g := 1$ and
\begin{equation}
	h \curvearrowleft g := h \times g.
\end{equation}
\end{definition}

Owing to associativity of the product $\times$, $\curvearrowleft$ is a right action of $G$. Besides, it is compatible with $\centerdot$ in the following sense.

\begin{proposition}
	Pick $h,h^{\prime} \in G^{inv}$ and $g\in G$, then
	\begin{equation}
		\label{eqn:modulelalg}
		(h \centerdot h^{\prime}) \curvearrowleft g = (h \curvearrowleft g) \centerdot( h^{\prime} \curvearrowleft g),~(h \curvearrowleft g)^{-1} = h^{-1}\curvearrowleft g.
	\end{equation}
\end{proposition}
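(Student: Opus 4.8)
The statement has two parts. For the first identity, I would unwind both sides using the definitions of $\centerdot$ and $\curvearrowleft$ and reduce everything to Proposition~\ref{prop:distributivity}. Concretely, write $h\centerdot h' = \sum_{n}\sum_{k+q=n}\{m;h_k\otimes h'_q\}$; then $(h\centerdot h')\curvearrowleft g$ is $(h\centerdot h')\times g$ by definition (the case $h\centerdot h' = 1$ being trivial since $1\curvearrowleft g = 1$). Now the product $\times$ on $\fs$ distributes over the concatenation product $\centerdot = \{m;-\otimes-\}$ exactly by the Gerstenhaber--Voronov relation \eqref{eqn:distributivity}: applying it with $x = h$, $y = h'$ (as series) and $z_1\cdots z_p$ running over monomials in $g$, and summing, gives $(h\centerdot h')\times g = (h\times g)\centerdot(h'\times g)$, which is $(h\curvearrowleft g)\centerdot(h'\curvearrowleft g)$. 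This is really just Proposition~\ref{prop:distributivity} transported from $\mult[[B]]$ to the abstract operad-with-multiplication setting, so the argument is a one-line appeal once the bookkeeping of the constant term (degree $0$) is handled: one must check that the unitization conventions match, i.e. that $1\centerdot h = h$ and that the degree-$0$ part of $(h\centerdot h')\times g$ is the product of the degree-$0$ parts of $h\times g$ and $h'\times g$, which holds because $\times$ fixes the unit and constant coefficients multiply.

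For the second identity $(h\curvearrowleft g)^{-1} = h^{-1}\curvearrowleft g$, I would derive it as a formal consequence of the first. Since $\curvearrowleft$ is a right action of $G$ (by associativity of $\times$, as already noted in the text) and since $h\in G^{inv}$ has a two-sided inverse $h^{-1}$ for $\centerdot$, apply the first identity of \eqref{eqn:modulelalg} to the pair $h, h^{-1}$:
\[
(h\centerdot h^{-1})\curvearrowleft g = (h\curvearrowleft g)\centerdot(h^{-1}\curvearrowleft g).
\]
The left-hand side is $1\curvearrowleft g = 1$, so $h^{-1}\curvearrowleft g$ is a right inverse of $h\curvearrowleft g$ for $\centerdot$; symmetrically, applying the identity to $h^{-1}, h$ shows it is also a left inverse. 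Since $(G^{inv},\centerdot)$ is a group, two-sided inverses are unique, giving $(h\curvearrowleft g)^{-1} = h^{-1}\curvearrowleft g$. One small point to verify here is that $h\curvearrowleft g = h\times g$ indeed lies in $G^{inv}$, i.e. has constant coefficient $1$: this follows because $g\in G$ has $g_1 = \mathrm{id}$ so $\times$ by $g$ preserves the degree-$0$ (constant) part, hence $(h\times g)_0 = h_0 = 1$.

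**Main obstacle.** There is no serious mathematical obstacle — the content is entirely in Proposition~\ref{prop:distributivity}/equation~\eqref{eqn:distributivity}, which is assumed. The only care needed is notational: keeping straight the two gradings (the operad arity grading on $\fs$ versus the "$\deg 1 = 0$" convention on the unitization $\fs_0$), making sure the formula \eqref{eqn:distributivity} is being applied with the summation index $p$ ranging correctly over monomials of $g = \sum_p g_p$, and confirming that the definition $h\curvearrowleft g := h\times g$ together with $1\curvearrowleft g := 1$ is consistent with extending $\times$-by-$g$ to the unitized algebra as a $\centerdot$-algebra map. Once those conventions are pinned down, both identities are immediate.
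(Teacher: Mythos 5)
Your proposal is correct and follows essentially the same route as the paper: the first identity is obtained by unwinding the definitions of $\centerdot$ and $\curvearrowleft$ and invoking associativity of $\gamma$, i.e.\ the Gerstenhaber--Voronov distributivity \eqref{eqn:distributivity}, which is exactly the computation the paper carries out. Your explicit derivation of the second identity $(h\curvearrowleft g)^{-1}=h^{-1}\curvearrowleft g$ from the first via uniqueness of inverses in $(G^{inv},\centerdot)$ is a correct completion of a step the paper leaves implicit.
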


\begin{proof}
    Let $h,h^{\prime} \in G^{inv}$ and $g\in G$. Thanks to the definition of the concatenation product $\centerdot$ and the action $\curvearrowleft$,
    \begin{align*}
        (h\centerdot h^{\prime})\curvearrowleft g &= 1 + \sum_{\substack{k,q\geq 0 \\ k+q \geq 1}} \gamma(\{m;h_{k}h^{\prime}_{q} \} \otimes g\otimes\ldots\otimes g) \\
        &= 1+\sum_{\substack{k,q\geq 0 \\ k+q \geq 1}} \gamma(\gamma (m \otimes (h_{k}\otimes h^{\prime}_{q})) \otimes (g\otimes\cdots\otimes g)) \\ &= 1+\sum_{\substack{k,q\geq 0 \\ k+q \geq 1}} \gamma(m \otimes (\gamma(h_{k}\otimes g\otimes \cdots\otimes g) \otimes \gamma(h_{q}\otimes g\otimes\cdots\otimes g))) \\
        &= (h \curvearrowleft g) \centerdot (h^{\prime} \curvearrowleft g).
    \end{align*}
This concludes the proof.
\end{proof}

\begin{definition}[Left action $\curvearrowright$ of $G^{inv}$ on $G$]
The group $G^{inv}$ acts from the left on $G$ by conjugation,
\begin{equation}
	\quad h \curvearrowright g = h \centerdot g \centerdot h^{-1},\quad h \in G^{inv},~g \in G.
\end{equation}
\end{definition}

The two actions $\curvearrowleft$ and $\curvearrowright$ are compatible in the following sense.
\begin{proposition}
	\label{prop:galgebra}
	Let $h \in G^{inv}$, $g,g^{\prime} \in G$, then 
	\begin{equation}
		\label{eqn:moduleconjugation}
		(h \curvearrowright g^{\prime}) \times g  = (h \curvearrowleft g)\curvearrowright (g^{\prime}\times g)
	\end{equation}
\end{proposition}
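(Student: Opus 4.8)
The plan is to unfold both sides using the definitions of the three operations involved — the conjugation action $\curvearrowright$, the action $\curvearrowleft$ (which is just $\times$ on $\gr$), and the product $\times$ on $G$ — and reduce the identity to the distributivity statement of Proposition \ref{prop:distributivity} together with associativity of $\times$. Concretely, on the left-hand side I would write $h \curvearrowright g^{\prime} = h \centerdot g^{\prime} \centerdot h^{-1}$, so that $(h\curvearrowright g^{\prime})\times g = (h \centerdot g^{\prime} \centerdot h^{-1})\times g$. The key technical input is that $\times$ distributes over $\centerdot$: Proposition \ref{prop:distributivity} (applied twice, since there are three $\centerdot$-factors) gives
\[
(h \centerdot g^{\prime} \centerdot h^{-1})\times g = (h\times g)\centerdot(g^{\prime}\times g)\centerdot(h^{-1}\times g).
\]
Here one must be slightly careful: Proposition \ref{prop:distributivity} is stated for $D$ with $D_0 = 0$, whereas $g \in G$ has $g_1 = \mathrm{id}$; but in the present setting $\times$ plays the role that composition played there and $g$ is the appropriate ``$D$'', so the hypothesis is met in the operadic incarnation — I would add a line spelling this out, or simply invoke the graded-associative-algebra structure of $(\fs,\centerdot)$ together with the fact that $x \mapsto x \times g$ is the restriction of an algebra endomorphism.

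Next I would rewrite the right-hand side. By definition $h \curvearrowleft g = h \times g$, and this lies in $\gr$ by the remark following the definition of $\curvearrowleft$ (since $\times$ preserves the constant coefficient $1$; equivalently $(h\times g)^{-1} = h^{-1}\times g$ by \eqref{eqn:modulelalg}). Also $g^{\prime}\times g \in G$ because $\times$ preserves the condition ``linear coefficient $=\mathrm{id}$''. Therefore
\[
(h\curvearrowleft g)\curvearrowright(g^{\prime}\times g) = (h\times g)\centerdot(g^{\prime}\times g)\centerdot(h\times g)^{-1} = (h\times g)\centerdot(g^{\prime}\times g)\centerdot(h^{-1}\times g),
\]
using $(h\times g)^{-1} = h^{-1}\times g$ from \eqref{eqn:modulelalg} in the last step. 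Comparing with the expression obtained for the left-hand side, the two sides coincide, which finishes the proof.

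The only real obstacle is bookkeeping around the hypotheses of Proposition \ref{prop:distributivity}: one has to check that the instances of $\times$ appearing here genuinely fall under its scope (the ``$D$'' there must be $\centerdot$-composable from the right with arbitrary series, and $g$ has non-vanishing linear term, which is exactly the invertibility condition that makes $\times$ well-behaved). Once that is granted, everything else is a two-line manipulation: distribute $\times$ over the triple $\centerdot$-product, and recognize the conjugating factor $h\times g$ together with its inverse $h^{-1}\times g$. I do not anticipate any genuinely hard step; the content of the proposition is precisely that these three compatibilities — distributivity of $\curvearrowright$ over $\centerdot$, the multiplicativity $(h\centerdot h^{\prime})\curvearrowleft g = (h\curvearrowleft g)\centerdot(h^{\prime}\curvearrowleft g)$, and $(h\curvearrowleft g)^{-1} = h^{-1}\curvearrowleft g$ — interlock correctly.
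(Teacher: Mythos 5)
Your proof is correct and follows essentially the same route as the paper's: expand $h \curvearrowright g^{\prime}$ as $h \centerdot g^{\prime} \centerdot h^{-1}$, distribute $-\times g$ over the triple $\centerdot$-product to obtain $(h\times g)\centerdot(g^{\prime}\times g)\centerdot(h^{-1}\times g)$, and identify $(h\times g)^{-1}=h^{-1}\times g$ via \eqref{eqn:modulelalg}. The bookkeeping point you raise is handled in the paper by re-deriving the distributivity directly from associativity of the operadic composition $\gamma$ (exactly as in the proof of \eqref{eqn:modulelalg}), which applies verbatim to the mixed product whose middle factor $g^{\prime}$ lies in $G$ rather than $G^{inv}$, instead of invoking Proposition~\ref{prop:distributivity}.
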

\begin{proof}
    Let $h \in G^{inv}$ and $g,g^{\prime} \in G$,
    \begin{align*}
        (h \curvearrowright g^{\prime}) \times g &= 1+ h\centerdot g  + g\centerdot h^{-1} \\
        &+\sum_{k,l,m\geq 1} \gamma(\gamma(m\otimes \gamma(m \otimes (h_{k}\otimes g_{l}))\otimes h^{-1}_{m}) \otimes (g \otimes\cdots\otimes g)) \\
        &\hspace{-1cm}=1+ h\centerdot g  + g\centerdot h^{-1} \\
        &\hspace{-2cm}+\sum_{k,l,m\geq 1}\gamma\Big(m^{(2)}\otimes \Big[\gamma(h_{k}\otimes (g\otimes\cdots\otimes g)) \otimes \gamma(g_{l}\otimes (g\otimes\cdots\otimes g)) \otimes \gamma(h^{-1}_{l}\otimes (g \otimes \cdots \otimes g))\Big]\Big) \\
        &\hspace{-1cm}=(h \curvearrowleft g) \centerdot (g \times g^{\prime}) \centerdot (h^{-1} \curvearrowleft g) \\
        &\hspace{-1cm}= (h \curvearrowleft g) \centerdot (g \times g^{\prime}) \centerdot (h\curvearrowleft g)^{-1} \\
        &\hspace{-1cm}= (h \curvearrowleft g)\curvearrowright (g^{\prime}\times g).
    \end{align*}
\end{proof}
The identity $I$ of the operad $(\mathcal{P}, \gamma)$ induces two injections of the set $G^{inv}$ into $G$, obtained by left translation and right translation by $I$,
\begin{equation*}
	\label{eqn:translations}
	\begin{array}{clll}
		\lambda: & \gr & \rightarrow & G, \\
		       & h                     & \mapsto     & I\centerdot h
	\end{array},
	\begin{array}{clll}
		\rho: & \gr & \rightarrow & G \\
		       & h                     & \mapsto     & h\centerdot I
	\end{array}
\end{equation*}
The maps $\rho$ and $\lambda$ are not group morphisms. However, they satisfy a number of interesting properties listed below:
\begin{enumerate}[\indent 1.]
\item \label{item:inj} both maps are injective,
\item \label{item:ranges} their respective ranges are stable with respect to the product $\times$,
\item \label{item:coalg} they extend to coalgebra morphisms between the bialgebras of polynomial functions on $G^{inv}$ and $G$,
\begin{equation*}
	(\lambda \otimes \lambda) \circ \Delta^{G^{inv}} = \Delta^{G} \circ \lambda,~ (\rho \otimes \rho) \circ \Delta^{G^{inv}} = \Delta^{G} \circ \rho
\end{equation*}
with $\Delta^{G}(g) = g\otimes g$ and $\Delta^{G^{inv}}(h) = h \otimes h$.
\end{enumerate}

The range of $\rho$ and $\lambda$ are subgroups of $G$ that are usually denoted $G^{\rho}$, respectively $G^{\lambda}$ in the literature, see \cite{frabetti}.
Next, owing to Proposition \ref{prop:galgebra} and item \ref{item:coalg} above, $G^{inv}$ can be endowed with two additional products,
\begin{equation*}
	h \star_{l} h^{\prime} = h^{\prime} \centerdot (h\curvearrowleft \lambda(h^{\prime})),~ h \star_{r} h^{\prime} = (h \curvearrowleft \rho(h^{\prime}))\centerdot h^{\prime}
\end{equation*}

\begin{proposition}
\label{prop:cocycle}
	Let $h,h^{\prime} \in G^{inv}$ and $g \in G$,
	\begin{align}
	\label{eqn:cocyles}
		 & \lambda(h) \times \lambda(h^{\prime}) = \lambda(h \star_{l} h^{\prime}),\quad \rho(h) \times \rho(h^{\prime}) = \rho(h\star_{r}h^{\prime})
	\end{align}
	\begin{align}
	\label{eqn:rholambda}
		 & \rho(h) = h \curvearrowright \lambda(h)
	\end{align}
\end{proposition}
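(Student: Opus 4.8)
The plan is to obtain all three identities simply by unwinding the definitions of $\lambda$, $\rho$, $\curvearrowleft$, $\curvearrowright$, $\star_l$, $\star_r$ and feeding in only what is already available: associativity of $\centerdot$ on $\fs_0$ with two-sided unit $1$; the facts that the operad unit $I$ is the two-sided unit for $\times$ and that $\times$ is linear in its first argument; and the distributivity of $\times$ over $\centerdot$, i.e.\ the series form $(A\centerdot C)\times g=(A\times g)\centerdot(C\times g)$ for $g\in G$ of equation \eqref{eqn:distributivity} (the abstract counterpart of Proposition \ref{prop:distributivity}). With these ingredients the whole argument is bookkeeping of the interaction of the two products, so I do not expect a genuine obstacle; the one point needing care is flagged at the end.

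I would first dispatch \eqref{eqn:rholambda}, which does not involve $\times$ at all. Unfolding the conjugation action and then $\lambda$, and using associativity of $\centerdot$, the group relation $h\centerdot h^{-1}=1$ and neutrality of $1$,
\begin{equation*}
h\curvearrowright\lambda(h)=h\centerdot\lambda(h)\centerdot h^{-1}=h\centerdot(I\centerdot h)\centerdot h^{-1}=h\centerdot I\centerdot(h\centerdot h^{-1})=h\centerdot I=\rho(h).
\end{equation*}

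Next, for the two cocycle relations in \eqref{eqn:cocyles}. For $\lambda$ I would compute
\begin{align*}
\lambda(h)\times\lambda(h')&=(I\centerdot h)\times\lambda(h')=(I\times\lambda(h'))\centerdot(h\times\lambda(h'))\\
&=\lambda(h')\centerdot(h\curvearrowleft\lambda(h'))=I\centerdot h'\centerdot(h\curvearrowleft\lambda(h'))=I\centerdot(h\star_l h')=\lambda(h\star_l h'),
\end{align*}
where the second equality is distributivity with $g=\lambda(h')\in G$, the third uses $I\times\lambda(h')=\lambda(h')$ and the definition $h\times\lambda(h')=h\curvearrowleft\lambda(h')$, and the remaining steps are the definition of $\lambda$, associativity of $\centerdot$, and the definition $h\star_l h'=h'\centerdot(h\curvearrowleft\lambda(h'))$. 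The relation for $\rho$ is the mirror computation:
\begin{align*}
\rho(h)\times\rho(h')&=(h\centerdot I)\times\rho(h')=(h\times\rho(h'))\centerdot(I\times\rho(h'))\\
&=(h\curvearrowleft\rho(h'))\centerdot\rho(h')=(h\curvearrowleft\rho(h'))\centerdot h'\centerdot I=(h\star_r h')\centerdot I=\rho(h\star_r h').
\end{align*}

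The step I would write out most carefully is the use of distributivity, since here it is applied with a left concatenation factor ($h\in\gr$) whose degree-$0$ component is nonzero. This is legitimate: equation \eqref{eqn:distributivity} (equivalently Proposition \ref{prop:distributivity}) constrains only the right argument of $\times$ — here $\lambda(h')$, respectively $\rho(h')$, which lies in $G$ by construction, so the $\times$-product against it is defined — and since $\times$ is linear in its first argument, splitting off the degree-$0$ part of $h$ shows that $(A\centerdot C)\times g=(A\times g)\centerdot(C\times g)$ extends from $\fs$ to $\fs_0$ in exactly the form used above (with $h\times g$ interpreted via $h\curvearrowleft g$). Everything else is a routine chain of unit and associativity manipulations, so I do not anticipate a real obstacle.
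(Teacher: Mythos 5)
Your proof is correct and takes essentially the same route as the paper's: unfold $\lambda(h)=I\centerdot h$ (resp.\ $\rho(h)=h\centerdot I$), apply the distributivity of $\times$ over $\centerdot$, and finish with unit and associativity manipulations to recognize $\star_l$ (resp.\ $\star_r$). The paper only writes out the $\lambda$ case, declaring the $\rho$ case ``similar'' and the identity $\rho(h)=h\curvearrowright\lambda(h)$ ``obvious''; you spell out both, together with the (correct) remark that the degree-zero part of $h$ is handled by the convention $1\curvearrowleft g=1$.
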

\begin{proof} With $h,h^{\prime} \in G^{inv}$, one has
	\begin{equation*}
		\lambda(h) \times \lambda(h^{\prime})= (I\centerdot h)\times (I\centerdot h^{\prime}) = (I\centerdot h^{\prime}) \centerdot (h\times (I\centerdot h^{\prime})) = I\centerdot(h^{\prime} \centerdot (h\times (I\centerdot h^{\prime})))).
	\end{equation*}
	The computations with $\rho$ in place of $\lambda$ are similar. The second statement is obvious.
\end{proof}

The above proposition can be restated by saying that $\lambda^{-1}$ and $\rho^{-1}$ are two cocycles with respect to the right action $\curvearrowleft$ and the group product on $G^{\lambda}$, respectively $G^{\rho}$,
$$
\lambda^{-1}(g\times g^{\prime}) = \lambda^{-1}(g)\centerdot\lambda^{-1}(g^{\prime})\curvearrowleft g^{\prime}.
$$

\begin{proposition} Pick $h$ and $h^{\prime}$ in $\gr$, then
	\begin{equation}
		\label{eqn:ilir}
		a)~\rho(h)\times\lambda(h^{\prime}) = (h^{\prime})^{-1} \curvearrowright \rho(h\star_{l} h^{\prime}),\quad
		b)~\lambda(h) \times \rho(h^{\prime}) = h^{\prime} \curvearrowright \lambda(h \star_{r} h^{\prime}),\quad
	\end{equation}
\end{proposition}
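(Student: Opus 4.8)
The plan is to derive both identities in \eqref{eqn:ilir} purely formally from \eqref{eqn:rholambda}, Proposition \ref{prop:galgebra}, the cocycle relations \eqref{eqn:cocyles}, and the compatibility \eqref{eqn:modulelalg} of $\curvearrowleft$ with $\centerdot$. The one piece of new bookkeeping is a short \emph{exchange lemma} relating the two conjugations attached to $\lambda$ and $\rho$: for all $h_{1},h_{2}\in G^{inv}$,
\[
	h_{2}\curvearrowright\lambda(h_{1}\centerdot h_{2})=h_{1}^{-1}\curvearrowright\rho(h_{1}\centerdot h_{2}).
\]
I would prove it first. Writing $x=h_{1}\centerdot h_{2}$, relation \eqref{eqn:rholambda} gives $\rho(x)=x\curvearrowright\lambda(x)=x\centerdot\lambda(x)\centerdot x^{-1}$; conjugating by $h_{1}^{-1}$ and simplifying $h_{1}^{-1}\centerdot x=h_{2}$ and $x^{-1}\centerdot h_{1}=h_{2}^{-1}$ in the group $(G^{inv},\centerdot)$ collapses this to $h_{2}\centerdot\lambda(x)\centerdot h_{2}^{-1}=h_{2}\curvearrowright\lambda(x)$. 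The two degenerate cases are worth recording: $h_{2}=1$ yields $\lambda(h)=h^{-1}\curvearrowright\rho(h)$, while $h_{1}=1$ recovers \eqref{eqn:rholambda}.

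For part a) I would start from $\rho(h)=h\curvearrowright\lambda(h)$ (this is \eqref{eqn:rholambda}) and apply Proposition \ref{prop:galgebra} (equation \eqref{eqn:moduleconjugation}) with $g'=\lambda(h)$ and $g=\lambda(h')$, obtaining $\rho(h)\times\lambda(h')=(h\curvearrowleft\lambda(h'))\curvearrowright(\lambda(h)\times\lambda(h'))$. The cocycle relation \eqref{eqn:cocyles} rewrites $\lambda(h)\times\lambda(h')=\lambda(h\star_{l}h')$, so the right-hand side becomes $(h\curvearrowleft\lambda(h'))\curvearrowright\lambda(h\star_{l}h')$. Since $h\star_{l}h'=h'\centerdot(h\curvearrowleft\lambda(h'))$ by definition, the exchange lemma applied with $h_{1}=h'$ and $h_{2}=h\curvearrowleft\lambda(h')$ turns this into $(h')^{-1}\curvearrowright\rho(h\star_{l}h')$, which is exactly a).

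Part b) runs along the same lines, with the preliminary rewriting $\lambda(h)=h^{-1}\curvearrowright\rho(h)$ (the degenerate case of the exchange lemma) in place of \eqref{eqn:rholambda}. Applying Proposition \ref{prop:galgebra} with $g'=\rho(h)$ and $g=\rho(h')$ gives $\lambda(h)\times\rho(h')=(h^{-1}\curvearrowleft\rho(h'))\curvearrowright(\rho(h)\times\rho(h'))$; then \eqref{eqn:modulelalg} replaces $h^{-1}\curvearrowleft\rho(h')$ by $(h\curvearrowleft\rho(h'))^{-1}$, and \eqref{eqn:cocyles} replaces $\rho(h)\times\rho(h')$ by $\rho(h\star_{r}h')$. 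Since $h\star_{r}h'=(h\curvearrowleft\rho(h'))\centerdot h'$, the exchange lemma with $h_{1}=h\curvearrowleft\rho(h')$ and $h_{2}=h'$ converts the right-hand side into $h'\curvearrowright\lambda(h\star_{r}h')$, giving b).

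I do not anticipate a serious obstacle: both identities are formal consequences of the group-theoretic data already assembled. The only points needing a little care are matching the precise shape $(h_{0}\curvearrowright g')\times g$ required by Proposition \ref{prop:galgebra} — which is what dictates the preliminary rewritings of $\rho(h)$ and $\lambda(h)$ as conjugates — and tracking inverses in $(G^{inv},\centerdot)$ while simplifying the conjugations in the exchange lemma.
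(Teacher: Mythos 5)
Your argument is correct, but it follows a genuinely different route from the paper's. The paper proves the identity (in fact only part $b)$, leaving $a)$ implicit) by a direct one-line expansion: it writes $\lambda(h)\times\rho(h^{\prime})=(I\centerdot h)\times(h^{\prime}\centerdot I)$, distributes $\times$ over $\centerdot$ using $I\times g=g$ to get $h^{\prime}\centerdot I\centerdot\big(h\times(h^{\prime}\centerdot I)\big)$, and then inserts $h^{\prime}\centerdot(h^{\prime})^{-1}$ by hand to exhibit the conjugation $h^{\prime}\curvearrowright\lambda(h\star_{r}h^{\prime})$. You instead derive both parts formally from the already-established statements --- \eqref{eqn:rholambda}, the intertwining relation \eqref{eqn:moduleconjugation}, the cocycle identities \eqref{eqn:cocyles}, and \eqref{eqn:modulelalg} --- via your exchange lemma $h_{2}\curvearrowright\lambda(h_{1}\centerdot h_{2})=h_{1}^{-1}\curvearrowright\rho(h_{1}\centerdot h_{2})$, which is not in the paper but is an easy consequence of $\rho(x)=x\centerdot\lambda(x)\centerdot x^{-1}$ and cancellation in $(\gr,\centerdot)$; I checked the conjugation bookkeeping ($h_{1}^{-1}\centerdot x=h_{2}$, $x^{-1}\centerdot h_{1}=h_{2}^{-1}$) and the three applications of the lemma, and they are all sound. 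What your approach buys is uniformity --- parts $a)$ and $b)$ become the same computation with $(\lambda,\star_{l})$ and $(\rho,\star_{r})$ swapped, both treated explicitly --- and it makes visible that the proposition is a formal consequence of the axiomatics of the two actions rather than of the specific form of $\lambda$ and $\rho$; what the paper's computation buys is brevity and independence from Propositions \ref{prop:galgebra} and \ref{prop:cocycle}. Note also that your derivation of $b)$ from the cocycle property of $\rho$ together with \eqref{eqn:moduleconjugation} and \eqref{eqn:rholambda} is exactly the observation the authors make in the remark following the proposition.
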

\begin{proof} We rely on the very definition of the left action $\curvearrowleft$ and of the maps $\rho$ and $\lambda$. Let $h,h^{\prime} \in \gr$, then
	\begin{equation}
		\lambda(h) \times \rho(h^{\prime}) = h^{\prime} \centerdot I \centerdot (h \times (h^{\prime}\centerdot I))  = h^{\prime} \centerdot (I \centerdot (h\times (h^{\prime}\centerdot I) ) \centerdot h^{\prime} ) \centerdot (h^{\prime})^{-1}
	\end{equation}
\end{proof}

We remark that relation $b)$ in the last proposition can be seen to be implied by Proposition \ref{prop:galgebra} and the cocycle property for $\rho$. However, it does not seem to be a consequence of the cocycle property for $\lambda$ together with Proposition \ref{prop:galgebra} and (\ref{eqn:rholambda}).

Next, we define the free product of two elements in $G^{inv}$ in the abstract setting of an operad with multiplication.
\begin{definition}[Free product]
	Pick $k_{a}$ and $k_{b}$ in $G^{inv}$ and define the free product $k_{a} \boxtimes k_{b} \in G^{inv}$ as $k_{a} \boxtimes k_{b} := k_{b} \star_{l} v$, where
	\begin{align*}
		 v := k_{a} \curvearrowleft \rho(k_{b} \curvearrowleft \lambda(v))
	\end{align*}
\end{definition}

\begin{theorem}
	\label{thm:multttransform}
	Pick $k_{a}$ and $k_{b}$ in $G^{inv}$ and assume the following fixed point equations in $G^{inv}$ to hold
	\begin{equation}
		k_{a} = t_{a} \curvearrowleft \lambda(k_{a}),\quad k_{b} = t_{b} \curvearrowleft \lambda(k_{b}),
	\end{equation}
	with $t_{a}$ and $t_{b}$ in $G^{inv}$. One has
	\begin{equation*}
		k_{a} \boxtimes k_{b} = \big(\left[t_{a} \curvearrowleft (t_{b} \curvearrowright I)\right] \centerdot t_{b} \big) \curvearrowleft \lambda(k_{a}\boxtimes k_{b}).
	\end{equation*}
\end{theorem}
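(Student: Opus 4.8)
The plan is to unfold both sides of the asserted identity with the definitions of $\boxtimes$, $\star_{l}$ and $\star_{r}$, and then to reduce everything to the two fixed-point hypotheses together with the structural facts already in place: that $\curvearrowleft$ is a right action of $(G,\times)$ compatible with $\centerdot$ in the sense of \eqref{eqn:modulelalg}, the cocycle identities of Proposition \ref{prop:cocycle} and of \eqref{eqn:ilir}, and the compatibility of the two actions recorded in Proposition \ref{prop:galgebra}. Write $k:=k_{a}\boxtimes k_{b}$, let $v$ be the element defining the free product, and set $w:=k_{b}\curvearrowleft\lambda(v)$, so that $v=k_{a}\curvearrowleft\rho(w)$ and, by the definitions of $\star_{l}$ and $\star_{r}$,
\begin{equation*}
    k \;=\; k_{b}\star_{l}v \;=\; v\centerdot w, \qquad k_{a}\star_{r}w \;=\; (k_{a}\curvearrowleft\rho(w))\centerdot w \;=\; v\centerdot w \;=\; k .
\end{equation*}
The observation that $k$ is simultaneously the $\star_{l}$-product $k_{b}\star_{l}v$ and the $\star_{r}$-product $k_{a}\star_{r}w$ is the structural crux of the argument.

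I would then rewrite $w$ in terms of $k$: using $k_{b}=t_{b}\curvearrowleft\lambda(k_{b})$, the right-action property of $\curvearrowleft$, and the $\lambda$-cocycle identity $\lambda(k_{b})\times\lambda(v)=\lambda(k_{b}\star_{l}v)=\lambda(k)$ from \eqref{eqn:cocyles}, one gets $w=(t_{b}\curvearrowleft\lambda(k_{b}))\curvearrowleft\lambda(v)=t_{b}\curvearrowleft(\lambda(k_{b})\times\lambda(v))=t_{b}\curvearrowleft\lambda(k)$. Next I would rewrite $v$ in terms of $k$: from $v=k_{a}\curvearrowleft\rho(w)$, $k_{a}=t_{a}\curvearrowleft\lambda(k_{a})$ and the right-action property, $v=t_{a}\curvearrowleft\big(\lambda(k_{a})\times\rho(w)\big)$. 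Applying identity $b)$ of \eqref{eqn:ilir} with $h=k_{a}$, $h'=w$ gives $\lambda(k_{a})\times\rho(w)=w\curvearrowright\lambda(k_{a}\star_{r}w)=w\curvearrowright\lambda(k)$; substituting $w=t_{b}\curvearrowleft\lambda(k)$ and using Proposition \ref{prop:galgebra} (with $g'=I$, $g=\lambda(k)$, so that $I\times\lambda(k)=\lambda(k)$) rewrites $w\curvearrowright\lambda(k)$ as $(t_{b}\curvearrowright I)\times\lambda(k)$. Hence, once more by the right-action property,
\begin{equation*}
    v \;=\; t_{a}\curvearrowleft\big((t_{b}\curvearrowright I)\times\lambda(k)\big) \;=\; \big(t_{a}\curvearrowleft(t_{b}\curvearrowright I)\big)\curvearrowleft\lambda(k) .
\end{equation*}

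To finish, I would plug the two formulas into $k=v\centerdot w$ and use the compatibility \eqref{eqn:modulelalg} of $\curvearrowleft$ with $\centerdot$:
\begin{equation*}
    k \;=\; \Big(\big(t_{a}\curvearrowleft(t_{b}\curvearrowright I)\big)\curvearrowleft\lambda(k)\Big)\centerdot\big(t_{b}\curvearrowleft\lambda(k)\big) \;=\; \Big(\big(t_{a}\curvearrowleft(t_{b}\curvearrowright I)\big)\centerdot t_{b}\Big)\curvearrowleft\lambda(k),
\end{equation*}
which, since $k=k_{a}\boxtimes k_{b}$, is precisely the claimed twisted factorization. I expect the only genuinely non-routine point to be the identity $k_{a}\star_{r}w=k_{a}\boxtimes k_{b}$: it is exactly this double presentation of $k$ that lets the $\rho$-cocycle identity in \eqref{eqn:ilir} and Proposition \ref{prop:galgebra} interlock. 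Everything else is bookkeeping, the only care needed being to track which of $\lambda(\cdot)$, $\rho(\cdot)$, $t_{a}$, $t_{b}$, $k$ lies in $G$ and which in $G^{inv}$, so that each instance of $\curvearrowleft$, $\curvearrowright$, $\times$ and $\centerdot$ is legitimately formed.
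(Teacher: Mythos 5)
Your proof is correct and follows essentially the same route as the paper's: the same chain of reductions via the fixed-point hypotheses, the $\lambda$-cocycle identity, part $b)$ of \eqref{eqn:ilir}, Proposition \ref{prop:galgebra} with $g'=I$, and the compatibility \eqref{eqn:modulelalg}. Your reorganisation around the element $w=k_b\curvearrowleft\lambda(v)$ and the up-front observation that $k_a\boxtimes k_b = k_b\star_l v = k_a\star_r w$ is a clean packaging of what the paper derives mid-proof, but it is the same argument.
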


\begin{proof} Set $k_{ab} = k_{a} \boxtimes k_{b}$. First, we use the fact that $\curvearrowleft$ is a right action of the group $(G,\times)$ to write
	\begin{align*}
		k_{ab} = v \centerdot (k_{b} \curvearrowleft \lambda(v)) = v \centerdot (t_{b} \curvearrowleft (\lambda(k_{b}) \times \lambda(v))) = v \centerdot (t_{b} \curvearrowleft \lambda(k_{b} \star_{l} v)) = v \centerdot (t_{b} \curvearrowleft \lambda(k_{ab})).
	\end{align*}
	We then use the fixed point equations satisfied by $k_{a}$ and $k_{b}$,
	\begin{align*}
		v = t_{a} \curvearrowleft (\lambda(k_{a}) \times \rho(k_{b} \curvearrowleft \lambda(v))) & = t_{a} \curvearrowleft (\lambda(k_{a}) \times \rho(t_{b} \curvearrowleft (\lambda(k_{b}) \times \lambda(v)))). \\
	    & = t_{a} \curvearrowleft (\lambda(k_{a})\times \rho(t_{b} \curvearrowleft \lambda(k_{b}\star_{l}v)))         \\
	     & = t_{a} \curvearrowleft (\lambda(k_{a}) \times \rho(t_{b} \curvearrowleft \lambda(k_{ab}))).
	\end{align*}
	By using equation \eqref{eqn:ilir} with $h^{\prime} = t_{b} \curvearrowleft \lambda(k_{ab})$ and $h=k_{a}$ we obtain
	\begin{align*}
		\lambda(k_{a}) \times \rho(t_{b} \curvearrowleft \lambda(k_{ab}))              & = (t_{b} \curvearrowleft \lambda(k_{ab})) \curvearrowright \lambda(k_{a} \star_{r} (t_{b} \curvearrowleft \lambda(k_{ab})).
	\end{align*}
	Coming back to the fixed point equation satisfied by $v$ and inserting the fixed point equation satisfied by $k_{b}$, we get also
	\begin{equation*}
		v = k_{a} \curvearrowleft \rho(t_{b} \curvearrowleft \lambda(k_{b} \star_{l} v)) = k_{a} \curvearrowleft \rho(t_{b} \curvearrowleft \lambda(k_{ab})).
	\end{equation*}
	This last equation implies
	\begin{align*}
		\lambda(k_{a}) \times \rho(t_{b} \curvearrowleft \lambda(k_{ab})) & = (t_{b} \curvearrowleft \lambda(k_{ab})) \curvearrowright (v \centerdot (t_{b} \curvearrowleft \lambda(k_{ab}))) \\
	            & = (t_{b} \curvearrowleft \lambda(k_{ab})) \curvearrowright \lambda(k_{ab}).
	\end{align*}
	Thus we obtain, for $v$ and $k_{ab}$,
	\begin{equation*}
		v = t_{a} \curvearrowleft \Big(\big(t_{b} \curvearrowleft \lambda(k_{ab})\big) \curvearrowright \lambda(k_{ab})\Big),
	\end{equation*}
	\begin{equation*}
		k_{ab} = \Big[t_{a} \curvearrowleft \Big( \big(t_{b} \curvearrowleft \lambda(k_{ab})) \curvearrowright \lambda(k_{ab})\Big)\Big] \centerdot \Big[t_{b} \curvearrowleft \lambda(k_{ab})\Big].
	\end{equation*}
	It follows from equations \eqref{eqn:moduleconjugation} and \eqref{eqn:modulelalg} that
	\begin{align*}
		k_{ab} &= \Big[t_{a} \curvearrowleft \Big((t_{b} \curvearrowright I) \times \lambda(k_{ab})\Big)\Big] \centerdot \Big[t_{b} \curvearrowleft \lambda(k_{ab})\Big]\\
		& = \Big[\Big(t_{a} \curvearrowleft (t_{b} \curvearrowright I)\Big)\curvearrowleft \lambda(k_{ab})\Big] \centerdot \Big[t_{b} \curvearrowleft \lambda(k_{ab})\Big] \\                   & = \Big((t_{a}\curvearrowleft (t_{b} \curvearrowright I)) \centerdot t_{b} \Big) \curvearrowleft \lambda(k_{ab}).
	\end{align*}
\end{proof}
If one chooses for the operad with multiplication $\mathcal{P}$ the endomorphism operad of $B$, with $m$ equal to the product in $B$, the above argument gives another proof of the twisted multiplicativity for the $T$-transform in operator-valued free probability.

\bibliographystyle{plain}
\bibliography{notesartdykema}

\begin{thebibliography}{10}

\bibitem{aguiar2004infinitesimal}
Marcelo Aguiar.
\newblock {\em Infinitesimal bialgebras, pre-{L}ie and dendriform algebras},
  pages 1--34.
\newblock Lecture Notes in Pure and Applied Mathematics. New York; Marcel
  Dekker; 1999, 2004.

\bibitem{chapoton2002theoreme}
Fr{\'e}d{\'e}ric Chapoton.
\newblock Un th{\'e}or{\`e}me de {C}artier--{M}ilnor--{M}oore--quillen pour les
  big{\`e}bres dendriformes et les alg{\`e}bres braces.
\newblock {\em Journal of Pure and Applied Algebra}, 168(1):1--18, 2002.

\bibitem{chapoton2007relating}
Fr{\'e}d{\'e}ric Chapoton and Muriel Livernet.
\newblock Relating two {H}opf algebras built from an operad.
\newblock {\em International Mathematics Research Notices},
  2007(9):rnm131--rnm131, 2007.

\bibitem{dykema2007multilinear}
Kenneth~J Dykema.
\newblock Multilinear function series and transforms in free probability
  theory.
\newblock {\em Advances in Mathematics}, 208(1):351--407, 2007.

\bibitem{ebrahimi2019operads}
Kurusch Ebrahimi-Fard, Lo\"{\i}c Foissy, Joachim Kock, and Fr{\'e}d{\'e}ric
  Patras.
\newblock Operads of (noncrossing) partitions, interacting bialgebras, and
  moment-cumulant relations.
\newblock {\em Advances in Mathematics}, 369:107170, 2020.

\bibitem{frabetti}
Alessandra Frabetti.
\newblock Groups of tree-expanded series.
\newblock {\em Journal of Algebra}, 319(1):377--413, 2008.

\bibitem{gerstenhaber1994homotopy}
Murray Gerstenhaber and Alexander~A Voronov.
\newblock Homotopy {G}-algebras and moduli space operad.
\newblock {\em Internat. Math. Research Notices}, (2):141--153, 1995.

\bibitem{gilliers2020shuffle}
Nicolas Gilliers.
\newblock A shuffle algebra point of view on operator-valued probability
  theory.
\newblock {\em arXiv preprint arXiv:2005.12049}, 2020.

\bibitem{kreweras1972partitions}
Germain Kreweras.
\newblock Sur les partitions noncroisées d'un cycle.
\newblock {\em Discrete Mathematics}, 1:333--350, 1972.

\bibitem{loday2012algebraic}
Jean-Louis Loday and Bruno Vallette.
\newblock {\em Algebraic operads}, volume 346 of {\em Grundlehren der
  mathematischen Wissenschaften}.
\newblock Springer Science \& Business Media, 2012.

\bibitem{mingo2017free}
James~A Mingo and Roland Speicher.
\newblock {\em Free probability and random matrices}, volume~35 of {\em Fields
  Institute Monographs}.
\newblock Springer, 2017.

\bibitem{nica2006lectures}
Alexandru Nica and Roland Speicher.
\newblock {\em Lectures on the combinatorics of free probability}, volume~13 of
  {\em LMS Lecture Note Series}.
\newblock Cambridge University Press, 2006.

\bibitem{ronco2001milnor}
Maria Ronco.
\newblock A {M}ilnor--{M}oore theorem for dendriform {H}opf algebras.
\newblock {\em Comptes Rendus de l'Acad{\'e}mie des Sciences-Series
  I-Mathematics}, 332(2):109--114, 2001.

\bibitem{speicher1994multiplicative}
Roland Speicher.
\newblock Multiplicative functions on the lattice of non-crossing partitions
  and free convolution.
\newblock {\em Mathematische Annalen}, 298(1):611--628, 1994.

\bibitem{voiculescu1986addition}
Dan Voiculescu.
\newblock Addition of certain non-commuting random variables.
\newblock {\em Journal of functional analysis}, 66(3):323--346, 1986.

\bibitem{voiculescu1987multiplication}
Dan Voiculescu.
\newblock Multiplication of certain non-commuting random variables.
\newblock {\em Journal of Operator Theory}, pages 223--235, 1987.

\end{thebibliography}
\end{document}